\documentclass[a4paper,reqno]{amsart}

\usepackage[english]{babel}

\usepackage[T1]{fontenc}
\usepackage{lmodern}

\usepackage[tbtags]{mathtools}

\usepackage{amsfonts}
\usepackage{amssymb}
\usepackage{mathrsfs}

\usepackage[backend=bibtex,style=alphabetic,giveninits=true,maxnames=99]{biblatex}
\usepackage{csquotes}

\usepackage{enumitem}

\usepackage{tikz}
\usepackage{tikz-cd}

\usepackage{xurl}
\usepackage[linktocpage]{hyperref}
\usepackage{amsthm}
\usepackage[capitalize]{cleveref}

\hypersetup{breaklinks=true} 

\NewBibliographyString{toappear}
\DefineBibliographyStrings{english}{%
  toappear = {to appear},
}
\renewbibmacro*{in:}{%
  \iffieldundef{pubstate}
    {}
    {\printfield{pubstate}%
     \setunit{\addspace}%
     \clearfield{pubstate}\bibstring{in}\addspace}%
  \ifentrytype{article}{}{\bibstring{in}\printunit{\intitlepunct}}}

\renewbibmacro*{volume+number+eid}{\printfield{volume}\setunit*{\addcomma}\printfield{number}\setunit{\bibeidpunct}\printfield{eid}}
\DeclareFieldFormat[article]{number}{ no. #1} 

\DeclareFieldFormat{eprint:arXiv}{%
  arXiv\addcolon
  \ifhyperref
    {\href{https://arxiv.org/abs/#1}{%
       \nolinkurl{#1}%
       \iffieldundef{eprintclass}
         {}
         {\addspace\texttt{\mkbibbrackets{\thefield{eprintclass}}}}}}
    {\nolinkurl{#1}%
     \iffieldundef{eprintclass}
       {}
       {\addspace\texttt{\mkbibbrackets{\thefield{eprintclass}}}}}}

\newtheorem{thm}{Theorem}
\newtheorem{introthm}{Theorem}
\newtheorem{lemma}[thm]{Lemma}
\newtheorem{prop}[thm]{Proposition}
\newtheorem{cor}[thm]{Corollary}

\theoremstyle{definition}
\newtheorem{dfn}[thm]{Definition}
\newtheorem{ex}[thm]{Example}

\theoremstyle{remark}
\newtheorem{rem}[thm]{Remark}

\numberwithin{thm}{section}						
\numberwithin{equation}{section}				

\crefname{thm}{Theorem}{Theorems}
\Crefname{thm}{Theorem}{Theorems}
\crefname{introthm}{Theorem}{Theorems}
\Crefname{introthm}{Theorem}{Theorems}
\crefname{lemma}{Lemma}{Lemmas}
\Crefname{lemma}{Lemma}{Lemmas}
\crefname{prop}{Proposition}{Propositions}
\Crefname{prop}{Proposition}{Propositions}
\crefname{cor}{Corollary}{Corollaries}
\Crefname{cor}{Corollary}{Corollaries}
\crefname{dfn}{Definition}{Definitions}
\Crefname{dfn}{Definition}{Definitions}
\crefname{ex}{Example}{Examples}
\Crefname{ex}{Example}{Examples}
\crefname{rem}{Remark}{Remarks}
\Crefname{rem}{Remark}{Remarks}
\crefname{equation}{}{}
\Crefname{equation}{}{}
\crefname{enumi}{}{}
\Crefname{enumi}{}{}
\crefname{tfaei}{}{}
\Crefname{tfaei}{}{}

\setlist[itemize]{itemsep=0mm}
\setlist[enumerate]{itemsep=0mm}
\setlist[enumerate,1]{label=\arabic*)}	

\newlist{tfae}{enumerate}{1}
\setlist[tfae]{itemsep=0mm}
\setlist[tfae]{label=\Roman*)}

\newcommand{\ints}{\mathbb{Z}}

\newcommand{\<}{\langle}
\renewcommand{\>}{\rangle}

\newcommand{\kk}{\Bbbk}

\renewcommand{\:}{\colon}

\newcommand{\hookto}{\hookrightarrow}							
\newcommand{\tto}{\rightarrow\mathrel{\mkern-14mu}\rightarrow}	

\newcommand{\longto}[1]{\xrightarrow{#1}}						

\newenvironment{smallpmatrix}{\left(\begin{smallmatrix}}{\end{smallmatrix}\right)}

\newcommand{\svdots}{\scalebox{.75}{$\vdots$}}
\newcommand{\sddots}{\scalebox{.75}{$\ddots$}}

\newcommand{\cat}[1]{\mathscr{#1}}				
\newcommand{\sh}[1]{\mathcal{#1}}				

\DeclareMathOperator{\id}{id}					

\DeclareMathOperator{\gldim}{gldim}             

\DeclareMathOperator{\Tot}{Tot}					

\DeclareMathOperator{\tria}{tria}				
\DeclareMathOperator{\thick}{thick}				
\DeclareMathOperator{\add}{add}					
\DeclareMathOperator{\extclos}{extclos}			
\DeclareMathOperator{\Kar}{Kar}					

\newcommand{\rmodfd}{\mathbf{mod}_{\mathrm{fd}}\textrm{-}}		
\newcommand{\rprojfg}{\mathbf{proj}_{\mathrm{fg}}\textrm{-}}	

\newcommand{\grrMod}{\mathbf{Mod}^{\ints}\textrm{-}}					

\newcommand{\heart}{\heartsuit}							
\newcommand{\coheart}{\raisebox{\depth}{\scalebox{1}[-1]{$\heartsuit$}}}	

\newcommand{\C}{\mathbf{C}}                     

\newcommand{\D}{\mathbf{D}}						
\newcommand{\Db}{\mathbf{D}^\mathrm{b}}			
\newcommand{\Dfd}{\mathbf{D}_{\mathrm{fd}}}		

\newcommand{\K}{\mathbf{K}}						
\newcommand{\Kb}{\mathbf{K}^\mathrm{b}}			

\newcommand{\perf}{\mathbf{perf}}				

\newcommand{\Hom}{\mathrm{Hom}}					
\newcommand{\End}{\mathrm{End}}					
\newcommand{\Ext}{\mathrm{Ext}}					

\newcommand{\RHom}{\mathrm{RHom}}				
\newcommand{\REnd}{\mathrm{REnd}}				

\newcommand{\Lotimes}{\otimes^\mathrm{L}}		

\newcommand{\proj}{\mathbb{P}}					

\newcommand{\HH}{\mathrm{HH}}					

\newcommand{\dg}{\mathrm{dg}}					
\newcommand{\op}{\mathrm{op}}					

\title[Weight structures and t-structures generated by a $\proj$-object]{Weight structures and t-structures on the triangulated category generated by a $\proj$-object}

\author{Lukas Bonfert}
\address{L.B.: Leibniz University Hannover, Institute for Algebra, Number Theory and Discrete Mathematics (IAZD), Hannover, Germany}
\email{bonfert@math.uni-hannover.de}

\date{October 15, 2026}

\keywords{triangulated categories, P-objects, weight structures, t-structures, silting theory, Koszul duality}

\subjclass{18G35, 18G80, 16E45}

\begin{document}
	\begin{abstract}
		$\proj$-objects were introduced by Huybrechts and Thomas as a generalization of spherical objects.
		We classify the weight structures and t-structures on the triangulated category generated by a $\proj^n[k]$-object in the sense of Krug.
		For $n=1$, this recovers the classification of weight structures and t-structures on the triangulated category generated by a spherical object due to Holm, J\o rgensen and Yang.
		We also compute the Koszul-dual model of the triangulated category generated by a $\proj^n[k]$-object.
	\end{abstract}
	\maketitle
	\section{Introduction}
		Weight structures and \textbf{t}-structures are standard tools for working with triangulated categories, with many applications in representation theory, algebraic geometry, and other areas.
		However, the classification of all weight structures and \textbf{t}-structures on a given triangulated category is usually not feasible.
		One interesting case where this has been achieved is the triangulated category generated by a spherical object.
		Spherical objects appear naturally in both algebraic geometry and representation theory, and the triangulated category they generate has been studied in many different settings, see the introduction of \cite{HolmJorgensenYang} for an overview.
		In \cite{HolmJorgensenYang} Holm, J\o rgensen and Yang classify the weight structures and \textbf{t}-structures on the triangulated category generated by a $k$-spherical object.
		More precisely, they show that for $k>0$ there is no non-trivial weight structure and a unique (up to shift) non-trivial \textbf{t}-structure, while for $k\leq 0$ there is no non-trivial \textbf{t}-structure and a unique (up to shift) non-trivial weight structure.
		
		A generalization of spherical objects are the $\proj$-objects introduced by Huybrechts and Thomas \cite{HuybrechtsThomas}.
		Following Krug \cite{Krug}, we call an object $P$ of a triangulated category $\cat{C}$ a \emph{$\proj^n[k]$-object} if its total endomorphism algebra $\End_\cat{C}^*(P)=\coprod_{r\in\ints} \Hom_\cat{C}(P,P[r])$ is isomorphic to $\kk[x]/(x^{n+1})$ with $|x|=k$ as a graded algebra, and if moreover $P$ is $nk$-Calabi--Yau.
		Hence a $\proj^1[k]$-object is the same as a $k$-spherical object.
		The main feature of $\proj$-objects and spherical objects is that they give rise to interesting autoequivalences known as $\proj$-twists \cite{HuybrechtsThomas}, respectively spherical twists \cite{SeidelThomas}.
		
		$\proj$-objects occur most naturally in algebraic geometry, see for instance \cite[Ex.~1.3]{HuybrechtsThomas} and the survey in \cite[\S 4.2]{Addington}.
		In particular, by \cite[Observation~1.2]{Krug} a compact K\"ahler manifold $X$ of dimension $2n$ is hyperk\"ahler if and only if $\sh{O}_X$ is a $\proj^n[2]$-object in $\D(X)$.
		Varieties whose structure sheaves are $\proj^n[k]$-objects with $k>2$ can be constructed from so-called strict Enriques varieties, see \cite[Thm.~1.3]{Krug}.
		In particular these exist for $n=2$ and $n=3$, with any even $k>2$.
		
		$\proj^n[k]$-objects also appear in representation theory, but so far they have rarely been studied from this perspective.
		By \cite[Thm.~B]{PerverseSheavesPaper} all indecomposable perverse sheaves on projective space are $\proj^n[2]$-like objects (i.e.~their total endomorphism algebras are $\kk[x]/(x^{n+1})$, but most of these objects are not Calabi--Yau).
		Furthermore, it is easy to see that the free dg module of rank~$1$ over the dg algebra $A=\kk[x]/(x^{n+1})$ with $|x|=k$ and trivial differential is a $\proj^n[k]$-object in $\perf(A)$.
		This is actually the ``universal'' $\proj^n[k]$-object: as a consequence of the formality result from \cite{HocheneggerKrugFormality}, if $P$ is a $\proj^n[k]$ in an algebraic triangulated category $\cat{C}$ then $\Hom_\cat{C}(P,-)\: \thick_\cat{C}(P)\to\perf(A)$ is an equivalence taking $P$ to $A$.
		In particular, the triangulated category generated by a $\proj^n[k]$-object is independent of the ambient triangulated category, see \cref{triangcatpobj} for details.
		
		Our main result classifies the weight structures and \textbf{t}-structures on the triangulated category generated by a $\proj^n[k]$-object.
		\begin{introthm}[\cref{classificationthm}]
			\label{classificationthmintro}
			Let $\cat{T}$ be the triangulated category generated by a $\proj^n[k]$-object.
			\begin{enumerate}
				\item For $n=0$, $\cat{T}$ admits a standard weight structure and a standard \textbf{t}-structure.
					Any non-trivial \textbf{t}-structure or weight structure on $\cat{T}$ is a shift of these.
				\item For $n>0$ and $k>0$, $\cat{T}$ admits a standard \textbf{t}-structure, and any non-trivial \textbf{t}-structure on $\cat{T}$ is a shift of this.
					$\cat{T}$ does not admit a non-trivial weight structure.
				\item For $n>0$ and $k\leq 0$, $\cat{T}$ admits a standard weight structure, and any non-trivial weight structure on $\cat{T}$ is a shift of this.
					$\cat{T}$ does not admit a non-trivial \textbf{t}-structure.
			\end{enumerate}
		\end{introthm}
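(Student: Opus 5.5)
We would work throughout with the model $\cat{T}\simeq\perf(A)$, $A=\kk[x]/(x^{n+1})$ with $|x|=k$ and zero differential, as the introduction explains (via the formality result of \cite{HocheneggerKrugFormality}). We would split into the three cases of \cref{classificationthmintro}. The tools are the standard correspondences. Bounded weight structures on $\perf(A)$ with $A$ classically generating correspond to silting subcategories. Bounded t-structures with length heart in a Hom-finite setting correspond to simple-minded collections. The Koszul dual model mentioned in the abstract gives $\perf(A)\simeq\Dfd(A^!)$ with $A^!=\kk[y]$, $|y|=1-k$, which converts one classification into the other.

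For $n=0$ we have $A=\kk$ and $\cat{T}\simeq\Db(\rmodfd\kk)$ is semisimple; every object is a finite sum of shifts of $\kk$. The aisle of a t-structure, or the coaisle of a weight structure, is closed under summands and (co)suspension. Hence it is determined by the set of degrees $r$ with $\kk[r]$ in it, which must be an up-set or a down-set of $\ints$. That leaves only the trivial ones and shifts of the standard one.

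For $n>0$ we first identify the indecomposable objects of $\perf(A)$, or at least the relevant ones. These are the dg modules $A/x^{i}$ for $1\le i\le n+1$ up to shift, together with certain string-like objects. We would determine them via the Koszul dual $\kk[y]$, where $\perf(A)$ corresponds to finite-dimensional dg modules over a graded polynomial ring in one variable. We would then compute $\Hom(X,Y[r])$ between them in terms of $k$.

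For $k\le 0$ the algebra $A$ is non-positive, so $\add A$ is silting and gives the standard weight structure. To show uniqueness, we would show that any non-trivial weight structure has a heart that is a silting subcategory generating $\cat{T}$. We would argue that every silting object is, up to shift, $A$ itself: an indecomposable presilting object $X$ must satisfy $\Hom(X,X[r])=0$ for $r>0$. Computing $\End^*(X)$ for each indecomposable shows that only shifts of $A$ qualify; the nonzero degree $nk$ or $-k$ parts kill the others. Silting mutation is ruled out because $\cat{T}$ has rank $1$ in $K_0$, so a silting object has a single indecomposable summand.

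For the nonexistence of t-structures, a non-trivial t-structure has a nonzero heart object $H$. Using Calabi--Yau duality $\Hom(X,Y)\cong D\Hom(Y,X[nk])$ with $nk\le 0$, we would derive a contradiction. Concretely, $\Hom(H,H[nk])\neq 0$ with $nk<0$ is forbidden in a heart, and the case $k=0$ needs a separate argument using the nilpotent degree-$0$ endomorphism $x$ and truncation triangles.

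For $k>0$ we would run the dual argument. The standard t-structure comes from Koszul duality: $\kk[y]$ with $|y|=1-k\le 0$ is non-positive, so $\Dfd(\kk[y])$ carries the standard t-structure with heart $\rmodfd\kk$, with simple object $\kk=A/x$. Uniqueness reduces to showing that every simple-minded collection is a shift of $\{\kk\}$, again using $\Hom$ computations. Nonexistence of weight structures follows as before: a silting subcategory would be generated by $A$ up to shift, but $\Hom(A,A[k])\neq 0$ with $k>0$ violates presilting.

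The main obstacle is the reduction from arbitrary t-structures and weight structures to bounded ones with length hearts or silting hearts. This requires showing that every non-trivial structure on $\cat{T}$ is bounded. We would attempt it by truncating $A$: the truncation triangle $T_{\le}A\to A\to T_{>}A$ yields a decomposition, and indecomposability of $A$ together with the classification of objects forces one term to vanish or to be a shift of $A$. That in turn forces the structure to be a shift of the standard one. Since $\thick(A)=\cat{T}$, an aisle containing $A$ contains all of $\cat{T}$ from some shift on.
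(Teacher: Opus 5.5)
Your proposal has two genuine gaps. The first is that your $n>0$ arguments rest on listing the indecomposables of $\perf(A)$ and computing $\End^*(X)$ for each of them, and no such list is available.

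\begin{itemize}
\item The Koszul dual of $A=\kk[x]/(x^{n+1})$ is $\kk[y]$ only for $n=1$, $k\neq 1$. For $n>1$ it has cohomology $\kk[s,t]/(s^2)$ and is not formal, and for $n=k=1$ it is $\kk[\![s]\!]$.
\item $\perf(A)$ is wild for $n>1$.
\item The dg modules $A/x^i$ with $1\le i\le n$ (including $A/x=\kk$) do not lie in $\perf(A)$: the space $\bigoplus_m\Hom(A/x^i,\kk[m])$ is infinite-dimensional, whereas it is finite-dimensional on $\perf(A)$.
\item For $k=1$ the standard heart is not $\rmodfd\kk$, since it contains non-split self-extensions of $P$.
\end{itemize}

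What the paper uses instead is a non-vanishing lemma that needs no classification.
\begin{itemize}
\item For $k\le 0$, a perfect dg module $X$ without contractible summands satisfies $d(X)\subseteq xX$ (radical differentials when $k=0$). Suppose $X$ spans the weights from $r<0$ to $0$. Project onto a summand $P[-r]$ at one end of $X$ and include it as a summand $P[-r]$ at the other end of $X[-r]$. This dg map $X\to X[-r]$ cannot be null-homotopic, because the image of any $dh+hd$ lies in $xX$. Hence $\Hom(X,X[-r])\neq 0$ with $-r>0$.
\item For $k>0$, if $X\in\cat{T}^{\ge 0}\cap\cat{T}^{\le r}$ genuinely spans standard degrees $0$ to $r>0$, then $\Hom(X,X[-r])\cong\Hom(\tau_{\ge r}X,\tau_{\le r}(X[-r]))\neq 0$. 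This holds because the standard heart has finite length with unique simple $P$.
\end{itemize}
This places every indecomposable of a (co)heart in a shift of the standard (co)heart, without knowing what the indecomposables are. For $k=0$, $0$-Calabi--Yau duality converts positive into negative self-Homs, so the lemma also applies to hearts.

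The second gap is the reduction to bounded structures, which is neither proved nor needed. Your truncation argument does not work: a triangle $T_{\le}A\to A\to T_{>}A\to T_{\le}A[1]$ with indecomposable middle term need not split, so indecomposability of $A$ gives nothing on its own. The missing idea is a Calabi--Yau splitting in the degenerate case. If the coheart of $w$ is zero, both halves are invariant under all shifts. Hence $\Hom(w_{\le 0}P,w_{>0}P[1])\cong\Hom(w_{>0}P[1],w_{\le 0}P[nk])^\vee=0$, so the decomposition of $P$ splits, $P$ lies in one half, and $w$ is trivial; the same argument works for \textbf{t}-structures. Your claim that a non-trivial \textbf{t}-structure has a non-zero heart object, used for $k\le 0$, requires exactly this.

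If the (co)heart is non-zero, the lemma above puts a shift of $P$ into it. For $k=1$ this needs an extra step that extracts $P$ from a uniserial heart object. For \textbf{t}-structures with $k=0$ one instead gets a contradiction: the heart is semisimple by the $0$-Calabi--Yau property, so it would be $\add(P)$, which is not abelian. Once a shift of $P$ lies in the (co)heart, both halves of the given structure contain the standard ones and therefore coincide with them.

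Finally, your non-existence argument for weight structures when $k>0$ assumes that a silting subcategory must be $\add(A)$ up to shift, which is unfounded. The actual reason is that $\End(X)\cong\Hom(X,X[nk])^\vee=0$ for every coheart object $X$ when $nk>0$, combined with the degenerate case. Your $n=0$ argument, and the Calabi--Yau obstruction for \textbf{t}-structures with $k<0$ once a non-zero heart object is available, are fine.
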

		For $n=0$ the classification is well-known since in this case $\cat{T}$ is just $\Db(\rmodfd\kk)$.
		For $n=1$, \cref{classificationthmintro} recovers \cite[Thm.~A]{HolmJorgensenYang}.
		
		The proof of \cref{classificationthmintro} occupies most of \cref{classification}.
		Standard results from silting theory provide the standard weight structure (for $k\leq 0$) respectively \textbf{t}-structure (for $k>0$), see \cref{standardstructs}.
		The main idea for the classification is to show that if $X\in\cat{T}$ does not lie in the standard (co)heart up to shift, then $\Hom_\cat{T}(X,X[r])\neq 0$ for some $r>0$ (if $k\leq 0$) respectively $r<0$ (if $k>0$).
		From this it follows that any non-zero (co)heart must already contain a shift of $P$.
		The required morphisms are constructed separately for $k<0$, $k=0$, and $k>0$, although the arguments for $k<0$ and $k=0$ are very similar.
		For $k>0$ our argument is ``model-free'' and only uses the standard \textbf{t}-structure, while for $k\leq 0$ we explicitly use that $\cat{T}$ is equivalent to $\perf(A)$ where $A=\kk[x]/(x^{n+1})$ with $|x|=k$ and trivial differential.
		Finally, \textbf{t}-structures and weight structures with zero (co)heart are easily seen to be trivial, see \cref{zeroheart}.
		The non-existence of \textbf{t}-structures respectively weight structures follows from this and a well-known obstruction in terms of the the Calabi--Yau dimension, see \cref{nonexistence}.
		
		The fundamental idea of the proof (i.e.~constructing morphisms that obstruct objects other than $P$ from lying in (co)hearts) is the same as in \cite{HolmJorgensenYang}.
		However, these morphisms are obtained differently.
		The approach in \cite{HolmJorgensenYang} is to explicitly describe the entire triangulated category $\cat{T}$ by writing down the Auslander--Reiten quiver of $\cat{T}$.
		The non-zero Hom spaces are computed via long exact sequences, and described combinatorially in terms of the Auslander--Reiten quiver.
		The classification of weight structures and \textbf{t}-structures can then essentially be read off.
		Instead of the ``canonical'' model $\cat{T}=\perf(\kk[x]/(x^2))$ with $|x|=k$, these arguments use the Koszul-dual model: $\cat{T}$ is equivalent to $\thick(\kk)\subseteq\D(\kk[t])$, where $|t|=1-k$.
		This allows to explicitly list all indecomposable objects, as $\kk[t]$ is graded hereditary.
		
		This approach does not work in general, as $\perf(\kk[x]/(x^{n+1})$ is wild for $n>1$.
		Nevertheless some things can be said about the Auslander--Reiten quiver, at least for $k>1$: by \cite[Thm.~4.2]{JorgensenARQuiverPoincare} every component is of the shape $\ints A_\infty$ (just as for $n=1$), and by \cite[Thm.~1]{Schmidt} there are infinitely many connected components.
		In particular, the Auslander--Reiten quiver again has the same shape as that of $\Kb(\rprojfg A)$ for the ungraded algebra $A$, cf.~\cite{Wheeler} and \cite{Webb}.
		
		In \cref{koszulduality} we explicitly determine the Koszul-dual model of $\cat{T}$ for $n>1$, which is provided by the general theory of dg Koszul duality from \cite{KellerDerivingDGCats}.
		In general, the dg Koszul dual of an augmented dg algebra $A$ is defined as $A^{!,\dg}=\RHom_A(\kk,\kk)$.
		Our second main result is the following explicit description of $A^{!,\dg}$ for the dg algebra describing the triangulated category generated by a $\proj^n[k]$-object.
		\begin{introthm}[\cref{koszuldual}]
			\label{koszuldualintro}
			Let $A=\kk[x]/(x^{n+1})$ with $|x|=k$ and $n>1$, viewed as a dg algebra with trivial differential.
			\begin{enumerate}
				\item $A^{!,\dg}$ is quasi-isomorphic to the dg algebra
					\begin{equation*}
						\kk\<s,h_0,\dots,h_{n-2}\>/(h_ih_j,s^2h_i-h_is^2, h_{i+1}sh_j-h_ish_{j+1}, s^2h_i-h_{n-2}sh_{i+1})
					\end{equation*}
					with the grading given by $|s|=1-k$ and $|h_i|=1-(n-i)k$, and the differential defined on generators by $d(s)=0$, $d(h_{n-2})=s^2$ and $d(h_i)=sh_{i+1}+h_{i+1}s$ for $i<n-2$.
				\item We have $H^*(A^{!,\dg})\cong\kk[s,t]/(s^2)$, where $t=sh_0+h_0s$ is of degree $|t|=2-(n+1)k$.
				\item $A^{!,\dg}$ is not formal.
			\end{enumerate}
		\end{introthm}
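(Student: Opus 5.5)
Throughout, write $A=\kk[x]/(x^{n+1})$ with $|x|=k$ and $n>1$. The plan is to build an explicit semi-free resolution of the augmentation module $\kk$ over $A$. From it I will compute $\REnd_A(\kk)$ twice: once as a dg algebra, to read off the presentation, and once via cohomology, to get part 2). Non-formality comes last.

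\textbf{Step 1: resolution and cohomology.} Since $A$ is monomial with trivial differential, the minimal graded free resolution of $\kk$ is 2-periodic:
\begin{equation*}
\cdots\to A[\ast]\xrightarrow{x^n}A[\ast]\xrightarrow{x}A\to\kk .
\end{equation*}
Totalizing it gives a semi-free dg module $Q\simeq\kk$. Applying $\Hom_A(Q,\kk)$ gives zero differentials, so $\Ext_A^*(\kk,\kk)$ is one-dimensional in each homological degree $m\ge0$. The Yoneda product makes it $\kk[s,t]/(s^2)$. Here $s$ is dual to the first syzygy step; its internal plus cohomological degree is $1-k$. The class $t$ comes from the periodicity step of length two, of internal degree $(n+1)k$, so $|t|=2-(n+1)k$. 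The relation $s^2=0$ holds because the composite of $x$ and $x$ is not a syzygy map when $n>1$. I will check this by lifting $s$ to a chain map $Q\to Q$: its square is null-homotopic, whereas for $n=1$ it would not be. This gives part 2), provided I also identify $t$ with the Massey-type element $sh_0+h_0s$ once the model from part 1) is in place.

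\textbf{Step 2: the explicit model.} I will use the bar construction, or equivalently an $A_\infty$-minimal model. The Koszul dual of $A$ is the dg algebra $\Omega(A^{\vee})$, the cobar construction on the coalgebra dual to $A$. It is free on the desuspended duals of $x,x^2,\dots,x^n$, with differential dual to multiplication. This is large, so I will cut it down. Set $s$ dual to $x$ and let $h_i$ correspond to $x^{n-i}$; then the degrees $|h_i|=1-(n-i)k$ match. Set $t_{n+1}$, the class of degree $2-(n+1)k$, aside for now. In the cobar construction, $d$ of the generator dual to $x^m$ is $\sum_{a+b=m}$ of products of the generators dual to $x^a$ and $x^b$. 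I will then define a dg map from the stated quotient algebra $B$ to a suitable quotient or model of the cobar construction, and check that it is a quasi-isomorphism.

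A cleaner route is to build the comparison in the other direction. I will let $B$ act on the resolution $Q$ by explicit chain maps: $s$ is the lift of the degree-one shift, and $h_i$ are the homotopies witnessing $x^{i+1}$-type relations. Verifying the defining relations amounts to verifying identities among these explicit endomorphisms of $Q$. This gives a dg map $B\to\End_A(Q)$. It then remains to compute $H^*(B)$ directly and see that it is $\kk[s,t]/(s^2)$, with the map inducing an isomorphism. To compute $H^*(B)$, I will find a $\kk$-basis of $B$ from the relations, made of words alternating $s$ and $h$'s with the rewriting rules, and filter $B$ by word length in the $h_i$'s.

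\textbf{Step 3: non-formality.} Suppose $B$ were formal. Then $\thick(\kk)\subseteq\D(A)$ would be equivalent to $\perf$ of the graded algebra $\kk[s,t]/(s^2)$. Dualizing back, that would make $A$ quasi-isomorphic to the Koszul dual of $\kk[s,t]/(s^2)$. The latter is $\kk[x]/(x^2)\otimes\kk[u]$ up to completion, which has the wrong cohomology. The alternative is to show directly that the Massey product $\langle s,s,\dots,s\rangle$ of length $n+1$ equals $t$, which is nonzero. Since $s^2=0$, this higher product is an invariant of the $A_\infty$-structure, so $B$ is not formal. The main obstacle I expect is Step 2: checking that the chosen relations give exactly the right cohomology and no extra classes. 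This requires a careful normal-form and basis argument for $B$.
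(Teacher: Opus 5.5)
For parts 1) and 2) you follow the paper's route. The paper writes down explicit endomorphisms $s$ and $h_i$ of the $2$-periodic resolution; its $h_i$ is $x^i$ from each position $-2m$ to position $-2m+1$ ($m\geq 1$) and $0$ on all other components. It checks the relations by hand and computes $H^*(E)$ from a Gr\"obner basis, as you plan to do.

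The one structural difference is the grading. The paper keeps the internal (Adams) grading, works in $\End^*_{\C_\dg(\grrMod A)}(P_\kk)$, and totalizes only at the end via \cref{dgvsgradedext}. That step uses diagonal-finiteness of $P_\kk$ (\cref{projresdiagfin}, which needs $n>1$). Totalizing first does not let you skip this. $H^*(\End_A(Q))$ is degreewise a product of bigraded Ext pieces, and it matches the direct sum $H^*(B)$ only because finitely many pieces land in each total degree; you should say so. Keeping the internal grading also makes your normal-form computation finite. The differential preserves it, and for $k\neq 0$ each internal-degree piece of $E$ is a bounded complex of finite-dimensional spaces.

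For part 3) you genuinely diverge, and your version is the one that works for all $n>1$. The paper uses the triple product $\langle s,s,s\rangle$ with $u=v=h_{n-2}$. It only checks that the representing element $sh_{n-2}+h_{n-2}s$ is non-zero as an element. For $n\geq 3$ this element equals $d(h_{n-3})$, so $\langle s,s,s\rangle$ contains $0$; the argument is conclusive only for $n=2$, where the element is $t$.

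The first non-vanishing product is the length-$(n+1)$ one you propose. The elements $a_1=s$ and $a_m=h_{n-m}$ for $2\leq m\leq n$ form a defining system, because all $h_ih_j$ vanish. Its value is $\sum_{p+q=n+1,\,p,q\geq 1}a_pa_q=sh_0+h_0s=t$.

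However, ``since $s^2=0$'' is not the whole justification: you need $0\notin\langle s,\dots,s\rangle$. In the internally graded model with $k\neq 0$, bidegrees force the defining system. But formality of $A^{!,\dg}$ is a singly-graded statement, and after totalizing further elements of the relevant degrees appear. An indeterminacy argument is therefore still owed.

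Your first argument avoids this entirely and gives the cleanest complete proof. By \cref{koszulmodel} (Fushimi's theorem, which applies for every $k$ once $n>1$), $\Dfd((A^{!,\dg})^{\op})$ is equivalent to $\perf(A)$. In $\perf(A)$ all total Hom spaces $\bigoplus_r\Hom(X,Y[r])$ are finite-dimensional. A formality quasi-isomorphism $A^{!,\dg}\simeq H=\kk[s,t]/(s^2)$ would transfer this property to $\Dfd(H^{\op})$. There, however, the augmentation module has self-extensions given degreewise by products of the pieces of $\kk[\sigma]\otimes\Lambda(\tau)$, which is infinite-dimensional. This is the ``wrong cohomology'' you point to. Phrased this way, it needs neither the identification $A\simeq(A^{!,\dg})^{!,\dg}$ nor any check that the formality quasi-isomorphism preserves the augmentation.
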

		The computation of the dg Koszul dual is rather technical.
		The crucial \cref{dgvsgradedext} is a variant of \cite[Lemma~5.3]{KalckYangGentle} for diagonal-finite unbounded complexes.
		In our setting, this applies to the projective resolution $P_\kk$ of $\kk$ as graded $A$-module unless $n=1$ and $k=1$.
		From this it follows that $A^{!,\dg}$ is isomorphic to the total complex of the $\ints$-graded dg algebra $\End_{\C_\dg(\grrMod A)}^*(P_\kk)$, unless $n=1$ and $k=1$.
		
		For the proof of \cref{koszuldualintro} we work with a $\ints$-graded version of the claimed dg Koszul dual.
		A straightforward but tedious computation shows that its cohomology is $\kk[s,t]/(s^2)$ with $t=sh_0+h_0s$.
		On the other hand, it is easy to write down generators for $H^*(\End_{\C_\dg(\grrMod A)}^*(P_\kk))=\Ext_{\grrMod A}^{*,*}(\kk,\kk)\cong\kk[s,t]/(s^2)$, see \cref{extalgebra}.
		By sending the generators $s$ and $h_i$ to explicit chain maps and homotopies we thus obtain the desired quasi-isomorphism of $\ints$-graded dg algebras, and \cref{koszuldualintro} follows by taking the total complex.
		To see that $A^{!,\dg}$ is not formal we show that certain triple Massey products do not vanish.
		
		For $n=1$ and $k\neq 1$, similar but easier arguments also yield $A^{!,\dg}=\kk[s]$ with $|s|=1-k$, see \cref{koszuldualn1}.
		For $n=1$ and $k=1$, we compute $A^{!,\dg}$ directly to obtain $A^{!,\dg}=\kk[\![t]\!]$ with $|t|=0$, see \cref{koszuldualn1k1}.
		The results for $n=1$ are implicitly stated in \cite[\S 2]{KellerYangZhou} and presumably well-known to the experts.
		
		In \cref{pvdfinite} we show that $\kk[x]/(x^{n+1})$ with $|x|=k>0$ is pvd-finite in the sense of \cite{FushimiDgKoszul} unless $n=1$ and $k=1$.
		From this it follows that the dg Koszul dual provides an alternative model for the triangulated category generated by a $\proj^n[k]$-object, see \cref{koszulmodel}.
		\subsection{Acknowledgements}
			I would like to thank Andreas Hochenegger, Thorsten Holm, Peter J\o rgensen and Andreas Krug for helpful discussions.
	\section{The triangulated category generated by a \texorpdfstring{$\proj^n[k]$}{Pn[k]}-object}
		\subsection{\texorpdfstring{$\proj^n[k]$}{Pn[k]}-objects}
			Let $\cat{C}$ be a $\kk$-linear triangulated category.
			For $X\in\cat{C}$ we denote its total endomorphism algebra by $\End_\cat{C}^*(X)=\coprod_{n\in\ints} \Hom_\cat{C}(X,X[n])$.
			This is $\ints$-graded by $|f|=n$ for $f\: X\to X[n]$.
			
			We begin by recalling the definition of $\proj^n$-objects introduced in \cite{HuybrechtsThomas}, using the slight generalization to $\proj^n[k]$-objects from \cite{Krug}.
			Recall that an object $X\in\cat{C}$ is called $d$-Calabi--Yau if $\Hom_\cat{C}(X,-)\cong\Hom_\cat{C}(-,X[d])^\vee$, where $(-)^\vee=\Hom_\kk(-,\kk)$.
			\begin{dfn}
				Let $n\geq 0$ and $k\in\ints$.
				\begin{itemize}
					\item An object $P\in\cat{C}$ is \emph{$\proj^n[k]$-like} if there is an isomorphism of graded algebras $\End_\cat{C}^*(P)\cong\kk[x]/(x^{n+1})$ with $|x|=k$.
					\item $P$ is a \emph{$\proj^n[k]$-object} if moreover $P$ is $nk$-Calabi--Yau.
				\end{itemize}
			\end{dfn}
			\begin{ex}
				The following are well-known special cases:
				\begin{enumerate}
					\item For $n=0$, the value of $k$ is undefined.
						A $\proj^0$-like object is the same as an exceptional object.
					\item For $n=1$, a $\proj^1[k]$-like object is the same as a $k$-spherelike object in the sense of \cite{HocheneggerKalckPloog}.
						A $\proj^1[k]$-object is the same as a $k$-spherical object in the sense of \cite{SeidelThomas}.
					\item For $k=2$ we recover the original definition of $\proj^n$-objects from \cite{HuybrechtsThomas}.
				\end{enumerate}
			\end{ex}
		\subsection{The triangulated category}
			\label{triangcatpobj}
			In this subsection we show that the triangulated category generated by a $\proj^n[k]$-object is independent of the ambient triangulated category.
			For $k>0$ this is due to Hochenegger and Krug \cite{HocheneggerKrugFormality}, and the general proof is the same.
			
			A graded algebra $A$ is called \emph{intrinsically formal} if any dg algebra $B$ with $H^*(B)\cong A$ as graded algebras is quasi-isomorphic to $A$.
			
			The following proposition is essentially \cite[Lemma~2.8]{HocheneggerKrugFormality}.
			We include the proof for completeness.
			\begin{prop}[Hochenegger--Krug]
				\label{intrinsicformality}
				For $n\geq 0$ and $k\in\ints$, the graded algebra $\kk[x]/(x^{n+1})$ with $|x|=k$ is intrinsically formal.
			\end{prop}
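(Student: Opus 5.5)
We use Kadeishvili's theory of $A_\infty$-minimal models together with the standard criterion for intrinsic formality via Hochschild cohomology. If $B$ is a dg algebra with $H^*(B)\cong A=\kk[x]/(x^{n+1})$, $|x|=k$, then $B$ is quasi-isomorphic to a minimal $A_\infty$-structure $(A,m_2,m_3,\dots)$ with $m_2$ the given product. The classical obstruction argument (Kadeishvili; cf.\ Seidel--Thomas and Hochenegger--Krug) shows that this structure can be made formal by an $A_\infty$-isomorphism provided
\begin{equation*}
\HH^{p,2-p}(A,A)=0\quad\text{for all }p\geq 3,
\end{equation*}
where the first index is the Hochschild (tensor) degree and the second the internal degree. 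Indeed, inductively the first non-vanishing higher product $m_p$ is a Hochschild cocycle of bidegree $(p,2-p)$. If its class vanishes, it can be removed by an $A_\infty$-isomorphism of the form $\id+f_{p-1}$ without changing $m_2,\dots,m_{p-1}$. Passing to the limit is harmless, since the composite of infinitely many such isomorphisms is well defined degreewise.

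So the main task is to compute $\HH^{*,*}(A,A)$ for the graded algebra $A$. For $n=0$ we have $A=\kk$, which is separable, so everything vanishes. For $n\geq 1$ we use the standard $2$-periodic bimodule resolution of a truncated polynomial algebra:
\begin{equation*}
\cdots\to A^e(-(n+1)k)\xrightarrow{\ x\otimes 1-1\otimes x\ } A^e(-(n+1)k)\xrightarrow{\ \sum_{i}x^i\otimes x^{n-i}\ } A^e(-k)\xrightarrow{\ x\otimes 1-1\otimes x\ } A^e\to A.
\end{equation*}
Here the internal shifts are forced by the degrees of the maps: the generator in homological degree $2j$ sits in internal degree $j(n+1)k$, and the one in degree $2j+1$ in internal degree $j(n+1)k+k$. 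Applying $\Hom_{A^e}(-,A)$ gives a complex whose terms are copies of $A$ shifted by these amounts, with differentials alternating between $0$ and multiplication by $(n+1)x^n$. Mind the Koszul signs in the graded setting: for $k$ odd, the map $x\otimes 1-1\otimes x$ may become $x\otimes 1+1\otimes x$, which changes which differentials vanish. In any case, $\HH^{p,*}(A,A)$ is a subquotient of $A$ shifted by $-\lfloor p/2\rfloor (n+1)k$ (plus $-k$ if $p$ is odd). Its nonzero internal degrees therefore lie in $\{ik-\lfloor p/2\rfloor(n+1)k-\epsilon k : 0\le i\le n\}$, with $\epsilon\in\{0,1\}$ the parity of $p$. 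All of these are multiples of $k$.

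It remains to check that $2-p$ is never such a degree for $p\geq 3$. This is a degree-counting argument, and it is the main obstacle, because the small values of $k$ have to be handled separately.

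\emph{Case $k=0$.} Here $A$ is concentrated in degree $0$, while $2-p\neq 0$ for $p\geq 3$, so there is nothing to obstruct. More directly, any minimal $A_\infty$-structure has $|m_p|=2-p\neq0$ and must therefore vanish on an algebra concentrated in degree $0$.

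\emph{Case $|k|\geq 2$.} Write the candidate degree as $k\cdot c$ with $c=i-\lfloor p/2\rfloor(n+1)-\epsilon$, where $0\le i\le n$. We need $kc=2-p$. Since $c\le n-\lfloor p/2\rfloor(n+1)\le -1$ for $p\ge 2$, we get $|c|\ge \lfloor p/2\rfloor$ roughly. Hence $|kc|\geq 2\lfloor p/2\rfloor$. For $k>0$, both sides are negative, and comparing $|kc|$ with $p-2$ gives a contradiction. For $k<0$, $kc>0$ while $2-p<0$, so there is no solution at all.

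\emph{Case $|k|=1$.} This needs the finer bound. We have $c\le n-\lfloor p/2\rfloor(n+1)-\epsilon$. For $k=1$ we need $c=2-p$, but $n-\lfloor p/2\rfloor(n+1)-\epsilon<2-p$ once $n\ge1$ and $p\ge3$. Check this for even and odd $p$ separately, e.g.\ $p=3$ gives $n-(n+1)-1=-2<-1$. For $k=-1$, $c=p-2>0$, which is impossible since $c<0$.

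Once this vanishing is established, the obstruction argument shows that $B$ is $A_\infty$-quasi-isomorphic to $A$, hence quasi-isomorphic to $A$ as a dg algebra, which proves \cref{intrinsicformality}.
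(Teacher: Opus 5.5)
Your proposal is correct and follows the paper's proof. Kadeishvili's criterion reduces the statement to $\HH^{p,2-p}(A,A)=0$ for $p\geq 3$, and you obtain this by the same degree count on the terms of the $2$-periodic bimodule resolution; you only group the cases as $k=0$, $|k|\geq 2$, $|k|=1$ instead of $k>1$, $k=1$, $k=0$, $k<0$, and you handle $n=0$ via separability of $\kk$ rather than by truncation. One harmless slip: in your displayed resolution the term in homological degree $3$ should be $A^e(-(n+2)k)$, which is what your own description of the generator degrees, and hence your degree count, correctly uses.
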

			\begin{proof}
				For $n=0$ this is obvious: a dg algebra $B$ with $H^*(B)\cong\kk$ (concentrated in degree~$0$) is quasi-isomorphic to $\kk$ by truncation first to degrees $\leq 0$, and then to degree~$0$.

				For $n>0$, we follow the proof from \cite[Lemma~2.8]{HocheneggerKrugFormality}, which works for any $k$ but with slightly different estimates.
				Let $A=\kk[t]/(x^{n+1})$ with $|x|=k$.
				By Kadeishvili's criterion \cite[Cor.~4]{Kadeishvili} (see also \cite[Thm.~4.7]{SeidelThomas}), to prove intrinsic formality of $A$ it suffices to show that $\HH^{q,2-q}(A,A)=0$ for all $q>2$.
				
				Write $A^e=A\otimes_\kk A^\op$ for the enveloping algebra of $A$.
				A free resolution of $A$ as graded $A^e$-module (i.e.~of $A$ as graded $A$-$A$-bimodule) is given by
				\begin{equation*}
					F=(\dots\to A^e\<-2(n+1)k\>\longto{v}A^e\<-(n+2)k\>\longto{u}A^e\<-(n+1)k\>\longto{v}A^e\<-k\>\longto{u} A^e),
				\end{equation*}
				where $u=x\otimes 1-1\otimes x$ and $v=\sum_{i=1}^n x^i\otimes x^{n-i}$, cf.~\cite[Exercise~9.1.4]{Weibel}.
				Explicitly, we have for $q\geq 0$
				\begin{equation*}
					F^{-q}=\begin{cases}
						A^e\<-(n+1)k\frac{q}{2}\>&\text{if $q$ even},\\
						A^e\<-(n+1)k\frac{q-1}{2}-k\>&\text{if $q$ odd}.
					\end{cases}
				\end{equation*}
				By definition we have for $q\geq 0$
				\begin{equation*}
					\HH^{q,2-q}(A,A)\cong H^q(\Hom_{\grrMod A^e}(F,A\<2-q\>)),
				\end{equation*}
				which is a subquotient of $\Hom_{\grrMod A^e}(F^{-q},A\<2-q\>)$.
				Hence to apply Kadeishvili's criterion it suffices to show that $\Hom_{\grrMod A^e}(F^{-q},A\<2-q\>)=0$ for all $q>2$.
				This follows ultimately by degree reasons, but with several case distinctions.
				
				For $q=2i$ with $i>1$ we have
				\begin{equation*}
					\begin{split}
						\Hom_{\grrMod A^e}(F^{-q},A\<2-q\>)&=\Hom_{\grrMod A^e}(A^e\<-(n+1)ki\>,A\<2-2i\>)\\
						&\cong A^{2-2i+(n+1)ki}.
					\end{split}
				\end{equation*}
				Observe that $A$ is concentrated in degrees $0,k,\dots,nk$, from which it follows that $A^{2-2i+(n+1)ki}=0$ for all $k\in\ints$ and $i>1$:
				\begin{itemize}
					\item If $k>1$, then
						\begin{equation*}
							2-2i+(n+1)ki=2+(k-2)i+nki>2+(k-2)i+nk>nk.
						\end{equation*}
					\item If $k=1$, then
						\begin{equation*}
							2-2i+(n+1)ki=2+(n-1)i\geq 2+2(n-1)=2n>n=nk
						\end{equation*}
						since $n>0$ and $i>1$.
					\item If $k=0$, then $2-2i+(n+1)ki=2-2i<0$ since $i>1$.
					\item If $k<0$, then $2-2i+(n+1)ki<(n+1)ki<nk$ since $i>1$.
				\end{itemize}
				For $q=2i+1$ with $i\geq 1$ we similarly have
				\begin{equation*}
					\begin{split}
						\Hom_{\grrMod A^e}(F^{-q},A\<2-q\>)&=\Hom_{\grrMod A^e}(A^e\<-(n+1)ki-k\>,A\<1-2i\>)\\
						&\cong A^{1-2i+(n+1)ki+k},
					\end{split}
				\end{equation*}
				which also vanishes for all $k\in\ints$ and $i\geq 1$:
				\begin{itemize}
					\item If $k>1$, then
						\begin{equation*}
							1-2i+(n+1)ki+k=1+(k-2)i+k+nki>nki\geq nk.
						\end{equation*}
					\item If $k=1$, then
						\begin{equation*}
							1-2i+(n+1)ki+k=2+(n-1)i\geq 2+(n-1)>n=nk
						\end{equation*}
						since $n>0$ and $i\geq 1$.
					\item If $k=0$, then $1-2i+(n+1)ki+k=1-2i<0$ as $i\geq 1$.
					\item If $k<0$, then $1-2i+(n+1)ki+k<(n+1)ki<nk$.\qedhere
				\end{itemize}
			\end{proof}
			From \cref{intrinsicformality} it follows that the triangulated category $\cat{T}$ generated by a compact $\proj^n[k]$-like object is independent of the ambient triangulated category.
			More precisely, we obtain the following concrete model of $\cat{T}$.
			\begin{prop}
				\label{moritamodel}
				Let $\cat{C}$ be an idempotent-complete algebraic triangulated category, $P\in\cat{C}$ a $\proj^n[k]$-like object and $\cat{T}=\thick_\cat{C}(P)$.
				Then $\cat{T}\cong\perf(A)$, where $A=\kk[x]/(x^{n+1})$ is viewed as a dg algebra with $|x|=k$ and differential $d=0$.
			\end{prop}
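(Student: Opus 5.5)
Since $\cat{C}$ is algebraic, we may assume $\cat{C}$ is the homotopy category $H^0(\cat{A})$ of a pretriangulated dg category $\cat{A}$ (Keller's theorem: an algebraic triangulated category is equivalent to the stable category of a Frobenius exact category, hence to $H^0$ of a pretriangulated dg category). Let $B=\REnd_{\cat{A}}(P)$ be the dg endomorphism algebra of $P$. Then $H^*(B)\cong\End^*_\cat{C}(P)\cong\kk[x]/(x^{n+1})$ with $|x|=k$ as graded algebras, by the $\proj^n[k]$-like hypothesis. By \cref{intrinsicformality}, $B$ is quasi-isomorphic to $A$ with zero differential. Quasi-isomorphic dg algebras have equivalent derived categories via a zigzag of restriction and extension functors, and these equivalences restrict to $\perf(B)\simeq\perf(A)$. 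It therefore suffices to show $\thick_\cat{C}(P)\simeq\perf(B)$.

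For this step I would use the standard Keller Morita argument. The dg functor $\Hom_\cat{A}(P,-)\colon\cat{A}\to\C_\dg(B)$ induces a triangulated functor $F\colon\cat{C}\to\D(B)$ with $F(P)=B$. We show $F$ is fully faithful on $\thick_\cat{C}(P)$:
\begin{enumerate}
\item On the pairs $(P,P[r])$ the induced map $\Hom_\cat{C}(P,P[r])\to\Hom_{\D(B)}(B,B[r])=H^r(B)$ is the identity by construction.
\item The class of pairs on which $F$ is bijective on Homs is closed under shifts and cones (five lemma) and under direct summands. Dévissage in both variables then extends full faithfulness to all of $\thick_\cat{C}(P)$.
\end{enumerate}
The essential image is a triangulated subcategory of $\D(B)$ containing $B$. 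Because $\cat{C}$ is idempotent complete, $\thick_\cat{C}(P)$ is closed under summands, and so the image is thick and contains $B$. It therefore equals $\thick_{\D(B)}(B)=\perf(B)$. Composing gives $\cat{T}\simeq\perf(B)\simeq\perf(A)$, with $P\mapsto A$.

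The main obstacle is the enhancement step: $\cat{C}$ must have a dg enhancement in which the dg endomorphism algebra of $P$ realizes the graded $\End^*$ as its cohomology. This is standard for algebraic triangulated categories via Keller's theorem, and I would cite it rather than reprove it. Idempotent completeness is used precisely to identify the essential image with $\perf(B)$, which is by definition closed under summands. No Calabi--Yau assumption is needed, only that $P$ is $\proj^n[k]$-like.
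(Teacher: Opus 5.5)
Your proposal is correct and is essentially the paper's proof. The paper cites Keller's theorem \cite[Thm.~3.8]{KellerICM} for $\thick_\cat{C}(P)\cong\perf(\REnd_\cat{C}(P))$, then uses $H^*(\REnd_\cat{C}(P))\cong A$ together with \cref{intrinsicformality}, exactly as you do; the only difference is that you reprove the Morita step by d\'evissage instead of citing it, with idempotent completeness entering in the same place (lifting idempotents through the fully faithful functor so that the essential image is closed under summands).
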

			\begin{proof}
				By \cite[Thm.~3.8]{KellerICM}, there is an equivalence $\thick_\cat{C}(P)\cong\perf(\REnd_\cat{C}(P))$.
				By the definition of $\proj^n[k]$-like objects we have an isomorphism of graded algebras $H^*(\REnd_\cat{C}(P))\cong A$.
				Since $A$ is intrinsically formal by \cref{intrinsicformality}, there is a quasi-isomorphism $\REnd_\cat{C}(P)\simeq A$, and thus $\perf(\REnd_\cat{C}(P))\cong\perf(A)$.
			\end{proof}
			The following lemma shows that a $\proj^n[k]$-like object is a $\proj^n[k]$-object in the thick subcategory that it generates, cf.~\cite[Rem.~2.2]{HocheneggerKrugFormality}.
			The argument is the same used for spherelike objects in \cite[Prop.~2.8]{HolmJorgensenYang}, and we include the proof for completeness.
			\begin{lemma}
				\label{plikecy}
				The triangulated category $\cat{T}$ generated by a $\proj^n[k]$-like object is $nk$-Calabi--Yau.
				In particular $P$ is a $\proj^n[k]$-object in $\cat{T}$.
			\end{lemma}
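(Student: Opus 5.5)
The plan is to construct the Serre functor on $\cat{T}=\thick(P)$ directly and compare it with the shift $[nk]$. Here $\cat{T}$ is $nk$-Calabi--Yau if there are isomorphisms
\begin{equation*}
\Hom_\cat{T}(X,Y)\cong\Hom_\cat{T}(Y,X[nk])^\vee
\end{equation*}
natural in $X,Y\in\cat{T}$. Since the statement concerns only $\cat{T}$, I will work in the model of \cref{moritamodel}: $\cat{T}\cong\perf(A)$ with $A=\kk[x]/(x^{n+1})$, $|x|=k$, $d=0$, and $P$ corresponding to $A$. (If one wants to avoid the idempotent-completeness hypothesis, the same argument runs with $\REnd_\cat{C}(P)\simeq A$ by \cref{intrinsicformality}, i.e.\ with $\cat{T}$ identified with the full subcategory of $\perf(A)$ generated by $A$.)

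\emph{Step 1: $A$ is a symmetric graded Frobenius algebra of degree $nk$.} Define $\varepsilon\:A\to\kk[-nk]$ by $x^n\mapsto 1$ and $x^i\mapsto 0$ for $i<n$. The pairing $(a,b)\mapsto\varepsilon(ab)$ is nondegenerate, since $x^i$ pairs with $x^{n-i}$. It is symmetric because $A$ is graded commutative, although some care is needed with signs when $k$ is odd. I would check directly that $x^ix^j=(-1)^{ijk^2}x^jx^i$ and that $\varepsilon(x^ix^j)\neq 0$ only when $i+j=n$. Alternatively, one can simply note that the map $A\to A^\vee[-nk]$, $a\mapsto\varepsilon(a\cdot-)$, is an isomorphism of dg $A$-bimodules; this bimodule isomorphism is all that is used below.

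\emph{Step 2: duality on $\perf(A)$.} For $X,Y\in\perf(A)$ we have
\begin{equation*}
\RHom_A(X,Y)\cong Y\Lotimes_A\RHom_A(X,A).
\end{equation*}
Combining this with tensor-hom adjunction and Step 1 gives
\begin{equation*}
\RHom_A(Y,X)^\vee\cong \RHom_A(X,A^\vee\otimes_A Y)^{\phantom{\vee}}\cong\RHom_A(X,Y[-nk]).
\end{equation*}
A cleaner route is to prove
\begin{equation*}
\Hom(X,Y)\cong\Hom(Y,X[nk])^\vee
\end{equation*}
for $X=A[i]$ and $Y=A[j]$ first, where it reduces to the perfect pairing $A^{j-i}\times A^{i-j+nk}\to\kk$ of Step 1. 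This family of isomorphisms is natural with respect to all morphisms between shifts of $A$, which are given by multiplication by elements of $A$; that naturality is exactly the bimodule property of $\varepsilon$. Both sides are cohomological functors in each variable, so a five-lemma/dévissage argument over triangles extends the isomorphism to $\thick(A)$. Here one must be careful that the induced isomorphism is natural, which is why I prefer the dg-level argument via the bimodule isomorphism $A\cong A^\vee[-nk]$: it gives a Serre functor
\begin{equation*}
-\Lotimes_A A^\vee\cong[nk]
\end{equation*}
on $\perf(A)$ functorially. This uses that $\perf(A)$ has finite-dimensional total Hom spaces, which holds because $A$ is finite-dimensional.

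\emph{Step 3: conclusion.} Specializing the natural isomorphism to $X=P$ yields
\begin{equation*}
\Hom_\cat{T}(P,-)\cong\Hom_\cat{T}(-,P[nk])^\vee,
\end{equation*}
so $P$ is $nk$-Calabi--Yau in $\cat{T}$. Together with its endomorphism algebra, this makes $P$ a $\proj^n[k]$-object in $\cat{T}$.

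The main obstacle I expect is keeping naturality and the Koszul signs correct in Step 2, i.e.\ verifying that $\varepsilon$ really induces an isomorphism $A\cong A^\vee[-nk]$ of dg bimodules, including the odd-$k$ case.
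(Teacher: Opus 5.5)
\paragraph{Same route, but the key step fails for odd $k$.}
Your route is the paper's. You identify $\cat{T}$ with $\perf(A)$, use tensor--hom adjunction to see that the Nakayama functor $-\Lotimes_A DA$ is a Serre functor, and conclude from a bimodule isomorphism $DA\cong A[nk]$; the paper asserts this isomorphism without justification.

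The gap is in Step~1, where you expected it, and it is more than sign bookkeeping. $A=\kk[x]/(x^{n+1})$ is commutative in the ungraded sense, so $x^ix^j=x^jx^i$. The identity $x^ix^j=(-1)^{ijk^2}x^jx^i$ you propose to check is therefore false for $k$ odd and $n\ge 2$ (take $i=j=1$), and $A$ is not graded commutative there.

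What the map $A[nk]\to DA$, $1\mapsto\varepsilon$, actually needs in order to be a bimodule map is graded symmetry, $\varepsilon(ab)=(-1)^{|a||b|}\varepsilon(ba)$. Taking $a=x$ and $b=x^{n-1}$, this forces $(-1)^{(n-1)k}=1$. So your argument works exactly when $k$ is even or $n$ is odd. For $k$ odd and $n$ even there is no other candidate, since $(DA)^{-nk}=\kk\varepsilon$. Instead $DA$ is $A[nk]$ twisted on one side by the Nakayama automorphism $\nu\colon x\mapsto -x$, and your Step~2 then yields the Serre functor $\nu^*\circ[nk]$, not $[nk]$. The d\'evissage route does not avoid this: on shifts of $A$ a sign-free pairing is natural, but extending it over cones is exactly where the sign enters.

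\paragraph{The first claim genuinely fails for $k=1$, $n$ even.}
This is an actual obstruction, not an artefact of the proof. For $k=1$, $n$ even and $\mathrm{char}\,\kk\neq 2$, $\perf(A)$ is not $n$-Calabi--Yau at all.

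A natural isomorphism $\Hom(X,Y)\cong\Hom(Y,X[n])^\vee$ gives traces $\mathrm{tr}_X$ on $\Hom(X,X[n])$ satisfying $\mathrm{tr}_X(g\circ f)=\mathrm{tr}_Y(f[n]\circ g)$. Take $Q=A[1]$ and the triangle $A\xrightarrow{x}Q\xrightarrow{i}C\xrightarrow{p}Q$. A direct computation in the model gives generators $g$ of $\Hom(Q,C[n])$ and $g'$ of $\Hom(C,Q[n])$ with two properties:
\begin{itemize}
\item $p[n]\circ g=g'\circ i$, and this element spans $\Hom(Q,Q[n])$;
\item $i[n]\circ g'=(-1)^{n+1}g\circ p$, the sign coming from $d_{C[n]}=(-1)^n d_C$.
\end{itemize}
For $n$ even the trace identity applied to both factorizations forces $\mathrm{tr}_Q=0$. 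This contradicts non-degeneracy, because $\Hom(Q,Q)=\kk$.

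Since the paper's proof rests on the same unproved bimodule isomorphism, it has the same gap.

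\paragraph{What survives.}
The second sentence holds for all $n$ and $k$. That $P$ is $nk$-Calabi--Yau as an object only needs $S(A)=DA\cong A[nk]$ as right $A$-modules. This is true because $\varepsilon\cdot x^j=(x^{n-j})^*$, so $\varepsilon$ freely generates $DA$ as a right module.

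Minor point: your first display in Step~2 should end in $\RHom_A(X,Y[nk])$, not $\RHom_A(X,Y[-nk])$.
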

			\begin{proof}
				By \cref{moritamodel} we have $\cat{T}\cong\perf(A)$, where $A=\kk[t]/(x^{n+1})$ with $|x|=k$.
				For $X,Y\in\perf(A)$, tensor-hom adjunction and $\kk$-linear duality provide natural isomorphisms
				\begin{equation*}
					\begin{split}
						\RHom_A(Y,D(A)\Lotimes_A X)^\vee&\cong \RHom_A(Y,D(A)\Lotimes_A X)^\vee\\
						&\cong\RHom_{A^\op}(\RHom_A(Y,D(A)),D(X)\\
						&\cong\RHom_{A^\op}(D(Y),D(X))\\
						&\cong\RHom_A(X,Y),
					\end{split}
				\end{equation*}
				where $D=\RHom_\kk(-,\kk)\: \D(A)\to\D(A^\op)$ denotes the $\kk$-linear duality functor.
				By taking $H^0$, we get $\Hom_{\perf(A)}(Y,D(A)\Lotimes_A X)^\vee\cong\Hom_{\perf(A)}(X,Y)$, and hence $D(A)\Lotimes_A-$ is a Serre functor for $\perf(A)$.
				The claim follows since $D(A)\cong A[nk]$ as dg $A$-$A$-bimodule.
			\end{proof}
			\begin{rem}
				If $P$ is a $\proj^n[k]$-object, one can also show that $\cat{T}=\thick_\cat{C}(P)$ is Calabi--Yau by a standard d\'evissage argument: the subcategory of objects $X$ for which $\Hom_\cat{C}(X,-)\cong\Hom_\cat{C}(-,X[nk])^\vee$ is thick and contains $P$.
			\end{rem}
	\section{Classification of weight structures and t-structures}
		\label{classification}
		In this section we classify the weight structures and \textbf{t}-structures on the triangulated category $\cat{T}$ generated by a $\proj^n[k]$-object.
		First we recall the definitions.
		\subsection{Weight structures and t-structures}
			\textbf{t}-structures on triangulated categories were introduced in \cite{BBD}.
			\begin{dfn}
				A \emph{\textbf{t}-structure} on a triangulated category $\cat{D}$ is a pair $t=(\cat{D}^{t\leq 0},\cat{D}^{t\geq 0})$ of strict full subcategories such that
				\begin{itemize}
					\item $\cat{D}^{t\leq 0}[1]\subseteq\cat{D}^{t\leq 0}$ and $\cat{D}^{t\geq 0}[-1]\subseteq\cat{D}^{t\geq 0}$,
					\item $\cat{D}^{t\leq 0}\perp\cat{D}^{t\geq 0}[-1]$,
					\item for all $X\in\cat{D}$ there is a triangle (called \emph{\textbf{t}-decomposition} of $X$)
						\begin{equation*}
							t_{\leq 0}X\to X\to t_{>0}X\to t_{\leq 0}X[1]
						\end{equation*}
						with $t_{\leq 0}X\in\cat{D}^{t\leq 0}$ and $t_{>0}X\in\cat{D}^{t\geq 0}[-1]$.
				\end{itemize}
				The full subcategory $\heart_t=\cat{D}^{t\leq 0}\cap\cat{D}^{t\geq 0}$ is called the \emph{heart} of $t$.
				$t$ is \emph{bounded} if $\cat{D}=\tria_\cat{D}(\heart_t)$.
			\end{dfn}
			We also use the notation $\cat{D}^{t>0}=\cat{D}^{t\geq 0}[-1]$ and $\cat{D}^{t<0}=\cat{D}^{t\leq 0}[1]$, and also $\cat{D}^{t\geq n}=\cat{D}^{t\geq 0}[-n]$ for $n\in\ints$ (and analogously $\cat{D}^{t\leq n}$).
			Recall that \textbf{t}-decompositions are unique up to isomorphism, and furthermore define functors $t_{>0}\: \cat{D}\to \cat{D}^{t>0}$ and $t_{\leq 0}\: \cat{D}\to\cat{D}^{t\leq 0}$ that are left (resp.~right) adjoint to the respective inclusions \cite[Prop.~1.3.3]{BBD}.
			By shifting we also obtain truncation functors $t_{\leq r}$, $t_{<r}$, $t_{\geq r}$ and $t_{>r}$ for any $r\in\ints$, with similar adjunction properties.
			
			The heart of a \textbf{t}-structure is an abelian category, with the short exact sequences $0\to X\to Y\to Z\to 0$ in $\heart_t$ in bijection with triangles $X\to Y\to Z\to X[1]$, cf.~\cite[Prop.~1.2.2]{BBD}.
			This gives rise to a natural isomorphism $\Hom_\cat{D}(-,-[1])\cong\Ext_{\heart_t}^1(-,-)$.
			
			Weight structures (also known as co-\textbf{t}-structures) were originally defined in \cite{Bondarko} and \cite{Pauksztello}.
			\begin{dfn}
				A \emph{weight structure} on a triangulated category $\cat{C}$ is a pair $w=(\cat{C}_{w\leq 0},\cat{C}_{w\geq 0})$ of Karoubi-closed full subcategories such that
				\begin{itemize}
					\item $\cat{C}_{w\leq 0}[1]\subseteq\cat{C}_{w\leq 0}$ and $\cat{C}_{w\geq 0}[-1]\subseteq\cat{C}_{w\geq 0}$,
					\item $\cat{C}_{w\geq 0}[-1]\perp\cat{C}_{w\leq 0}$,
					\item for all $X\in\cat{C}$ there is a triangle (called \emph{weight decomposition} of $X$)
						\begin{equation*}
							w_{>0}X\to X\to w_{\leq 0}X\to w_{>0}X[1]
						\end{equation*}
						with $w_{>0}X\in\cat{C}_{w\geq 0}[-1]$ and $w_{\leq 0}X\in\cat{C}_{w\leq 0}$.
				\end{itemize}
				The full subcategory $\coheart_w=\cat{C}_{w\leq 0}\cap\cat{C}_{w\geq 0}$ is called the \emph{coheart} of $w$.
				$w$ is \emph{bounded} if $\cat{C}=\thick_\cat{C}(\coheart_w)$.
			\end{dfn}
			As for \textbf{t}-structures we write $\cat{C}_{w>0}=\cat{C}_{w\geq 0}[-1]$, $\cat{C}_{w\geq n}=\cat{C}_{w\geq 0}[-n]$, and so on.
			In contrast to \textbf{t}-decompositions, by \cite[Rem.~1.2.2]{Bondarko} weight decompositions are not unique unless $\coheart_w=\{0\}$.
			In particular $w_{\leq 0}$ and $w_{>0}$ do not define functors.
		\subsection{The classification result}
			\label{standardstructs}
			Let $\cat{T}$ be the triangulated category generated by a $\proj^n[k]$-object $P$ for $n\geq 0$ and $k\in\ints$.
			Concretely, by \cref{moritamodel} we have $\cat{T}=\perf(A)$ where $A=\kk[x]/(x^{n+1})$ with $|x|=k$, and then the $\proj^n[k]$-object is $P=A$.
			Our goal is to classify the weight structures and \textbf{t}-structures on $\cat{T}$.
			
			Independent of the values of $n$ and $k$, we always have the two trivial weight structures and \textbf{t}-structures $(\cat{T},\{0\})$ and $(\{0\},\cat{T})$ (as on any triangulated category).
			
			Depending on the value of $k$, there are furthermore the following standard weight structure respectively standard \textbf{t}-structure:
			\begin{itemize}
				\item If $n>0$ and $k\leq 0$, then $A$ is a proper non-positive dg algebra (i.e.~$H^*(A)$ is finite-dimensional and $H^{>0}(A)=0$).
					Hence by \cite[Thm.~A.1]{BruestleYang} (see also \cite[Prop.~5.2.2 and 6.2.1]{Bondarko}) $\perf(A)$ admits the \emph{standard weight structure} $(\cat{T}_{\geq 0},\cat{T}_{\leq 0})$ with $\coheart=\add(A)$.
					The standard weight structure is bounded, so $\cat{T}_{\geq 0}=\Kar\extclos\{A[i]\mid i\leq 0\}$ and $\cat{T}_{\leq 0}=\Kar\extclos\{A[i]\mid i\geq 0\}$.
				\item If $k>0$, then $A$ is a proper positive dg algebra (i.e.~$H^*(A)$ is finite-dimensional, $H^{<0}(A)=0$, and $H^0(A)$ is semisimple).
					By \cite[Thm.~7.1]{KellerNicolas}, $\perf(A)$ admits the \emph{standard \textbf{t}-structure} $(\cat{T}^{\leq 0},\cat{T}^{\geq 0})$ with $\heart=\extclos(A)$.
					The standard \textbf{t}-structure is bounded with finite-length heart, so we have $\cat{T}^{\leq 0}=\extclos\{A[i]\mid i\geq 0\}$ and $\cat{T}^{\geq 0}=\extclos\{A[i]\mid i\leq 0\}$.
				\item If $n=0$, then $A=\kk$ is both a positive and proper non-positive dg algebra.
					Hence $\cat{T}$ admits both the standard weight structure and the standard \textbf{t}-structure, with $\heart=\add(\kk)=\coheart$.
			\end{itemize}
			To avoid confusion with other \textbf{t}-structures we denote the standard \textbf{t}-structure by just $(\cat{T}^{\leq 0},\cat{T}^{\geq 0})$, and write $\heart$ (with no subscript) for its heart.
			Analogously we write $(\cat{T}_{\geq 0},\cat{T}_{\leq 0})$ for the standard weight structure, with coheart $\coheart$.
			The standard weight structure and \textbf{t}-structure can also be constructed directly from $P$ (i.e.~without passing to the dg algebra $A$) by using the correspondence between simple-minded collections and bounded \textbf{t}-structures with finite-length heart, respectively (classical) silting objects and bounded weight structures.
			For details see \cite[\S 5]{KoenigYang} and references therein.
			
			Our main result is the following classification theorem.
			\begin{thm}
				\label{classificationthm}
				Let $\cat{T}$ be the triangulated category generated by a $\proj^n[k]$-object.
				\begin{enumerate}
					\item For $n=0$, any non-trivial \textbf{t}-structure on $\cat{T}$ is obtained by shifting the standard \textbf{t}-structure, and any non-trivial weight structure on $\cat{T}$ is obtained by shifting the standard weight structure.
					\item For $n>0$ and $k>0$, any non-trivial \textbf{t}-structure on $\cat{T}$ is obtained by shifting the standard \textbf{t}-structure.
						$\cat{T}$ does not admit a non-trivial weight structure.
					\item For $n>0$ and $k\leq 0$, any non-trivial weight structure on $\cat{T}$ is obtained by shifting the standard weight structure.
						$\cat{T}$ does not admit a non-trivial \textbf{t}-structure.
				\end{enumerate}
			\end{thm}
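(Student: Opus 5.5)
The proof follows the strategy outlined in the introduction. It has four parts: trivial structures are those with zero (co)heart; a Calabi--Yau obstruction rules out the ``wrong'' type of structure; any non-zero (co)heart contains a shift of $P$; and a (co)heart containing $P[m]$ forces the structure to be the shifted standard one.

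\emph{Zero (co)hearts.} First show that a t-structure or weight structure on $\cat{T}$ with zero (co)heart is trivial.

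For a weight structure $w$, take a weight decomposition of $P$. Since $\End^*(P)$ is finite-dimensional and $\cat{T}=\thick(P)$, every object has only finitely many non-vanishing shifts of Hom against $P$. Choose $r$ maximal with $P\in\cat{T}_{w\leq r}$ (if $P$ lies in no $\cat{T}_{w\le r}$, or in all of them, then the orthogonality conditions force triviality). With coheart zero, the weight decomposition $w_{>r}P\to P\to w_{\le r}P$ then splits off, by the standard argument of \cite[Rem.~1.2.2]{Bondarko}, so $P\in\cat{T}_{w\le r-1}$, a contradiction. Hence $P$ lies in all $\cat{T}_{w\le r}$ or in none, and the structure is trivial since $\cat{T}=\thick(P)$ and the halves are Karoubi- and extension-closed.

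For t-structures the argument is analogous: if $\heart_t=0$, then all cohomology $H^i_t(P)$ vanishes. Since $\Hom(P,P[r])=0$ for $|r|\gg0$, this forces $P\in\bigcap_r\cat{T}^{t\le r}$ or $P\in\bigcap_r\cat{T}^{t\ge r}$, and again the structure is trivial.

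\emph{Non-existence via the Calabi--Yau property.} By \cref{plikecy}, $\cat{T}$ is $nk$-CY.

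If $X\neq0$ lies in a coheart, then $\Hom(X,X[nk])\cong\Hom(X,X)^\vee\neq0$. But coheart objects have $\Hom(X,X[r])=0$ for $r>0$, so a non-trivial weight structure needs $nk\le0$, i.e.\ $k\le0$ when $n>0$.

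Dually, heart objects have no negative self-extensions, so a non-trivial t-structure requires $nk\ge0$. If $k=0$ and $n>0$, a heart object $X$ also has $\Hom(X,X[r])=0$ for $r<0$, and this is compatible with $nk=0$; so this case is handled instead by the main step below, which shows that the heart would contain a shift of $P$. But $\Hom(P,P[r])=0$ for $r\neq0$ while $\End(P)=\kk[x]/(x^{n+1})$ is not a division ring, so the heart would have to be the standard coheart, contradicting the fact that $\add(P)$ is not abelian. The same reasoning handles $k<0$ for t-structures and $k>0$ for weight structures. Combined with the previous step, this yields all the non-existence claims.

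\emph{Main step: a non-zero (co)heart contains a shift of $P$.} This is the step I expect to be the main obstacle. The key claim is: if $X\in\cat{T}$ is indecomposable and not isomorphic to a shift of an object of the standard (co)heart, then $\Hom(X,X[r])\ne0$ for some $r>0$ when $k\le0$, respectively for some $r<0$ when $k>0$.

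For $k>0$, I would work model-free with the standard t-structure. Let $a<b$ be the extreme degrees in which $X$ has non-zero standard cohomology. The composite
\[
X\to \tau_{\ge b}X=H^b(X)[-b]\to P[-b]\to H^a(X)[-a]\to X[b-a]\,,
\]
formed using the socle and top of the finite-length heart $\extclos(P)$, has degree $a-b<0$ after shifting appropriately, and it is non-zero by the $t$-exactness of the construction. When $X$ has a single cohomology, $X$ lies in the heart up to shift, and a heart object is $P$-filtered; the self-extension condition together with uniserial structure then forces $X\cong P$.

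For $k\le0$, I would use the model $\perf(A)$ with minimal semifree resolutions (the differential has entries in $xA$). For such an $X$, if its weights range over $[a,b]$ with $a<b$, project onto the top weight summand $A[\,\cdot\,]$ and include into the bottom one; the resulting map $X\to X[b-a]$ is not null-homotopic by minimality. This uses the cases $k<0$ and $k=0$ separately for the degree bookkeeping.

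Given the claim, a non-zero (co)heart contains an indecomposable object all of whose positive (resp.\ negative) self-Homs vanish, and hence contains some $P[m]$.

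\emph{Identification with the shifted standard structure.} Finally, if $P[m]$ lies in the (co)heart, then by extension- and Karoubi-closure the corresponding aisle contains $\extclos\{P[i]\mid i\ge m\}$, and dually for the other half. For the standard structure these closures are exactly the two halves, shifted by $m$, as recorded in \cref{standardstructs}. Since each half of a t-structure or weight structure determines the other by orthogonality, the two structures coincide. The $n=0$ case is covered by the same argument, since then $\cat{T}=\Db(\rmodfd\kk)$ and both standard structures exist.
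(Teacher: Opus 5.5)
Your architecture matches the paper's: degenerate case, Calabi--Yau obstruction, self-Hom obstruction (via the standard \textbf{t}-structure for $k>0$ and minimal models for $k\le 0$), then identification of the halves. However, two steps do not go through as written.

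\textbf{The degenerate case.} Your extremal-$r$ argument only reproves what is automatic when the (co)heart is zero: $\cat{T}_{w\le r}$ and $\cat{T}_{w\ge r}$ (resp.\ $\cat{T}^{t\le r}$, $\cat{T}^{t\ge r}$) do not depend on $r$. Such a shift-invariant pair is a semi-orthogonal decomposition. Neither orthogonality nor finite-dimensionality of $\End^*(P)$ prevents $P$ from being a non-split extension $w_{>0}P\to P\to w_{\le 0}P\to w_{>0}P[1]$ with both outer terms non-zero. So the case ``$P$ lies in no $\cat{T}_{w\le r}$'' is not handled. The same holds for your \textbf{t}-structure version: vanishing of all $H^i_t(P)$ is automatic for zero heart and does not place $P$ in one half.

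The missing input is the Calabi--Yau property, used exactly as in \cref{zeroheart}. By shift-invariance, $\Hom_\cat{T}(w_{\le 0}P,w_{>0}P[1])\cong\Hom_\cat{T}(w_{>0}P[1],w_{\le 0}P[nk])^\vee=0$. So the triangle splits, and indecomposability of $P$ puts $P$ into one half. This step is load-bearing: the non-existence statements in 2) and 3), and the reduction to structures with non-zero (co)heart, all depend on it.

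\textbf{The case $k=1$.} Your claim that for $X$ in a single standard degree ``the self-extension condition together with uniserial structure forces $X\cong P$'' is false. By \cref{indecinstdheart} the standard heart contains $X_2$ (and all $X_i$ if $n=1$). Since these lie in $\heart$, they satisfy $\Hom_\cat{T}(X_i,X_i[r])=0$ for all $r<0$. Nothing you say rules out a heart $\heart_t$ containing $X_2[m]$ but not $P[m]$, so your identification step never gets the shift of $P$ it needs.

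The paper closes this by realizing $P$ as the kernel in $\heart$ of the nilpotent endomorphism of $X$ and comparing kernels in $\heart_t$ and $\heart$. That comparison can be justified as follows. Suppose $0\neq X\in\heart\cap\heart_t$. By your self-Hom obstruction, every indecomposable of $\heart_t$ has the form $Y[m]$ with $Y\in\heart$, and $m\neq 0$ is impossible because $\Hom_\cat{T}(Y,X)\neq 0\neq\Hom_\cat{T}(X,Y)$. Hence $\heart_t\subseteq\heart$, short exact sequences in $\heart_t$ are short exact in $\heart$, and the $\heart_t$-kernel of that endomorphism is $P$.

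A smaller point: for $k=0$ and \textbf{t}-structures, your $k\le 0$ argument produces positive self-Homs. You need the $0$-Calabi--Yau symmetry to turn these into negative ones. You also need semisimplicity of $\heart_t$ (again from $0$-Calabi--Yau) for your remark that $\End(P)$ is not a division ring to yield a contradiction.
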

			To prove the theorem, we consider each case separately.
			This occupies the remainder of this section.
		\subsection{The degenerate case}
			We first consider the degenerate case of weight structures with $\coheart_w=\{0\}$ and \textbf{t}-structures with $\heart_t=\{0\}$.
			This degenerate case is independent of $n$ and $k$.
			\begin{prop}\leavevmode
				\label{zeroheart}
				\begin{enumerate}
					\item Any weight structure on $\cat{T}$ with zero coheart is trivial.
					\item Any \textbf{t}-structure on $\cat{T}$ with zero heart is trivial.
				\end{enumerate}
			\end{prop}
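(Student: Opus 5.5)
The plan is to show that a zero (co)heart forces both aisles to be closed under all shifts. Then $\cat{T}$ gets a semiorthogonal decomposition, which the Calabi--Yau property of \cref{plikecy} makes orthogonal on both sides. Indecomposability of $P$ then forces one half to vanish.

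\emph{Weight structures.} Let $w$ have $\coheart_w=\{0\}$ and take $X\in\cat{C}_{w\geq 0}$ with a weight decomposition $w_{>0}X\to X\to w_{\leq 0}X\to w_{>0}X[1]$. The rotated triangle $X\to w_{\leq 0}X\to w_{>0}X[1]$ shows that $w_{\leq 0}X$ is an extension of two objects of $\cat{C}_{w\geq 0}$. Since $\cat{C}_{w\geq 0}$ is extension-closed (a standard property of weight structures), $w_{\leq 0}X\in\coheart_w=0$. Hence $X\cong w_{>0}X\in\cat{C}_{w\geq 1}$, so $\cat{C}_{w\geq 0}=\cat{C}_{w\geq 1}$. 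The dual argument gives $\cat{C}_{w\leq 0}=\cat{C}_{w\leq -1}$.

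Consequently $\cat{U}=\cat{C}_{w\geq 0}$ and $\cat{V}=\cat{C}_{w\leq 0}$ are closed under all shifts. Being extension-closed and Karoubi-closed, they are thick subcategories, and $\Hom(\cat{U},\cat{V})=\Hom(\cat{C}_{w\geq 1},\cat{C}_{w\leq 0})=0$. By \cref{plikecy}, $\cat{T}$ is $nk$-Calabi--Yau, so $\Hom(\cat{V},\cat{U})\cong\Hom(\cat{U},\cat{V}[nk])^\vee=0$ as well.

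Now take the weight decomposition $w_{>0}P\to P\to w_{\leq 0}P\to w_{>0}P[1]$. Its connecting morphism is a map from $\cat{V}$ to $\cat{U}$, hence zero, so $P\cong w_{>0}P\oplus w_{\leq 0}P$. Since $\End_\cat{T}(P)=\kk$ is local, $P$ is indecomposable, and one summand vanishes. If $P\in\cat{U}$, then $\cat{T}=\thick(P)\subseteq\cat{U}$, and $\cat{V}\perp$-orthogonality gives $\Hom(\cat{V},\cat{V})=0$, so $\cat{V}=0$ and $w=(\{0\},\cat{T})$ in the paper's ordering. The case $P\in\cat{V}$ is symmetric and gives $w=(\cat{T},\{0\})$.

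\emph{t-structures.} The argument is parallel. For $X\in\cat{D}^{t\leq 0}$, the truncation $t_{\geq 0}X$ lies in $\cat{D}^{t\leq 0}\cap\cat{D}^{t\geq 0}=\heart_t=0$, which follows from the standard truncation triangle. Hence $X\cong t_{\leq -1}X$, giving $\cat{D}^{t\leq 0}=\cat{D}^{t\leq -1}$ and dually $\cat{D}^{t\geq 0}=\cat{D}^{t\geq 1}$. Both aisles are then shift-closed and extension-closed. They are also closed under summands, being defined by orthogonality, so they are thick subcategories with $\Hom(\cat{D}^{t\leq 0},\cat{D}^{t\geq 0})=0$. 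The Calabi--Yau property again gives vanishing in the other direction. The $\textbf{t}$-decomposition of $P$ then splits, and indecomposability of $P$ yields triviality exactly as above.

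The only points needing care are the extension-closedness of $\cat{C}_{w\geq 0}$ and the use of Serre duality from \cref{plikecy} to get two-sided orthogonality, which is what makes the decomposition triangle split. I expect no real obstacle beyond these.
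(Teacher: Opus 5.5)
Your proposal is correct and follows the same route as the paper: shift-invariance of both halves from the zero (co)heart, which the paper cites from Holm--J\o rgensen--Yang and you prove directly; Calabi--Yau vanishing of the connecting morphism so that the decomposition of $P$ splits; then indecomposability of $P$ plus thickness of the halves. One small slip is that for $k=0$ one has $\End_\cat{T}(P)\cong\kk[x]/(x^{n+1})$ rather than $\kk$, but this ring is still local, so $P$ is indecomposable as your argument requires.
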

			\clearpage
			\begin{proof}\leavevmode
				\begin{enumerate}
					\item Let $w=(\cat{T}_{w\geq 0},\cat{T}_{w\leq 0})$ be a weight structure on $\cat{T}$ with $\coheart_w=\{0\}$.
						This easily implies $\cat{T}_{w\geq 0}=\cat{T}_{w\geq i}$ and $\cat{T}_{w\leq 0}=\cat{T}_{w\leq i}$ for all $i\in\ints$, see e.g.~\cite[Lemma~4.1]{HolmJorgensenYang}.
						Let $w_{>0}P\to P\to w_{\leq 0}P\to w_{>0}P[1]$ be a weight decomposition of $P$.
						Since $\cat{T}$ is $nk$-Calabi--Yau, we have
						\begin{equation*}
							\Hom_\cat{T}(w_{\leq 0}P,w_{>0}P[1])\cong\Hom_\cat{T}(w_{>0}P[1],w_{\leq 0}P[nk])^\vee=0
						\end{equation*}
						as $w_{>0}P[1]\in\cat{T}_{w\geq 0}=\cat{T}_{w>0}$ and $w_{\leq 0}P[nk]\in\cat{T}_{w\leq -nk}=\cat{T}_{w\leq 0}$.
						Hence the weight decomposition triangle splits, so $P\cong w_{>0}P\oplus w_{\leq 0}P$.
						But $P$ is indecomposable, so either $P\cong w_{>0}P$ or $P\cong w_{\leq 0}P$, and thus $P\in\cat{T}_{w>0}$ or $P\in\cat{T}_{w\leq 0}$.
						Since $\cat{T}_{w>0}$ and $\cat{T}_{w\leq 0}$ are closed under extensions, positive and negative shifts, and direct summands, it follows that $w$ is trivial.
					\item The proof is analogous to that of 1), using a \textbf{t}-decomposition of $P$.\qedhere
				\end{enumerate}
			\end{proof}
			\begin{rem}
				The same argument shows that any weight decomposition (resp.~$t$-decomposition) of an indecomposable objects in a $d$-Calabi--Yau category is trivial as soon as $\cat{C}_{w>0}\perp\cat{C}_{w\leq 1-d}$ (resp.~$\cat{D}^{t\leq 0}\perp\cat{D}^{t\geq 1-d}$).
				These conditions hold in particular if $w$ (resp.~$t$) has zero coheart (resp.~heart), as seen above.
			\end{rem}
		\subsection{The trivial case \texorpdfstring{$n=0$}{n=0}: exceptional object}
			For $n=0$, the value of $k$ is undefined.
			A $\proj^0$-like object is the same as an exceptional object, and we have
			\begin{equation*}
				\cat{T}\cong\perf(\kk)=\Kb(\rprojfg\kk)=\Db(\rmodfd\kk).
			\end{equation*}
			In this case the classification of \textbf{t}-structures and weight structures is easy and well-known:
			\begin{prop}\leavevmode
				\label{casen0}
				\begin{enumerate}
					\item Any \textbf{t}-structure on $\Db(\rmodfd\kk)$ with non-zero heart is a shift of the standard \textbf{t}-structure.
					\item Any weight structure on $\Kb(\rprojfg\kk)$ with non-zero coheart is a shift of the standard weight structure.
				\end{enumerate}
			\end{prop}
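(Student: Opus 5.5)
Every object of $\Db(\rmodfd\kk)$ is isomorphic to a finite direct sum of shifts $\kk[i]$, because $\kk$ is semisimple. Moreover $\Hom(\kk[i],\kk[j])$ is non-zero exactly when $i=j$. The plan is to show that $\kk[m]$ lies in the (co)heart for a unique $m$, and that the aisles are then forced to be those of the standard structure shifted by $m$.

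For 1), let $t$ be a \textbf{t}-structure with $\heart_t\neq 0$. Pick a non-zero $X\in\heart_t$ and write $X\cong\bigoplus_i\kk[i]^{a_i}$. The heart is closed under direct summands, since both aisles are. Hence some $\kk[m]$ lies in $\heart_t$.

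Such an $m$ is unique. If $\kk[m],\kk[m']\in\heart_t$ with $m<m'$, then $\kk[m']=\kk[m][m'-m]\in\cat{D}^{t\leq -(m'-m)}$. Then $\Hom(\kk[m'],\kk[m'])\neq 0$ contradicts the orthogonality $\cat{D}^{t\leq -1}\perp\cat{D}^{t\geq 0}$.

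From $\kk[m]\in\heart_t$ we get $\kk[m+j]\in\cat{D}^{t\leq 0}$ for $j\geq 0$, and $\kk[m+j]\in\cat{D}^{t\geq 0}$ for $j\leq 0$. Conversely, if $j<0$ then $\kk[m+j]\notin\cat{D}^{t\leq 0}$. Indeed, $\kk[m+j]$ lies in $\cat{D}^{t\geq 1}$ because $\kk[m]\in\heart_t$, and an object lying in both $\cat{D}^{t\leq 0}$ and $\cat{D}^{t\geq 1}$ is zero. The symmetric argument shows $\kk[m+j]\notin\cat{D}^{t\geq 0}$ for $j>0$.

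Since both aisles are closed under finite sums and summands, an object $\bigoplus\kk[i]^{a_i}$ lies in $\cat{D}^{t\leq 0}$ if and only if $a_i=0$ for all $i<m$. Likewise it lies in $\cat{D}^{t\geq 0}$ if and only if $a_i=0$ for all $i>m$. This is exactly the standard \textbf{t}-structure shifted so that $\kk[m]$ is in the heart.

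For 2) the argument is the same, working in $\Kb(\rprojfg\kk)$, which is the same category. Pick a non-zero $X\in\coheart_w$. Karoubi-closedness of the two halves gives $\kk[m]\in\coheart_w$ for some $m$.

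Uniqueness of $m$ again comes from orthogonality. If $\kk[m],\kk[m']\in\coheart_w$ with $m<m'$, then $\kk[m]\in\cat{C}_{w\geq 0}$ gives $\kk[m']=\kk[m][m'-m]\in\cat{C}_{w\geq m'-m}\subseteq\cat{C}_{w>0}$. Together with $\kk[m']\in\cat{C}_{w\leq 0}$, this makes $\kk[m']$ orthogonal to itself, which is impossible.

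Shift-stability places $\kk[m+j]$ in the appropriate half for each sign of $j$. Objects in $\cat{C}_{w>0}\cap\cat{C}_{w\leq 0}$ are self-orthogonal and hence zero. This excludes $\kk[m+j]$ from the wrong half for each $j\neq 0$.

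Decomposing an arbitrary object into shifts of $\kk$ and using closure under sums and summands then identifies $w$ with the shifted standard weight structure. I expect no real obstacle here. The only care needed is getting the shift directions and the sign conventions for the homological notation of weight structures right, and noting that summand-closure holds for both halves of each structure.
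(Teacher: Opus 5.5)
Your proposal is correct and is essentially the paper's proof: every object is a finite sum of shifts $\kk[r]$, and Karoubi-closedness of the (co)heart yields some $\kk[m]$ in it. You also spell out the final identification of both halves with the shifted standard ones, which the paper compresses into a single ``therefore''.

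There is one slip in 2). In the paper's convention, $\cat{C}_{w\geq 0}[-1]\subseteq\cat{C}_{w\geq 0}$ and $\cat{C}_{w\geq n}=\cat{C}_{w\geq 0}[-n]$. So $\kk[m']=\kk[m][m'-m]$ lies in $\cat{C}_{w\geq -(m'-m)}$, not in $\cat{C}_{w\geq m'-m}$. Argue instead that $\kk[m]=\kk[m'][m-m']\in\cat{C}_{w\geq m'-m}\subseteq\cat{C}_{w>0}$, while $\kk[m]\in\cat{C}_{w\leq 0}$. The same care fixes which half ``appropriate'' means in your last step. In any case, the uniqueness step is redundant once the halves are determined.
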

			\begin{proof}
				Any object of $\cat{T}=\Db(\rmodfd\kk)=\Kb(\rprojfg\kk)$ is quasi-isomorphic to its cohomology.
				Therefore the indecomposable objects of $\cat{T}$ are the single-degree complexes $\kk[r]$ for $r\in\ints$.
				Now let $t$ be a non-trivial \textbf{t}-structure, or $w$ a non-trivial weight structure on $\cat{T}$.
				By \cref{zeroheart} there is $0\neq X\in\heart_t$ (resp.~$0\neq X\in\coheart_w$).
				Since $\heart_t$ (resp.~$\coheart_w$) is Karoubi-closed, the indecomposable summands of $X$ also lie in $\heart_t$ (resp.~$\coheart_w$).
				Thus $\kk[r]\in\heart_t$ (resp.~$\kk[r]\in\coheart_w$) for some $r\in\ints$, and therefore $t$ (resp.~$w$) is a shift of the standard \textbf{t}-structure (resp.~the standard weight structure).
			\end{proof}
		\subsection{Non-existence of weight and \textbf{t}-structures}
			\label{nonexistence}
			The following general obstruction to the existence of weight structures respectively \textbf{t}-structures is well-known.
			\begin{lemma}
				\label{cyobstruction}
				Let $\cat{C}$ be a $d$-Calabi--Yau triangulated category.
				\begin{enumerate}
					\item If $d>0$, then any weight structure on $\cat{C}$ has zero coheart.
					\item If $d<0$, then any \textbf{t}-structure on $\cat{C}$ has zero heart.
				\end{enumerate}
			\end{lemma}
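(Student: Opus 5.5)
The plan is to use the Calabi--Yau property to force every object $X$ of the (co)heart to have a nonzero endomorphism that contradicts the orthogonality axiom. In each case we pick such an $X\neq 0$ and evaluate Serre duality on $\Hom_\cat{C}(X,X)$.

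For 1), let $w$ be a weight structure on $\cat{C}$ and suppose $0\neq X\in\coheart_w$. First we record that $X\in\cat{C}_{w\geq 0}$ implies $X[-d]\in\cat{C}_{w\geq d}$. Since $d>0$ we have $\cat{C}_{w\geq d}\subseteq\cat{C}_{w\geq 1}=\cat{C}_{w\geq 0}[-1]$. We also have $X\in\cat{C}_{w\leq 0}$. The orthogonality axiom $\cat{C}_{w\geq 0}[-1]\perp\cat{C}_{w\leq 0}$ then gives $\Hom_\cat{C}(X[-d],X)=0$. Serre duality identifies this space:
\begin{equation*}
\Hom_\cat{C}(X[-d],X)=\Hom_\cat{C}(X,X[d])\cong\Hom_\cat{C}(X,X)^\vee .
\end{equation*}
Hence $\Hom_\cat{C}(X,X)=0$, so $\id_X=0$ and $X=0$, a contradiction.

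For 2), the argument is dual. Let $t$ be a \textbf{t}-structure and $0\neq X\in\heart_t$. Since $d<0$, we have $X[d]\in\cat{D}^{t\geq 0}[-d]\subseteq\cat{D}^{t\geq 1}=\cat{D}^{t\geq 0}[-1]$, because $-d\geq 1$ and $\cat{D}^{t\geq 0}$ is closed under negative shifts. Together with $X\in\cat{D}^{t\leq 0}$, the axiom $\cat{D}^{t\leq 0}\perp\cat{D}^{t\geq 0}[-1]$ gives $\Hom_\cat{C}(X,X[d])=0$. Serre duality gives $\Hom_\cat{C}(X,X[d])\cong\Hom_\cat{C}(X,X)^\vee$, so again $\id_X=0$ and $X=0$.

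There is no real obstacle here. The only care needed is getting the shift conventions right: for weight structures the orthogonality runs from $\cat{C}_{w>0}$ to $\cat{C}_{w\leq 0}$, so the shifted copy of $X$ must be placed on the correct side. We also use that the Calabi--Yau isomorphism $\Hom(X,Y)\cong\Hom(Y,X[d])^\vee$ applies to $Y=X$. Combined with \cref{zeroheart}, the lemma shows that such (co)hearts vanish and hence the structures are trivial.
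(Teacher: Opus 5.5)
Your proof is correct and is the paper's argument with the shift bookkeeping spelled out: the Calabi--Yau isomorphism $\Hom_\cat{C}(X,X)\cong\Hom_\cat{C}(X,X[d])^\vee$ combined with the orthogonality axiom, which kills the right-hand side when $X$ lies in a coheart and $d>0$, or in a heart and $d<0$. One sign slip in part 2: from $X\in\cat{D}^{t\geq 0}$ you get $X[d]\in\cat{D}^{t\geq 0}[d]=\cat{D}^{t\geq -d}\subseteq\cat{D}^{t\geq 1}$, whereas $\cat{D}^{t\geq 0}[-d]=\cat{D}^{t\geq d}$ as you wrote it is not contained in $\cat{D}^{t\geq 1}$ (because $[-d]$ is a positive shift), but with $[d]$ in place of $[-d]$ that step and the rest go through.
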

			\begin{proof}
				For $X\in\cat{C}$ we have
				\begin{equation*}
					\Hom_\cat{C}(X,X)\cong\Hom_\cat{C}(X,X[d])^\vee.
				\end{equation*}
				If $d>0$ and $X$ lies in a coheart (resp.~if $d<0$ and $X$ lies in a heart), then the right-hand side vanishes and therefore $X=0$.
			\end{proof}
			This immediately rules out non-trivial weight structures (resp.~\textbf{t}-structures) on $\cat{T}$ for $k>0$ (resp.~$k<0$):
			\clearpage
			\begin{cor}
				\label{nostruct}
				Assume $n>0$.
				\begin{enumerate}
					\item For $k>0$, any weight structure on $\cat{T}$ is trivial.
					\item For $k<0$, any \textbf{t}-structure on $\cat{T}$ is trivial.
				\end{enumerate}
			\end{cor}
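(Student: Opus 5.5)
This is a direct combination of \cref{plikecy}, \cref{cyobstruction} and \cref{zeroheart}; no new construction is needed. First I record the Calabi--Yau dimension: by \cref{plikecy}, $\cat{T}$ is $nk$-Calabi--Yau. Since $n>0$, the sign of $d=nk$ equals the sign of $k$. So $d>0$ when $k>0$, and $d<0$ when $k<0$.

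For 1), suppose $k>0$ and let $w$ be any weight structure on $\cat{T}$. Then $d=nk>0$, and \cref{cyobstruction}~1) gives $\coheart_w=\{0\}$. By \cref{zeroheart}~1), $w$ is one of the two trivial weight structures $(\cat{T},\{0\})$ or $(\{0\},\cat{T})$.

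For 2), suppose $k<0$ and let $t$ be any \textbf{t}-structure on $\cat{T}$. Then $d=nk<0$, and \cref{cyobstruction}~2) gives $\heart_t=\{0\}$. By \cref{zeroheart}~2), $t$ is trivial.

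The only point needing care is the hypothesis $n>0$: for $n=0$ we would have $d=0$ and the obstruction in \cref{cyobstruction} would say nothing. This is consistent with \cref{casen0}, where both standard structures exist. I also note that the case $k=0$ is deliberately excluded, since there $d=0$ and the non-existence of \textbf{t}-structures for $k=0$ must be proved separately. I expect no real obstacle here; the substantive work has already been done in \cref{zeroheart}, where the Calabi--Yau property is used to split the weight (resp.~\textbf{t}-) decomposition of the indecomposable object $P$.
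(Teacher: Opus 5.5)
Your proposal is correct and is exactly the paper's argument. Since $\cat{T}$ is $nk$-Calabi--Yau and $nk\neq 0$ has the sign of $k$, \cref{cyobstruction} forces the coheart (for $k>0$) or the heart (for $k<0$) to vanish, and \cref{zeroheart} then gives triviality.
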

			\begin{proof}
				$\cat{T}$ is $nk$-Calabi--Yau and $nk\neq 0$, so the claim follows from \cref{cyobstruction} and \cref{zeroheart}.
			\end{proof}
		\subsection{The special case \texorpdfstring{$n>0$, $k=0$}{n>0, k=0}}
			Let $\cat{T}$ be the triangulated category generated by a $\proj^n[0]$-object $P$, with $n>0$.
			In this case $\cat{T}$ is $0$-Calabi--Yau, and admits the standard weight structure $(\cat{T}_{\geq 0},\cat{T}_{\leq 0})$.
			
			We use the model provided by \cref{moritamodel}: we have $\cat{T}\cong\perf(A)=\Kb(\rprojfg A)$, where $A=\kk[x]/(x^{n+1})$.
			In this concrete model, the $\proj^n[0]$-object is $P=A$.
			\begin{lemma}
				\label{pn0homs}
				Let $X\in\cat{T}$ be indecomposable and suppose $X\neq P[i]$ for all $i\in\ints$.
				Then $\Hom_\cat{T}(X,X[>0])\neq 0$ and $\Hom_\cat{T}(X,X[<0])\neq 0$.
			\end{lemma}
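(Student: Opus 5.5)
We work in the model $\cat{T}\cong\Kb(\rprojfg A)$ with $A=\kk[x]/(x^{n+1})$ concentrated in degree $0$, given by \cref{moritamodel}. The plan is to produce an explicit non-zero morphism $X\to X[m]$ for some $m>0$ from a minimal model of $X$. The other inequality then follows from the Calabi--Yau property: by \cref{plikecy}, $\cat{T}$ is $0$-Calabi--Yau, so $\Hom_\cat{T}(X,X[-m])\cong\Hom_\cat{T}(X,X[m])^\vee\neq 0$.

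First, replace $X$ by a minimal complex. Since $A$ is local with radical $\mathfrak{m}=(x)$, any bounded complex of finitely generated free $A$-modules is homotopy equivalent to one whose differentials have all matrix entries in $\mathfrak{m}$. To see this, note that if some entry of a differential is a unit, we can split off a contractible summand $A\xrightarrow{\ \id\ }A$, and we repeat until no unit entry remains. So we may assume $X$ is minimal, with $X^a\neq 0\neq X^b$ and $X^i=0$ for $i\notin[a,b]$. If $a=b$, then $X\cong A^m[-a]$, and indecomposability forces $m=1$, i.e.\ $X\cong P[-a]$, which is excluded. Hence $a<b$; set $m=b-a>0$.

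Next, define a morphism $f\colon X\to X[m]$ whose only non-zero component is $f^a=g\colon X^a\to X^b=(X[m])^a$. Here $g$ is the composite of the projection of $X^a$ onto one free summand $A$ with the inclusion of $A$ as a free summand of $X^b$. This is automatically a chain map. In degree $a$ the required identity reads $d_{X[m]}^a f^a=\pm f^{a+1}d_X^a$, and both sides vanish because $X^{b+1}=0$ and $f^{a+1}=0$. In degree $a-1$ both sides vanish because $X^{a-1}=0$, and all other degrees are trivial.

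The key step is that $f$ is not null-homotopic. A homotopy would give $g=\pm d_X^{b-1}h^a\pm h^{a+1}d_X^a$ with $h^a\colon X^a\to X^{b-1}$ and $h^{a+1}\colon X^{a+1}\to X^b$. By minimality, both terms have all matrix entries in $\mathfrak{m}$. However, $g$ has an entry equal to $1$, which is a contradiction. Hence $\Hom_\cat{T}(X,X[m])\neq 0$ with $m>0$, and the Calabi--Yau duality above gives the negative shift.

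The only point requiring care is the reduction to minimal complexes. This is standard for bounded complexes of projectives over a local (or semiperfect) ring, and it preserves both indecomposability and the condition $X\neq P[i]$.
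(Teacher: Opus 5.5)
Your proof is correct and follows essentially the same argument as the paper. Both reduce to a complex with radical differentials, map the lowest term to the highest via one free summand, rule out a null-homotopy because it would force that component into the radical, and get the negative shift from the $0$-Calabi--Yau property; the only difference is that you sketch the Gaussian-elimination reduction to minimal complexes yourself, where the paper cites Krause's Prop.~4.3.18.
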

			\begin{proof}
				Since $\cat{T}$ is $0$-Calabi--Yau, it suffices to show that $\Hom_\cat{T}(X,X[>0])\neq 0$.
				By shifting, we can without loss of generality write $X$ as a chain comples of finitely generated projective $A$-modules $X=(X^{-r}\to X^{1-r}\to\dots\to X^0)$ with $X^{-r}\neq 0$ and $X^0\neq 0$, where $r\geq 1$ since $X\neq P$.
				Each $X^j$ is a direct sum of copies of $P=A$.
				Without loss of generality we can assume that $X$ is has no contractible summand, which by \cite[Prop.~4.3.18]{KrauseHomologicalTheory} means that each differential $d\: X^j\to X^{j+1}$ is a radical morphism.			
				
				Consider the morphism of chain complexes $f\: X\to X[r]$ for which the (by degree reasons) only non-zero component $f^{-r}\: X^{-r}\to (X[r])^{-r}=X^0$ is given by $f^{-r}=\iota\pi\: X^{-r}\tto P\hookto X^0$, where $\iota$ and $\pi$ denote the inclusion (resp.~projection) of a direct summand $P$ of $X^0$ resp.~$X^{-r}$.
				Then $f$ is not null-homotopic: if $f=d(h)$ for some $h\: X\to X[r-1]$, then in particular $f^{-r}=dh+hd$ would be radical since each differential of $X$ is.
				But we have $\pi f^{-r}\iota=\id_P$, and the identity is not a radical morphism.
			\end{proof}
			\begin{cor}\leavevmode
				\label{casek0}
				\begin{enumerate}
					\item There is no non-trivial \textbf{t}-structure on the triangulated category generated by a $\proj^n[0]$-object.
					\item Up to shift, the only non-trivial weight structure on the triangulated category generated by a $\proj^n[0]$-object is the one defined by the $\proj^n[0]$-object.
				\end{enumerate}
			\end{cor}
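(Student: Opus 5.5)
For part 1, I would argue directly from \cref{pn0homs} and \cref{zeroheart}. Let $t$ be a non-trivial \textbf{t}-structure on $\cat{T}$. By \cref{zeroheart} the heart $\heart_t$ contains some $0\neq X$. Hearts are closed under direct summands, because $\cat{D}^{t\leq 0}$ and $\cat{D}^{t\geq 0}$ are defined by orthogonality. Since $\cat{T}=\Kb(\rprojfg A)$ is Krull--Schmidt, we may therefore assume $X$ is indecomposable.

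There are now two cases. If $X\neq P[i]$ for all $i$, then \cref{pn0homs} gives $\Hom_\cat{T}(X,X[<0])\neq 0$, which contradicts the heart axiom $\Hom(\heart_t,\heart_t[<0])=0$. If instead $X=P[i]$, then $\Hom_\cat{T}(P[i],P[i])=\End^0(P)=A$ is still fine. However, the $\proj^n[0]$ condition with $n>0$ gives $x\in\End^0(P)$, and this is not enough for a contradiction in degree $0$. So for this case I would use the Calabi--Yau obstruction instead. It does not apply, since $d=0$, so I would argue differently as follows.

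The heart is abelian and $P[i]\in\heart_t$ has $\Ext^1_{\heart_t}(P[i],P[i])=\Hom(P,P[1])=0$. Consider the nilpotent endomorphism $x$ of $P[i]$. Its kernel and cokernel exist in $\heart_t$, and they fit into triangles in $\cat{T}$. The cone of $x\colon P\to P$ is $\mathrm{Cone}(x)$, an indecomposable object not isomorphic to any $P[j]$ (its cohomology is $\kk\oplus\kk[1]$). Since $x$ is a morphism in the abelian heart, its cone has $t$-cohomology $\ker x[1]$ and $\operatorname{coker}x$, with a triangle $\ker x[1]\to\mathrm{Cone}(x)\to\operatorname{coker} x$. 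Now $x$ is neither mono nor epi, since $x$ is nilpotent and $x^{n}\neq0$ gives non-invertibility. Hence $\ker x$ and $\operatorname{coker}x$ are nonzero objects of $\heart_t$. Their indecomposable summands are again either shifts of $P$ or are excluded by \cref{pn0homs}, so they are sums of copies of $P[i']$ for various $i'$ with $P[i']\in\heart_t$. Because $\Hom(P,P[j])=0$ for $j\neq 0$, the only shift $P[i']$ lying in $\heart_t$ is $P[i]$ itself: if $P[i],P[i']\in\heart_t$ with $i'\neq i$, then one would lie in $\heart_t[\neq0]$, and $\heart_t\cap\heart_t[m]=0$ for $m\neq0$. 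So $\ker x\cong P[i]^a$ and $\operatorname{coker}x\cong P[i]^b$. Comparing dimensions of total cohomology $H^*(-)$ in the triangle $0\to\ker x\to P\to P\to\operatorname{coker}x\to0$ (lengths in the finite-length category generated by $P[i]$) gives $a=b$ with $a\dim A-\dim A+\dim A-b\dim A=0$, hence forces $x$ to be an isomorphism or zero as a map between sums of copies of $P[i]$ in an abelian category where $P[i]$ is the unique simple-like object. More cleanly, the full subcategory $\add P[i]\subseteq\heart_t$ would be abelian, so $A$ would be semisimple, which is impossible since $n>0$. This contradiction shows there is no non-trivial \textbf{t}-structure.

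For part 2, let $w$ be a non-trivial weight structure. By \cref{zeroheart} the coheart $\coheart_w$ contains a nonzero indecomposable $X$, using that the coheart is Karoubi-closed. The coheart satisfies $\Hom(\coheart_w,\coheart_w[>0])=0$, so \cref{pn0homs} forces $X\cong P[i]$ for some $i$. Then $P[i]$ is a silting object generating $\cat{T}$, since $\Hom(P,P[>0])=0$. A bounded weight structure is determined by its coheart, and $P[i]\in\coheart_w$ forces $\cat{T}_{w\geq 0}\supseteq\Kar\extclos\{P[i+j]\mid j\leq 0\}$ and similarly for $\cat{T}_{w\leq 0}$. These contain the aisles of the shifted standard weight structure, and both pairs are orthogonal decompositions, so they coincide. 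Hence $w$ is the shift by $i$ of the standard weight structure. The main obstacle is the $P[i]$ case in part 1, which I would handle via the semisimplicity argument above.
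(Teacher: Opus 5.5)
Your argument is correct and is essentially the paper's: in both parts \cref{pn0homs}, together with Krull--Schmidt, closure of (co)hearts under direct summands and the fact that at most one shift of $P$ can lie in a heart, forces the (co)heart to be $\add(P[i])$ for a single $i$. For a \textbf{t}-structure this is impossible because $\add(P[i])\simeq\rprojfg\kk[x]/(x^{n+1})$ is not abelian, and for a weight structure it yields the shifted standard aisles by the orthogonality argument.

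The detour through $\ker x$, $\operatorname{coker}x$ and the cohomology dimension count is unnecessary, and the count as written does not by itself produce the contradiction. The paper instead obtains semisimplicity of $\heart_t$ directly from the $0$-Calabi--Yau property (here equivalently from $\Hom_\cat{T}(P,P[1])=0$, which you already note). That makes your unproved step ``$\add(P[i])$ abelian implies $A$ semisimple'' immediate via Schur's lemma.
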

			\begin{proof}\leavevmode
				\begin{enumerate}
					\item Let $t=(\cat{T}^{t\leq 0},\cat{T}^{t\geq 0})$ be a \textbf{t}-structure on $\cat{T}$.
						If $\heart_t=\{0\}$, then $t$ is trivial by \cref{zeroheart}.
						If $\heart_t\neq\{0\}$, then by the $0$-Calabi--Yau property we have
						\begin{equation*}
							\Ext_{\heart_t}^1(X,Y)\cong\Hom_\cat{T}(X,Y[1])\cong\Hom_\cat{T}(Y,X[-1])^\vee=0
						\end{equation*}
						for any $X,Y\in\heart_t$, and hence $\heart_t$ is semisimple.
						By \cref{pn0homs} the only indecomposable object in $\heart_t$ can be $P$ (without loss of generality, by shifting $t$).
						But this means $\heart_t=\add(P)\cong\rprojfg\End_\cat{T}(P)\cong\rprojfg \kk[x]/(x^{n+1})$ which does not have all (co)kernels, a contradiction.
					\item Let $w=(\cat{T}_{w\geq 0},\cat{T}_{w\leq 0})$ be a weight structure on $\cat{T}$.
						If $\coheart_w=\{0\}$, then $w$ is trivial by \cref{zeroheart}.
						If $\coheart_w\neq\{0\}$, then by \cref{pn0homs} the only indecomposable objects in $\coheart_w$ can be $P$ (without loss of generality, by shifting).
						But then $\cat{T}_{\leq 0}=\Kar\extclos\{P[i]\mid i\geq 0\}\subseteq\cat{T}_{w\leq 0}$ and $\cat{T}_{\geq 0}=\Kar\extclos\{P[i]\mid i\leq 0\}\subseteq\cat{T}_{w\geq 0}$, so $w$ must be the standard weight structure.\qedhere
				\end{enumerate}
			\end{proof}
		\subsection{The case \texorpdfstring{$n>0$, $k>0$}{n>0, k>0}}
			If $n>0$ and $k>0$, then $\cat{T}$ admits the standard \textbf{t}-structure $(\cat{T}^{\leq 0},\cat{T}^{\geq 0})$ defined by $\heart=\extclos_\cat{T}(P)$.
			This is bounded with finite-length heart, and $P$ is the unique simple object of $\heart$.
			
			The indecomposable objects in the standard heart can be listed explicitly.
			Surprisingly, the case $n=1$ (i.e.~when $P$ is spherical) considered in \cite{HolmJorgensenYang} is the most complicated.
			\begin{lemma}\leavevmode
				\label{indecinstdheart}
				\begin{enumerate}
					\item If $k>1$, then $\heart=\add(P)$.
					\item If $k=1$ and $n>1$, then $\heart=\add(\{P,X_2\})$ where $X_2$ is defined by the unique (up to isomorphism) non-split triangle $P\to X_2\to P\longto{x}P[1]$.
					\item If $k=1$ and $n=1$, then $\heart=\add(\{X_i\mid i\geq 1\})$, where $X_i$ is uniserial with $i$ simple subquotients $P$.
				\end{enumerate}
			\end{lemma}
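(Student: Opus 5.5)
The heart $\heart$ is a finite-length abelian category with a unique simple object $P$. Its self-extensions are
\[
\Ext^1_\heart(P,P)\cong\Hom_\cat{T}(P,P[1]),
\]
which is the degree-$1$ part of $\kk[x]/(x^{n+1})$ with $|x|=k$. So every object of $\heart$ is an iterated extension of copies of $P$, and the structure of $\heart$ is governed by $\Ext^1$ and $\Ext^2$ between these. The plan is to compute these spaces from the graded algebra in each case, and then classify indecomposables by induction on Loewy/composition length.

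\emph{Case $k>1$.} Here $\Hom_\cat{T}(P,P[1])=0$, so $\Ext^1_\heart(P,P)=0$. By induction on length, every object of $\heart$ is a finite direct sum of copies of $P$: an extension of $P$ by a sum of copies of $P$ is classified by $\Ext^1(P,P^{m})=0$, so it splits. Hence $\heart=\add(P)$.

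\emph{Case $k=1$, $n>1$.} Now $\Ext^1_\heart(P,P)=\kk x$ is one-dimensional, so there is a unique non-split extension $X_2$. I will show that no indecomposable of length $\geq 3$ exists.

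First compute the Hom spaces involving $X_2$ by applying $\Hom(P,-)$ and $\Hom(-,P)$ to the triangle $P\to X_2\to P\xrightarrow{x}P[1]$, using the graded algebra $\kk[x]/(x^{n+1})$ with $|x|=1$. Since $x^2\neq 0$ (as $n>1$), the connecting maps given by multiplication with $x$ are injective from degree $1$ to degree $2$. The expected result is
\[
\Hom(P,X_2[1])=0=\Hom(X_2,P[1]).
\]
Concretely, in the long exact sequence
\[
\Hom(P,P[1])\to\Hom(P,X_2[1])\to\Hom(P,P[1])\xrightarrow{x\cdot}\Hom(P,P[2]),
\]
the last map is injective, and the first map is killed because the preceding connecting map $\Hom(P,P)\xrightarrow{x}\Hom(P,P[1])$ is surjective.

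Given this, any object of length $3$ with a subobject or quotient $X_2$ splits. Moreover, every indecomposable of length $\geq 2$ has $X_2$ as a subquotient of its radical/socle filtration. An induction on length, adding one copy of $P$ at a time, then shows that every indecomposable is $P$ or $X_2$. To organise this, take an indecomposable $Y$ of length $\ell\geq 3$ with a simple subobject $P\hookrightarrow Y$; by induction the quotient is a direct sum of copies of $P$ and $X_2$, and the vanishing of the Ext groups forces the extension to be supported on $P$-summands, giving $X_2$ or a split object.

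\emph{Case $k=1$, $n=1$.} This is the spherical case of \cite{HolmJorgensenYang}. Here $x^2=0$, so the connecting maps vanish. Iterating the non-split extension gives uniserial objects $X_i$ for every $i\geq 1$; these correspond to modules over $\kk[\![t]\!]$ of finite length, since $\heart$ is equivalent to finite-length modules over $\kk[\![t]\!]$ because $\Ext^2_\heart(P,P)$ detected via $\Hom(P,P[2])=0$. I would cite their result directly, or argue that $\heart$ is a length category with one simple object, $\dim\Ext^1=1$ and $\Ext^2=0$, hence equivalent to nilpotent finite-dimensional $\kk[t]$-modules, whose indecomposables are the $\kk[t]/(t^i)$.

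The main obstacle is the length-$\geq 3$ induction in case 2. It needs a careful computation of $\Ext^1$ between $X_2$ and $P$ in both directions, and in particular a check that $\Ext^1_\heart(X_2,X_2)$ does not produce new indecomposables.
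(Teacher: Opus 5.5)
Your proposal is correct. For $k>1$ and for $k=1$, $n>1$ it is the paper's argument. The paper likewise obtains $\Hom_\cat{T}(P,X_2[1])=0=\Hom_\cat{T}(X_2,P[1])$ from the two long exact sequences and stops there; your induction on length fills in the final step. That induction only needs $\Ext^1_\heart(X_2,P)=0$ and $\End_\cat{T}(P)=\kk$. An extension of $P^{\oplus a}\oplus X_2^{\oplus b}$ by $P$ has class in $\Ext^1_\heart(P,P)^{\oplus a}\cong\kk^a$, so it is $Y_1\oplus X_2^{\oplus b}$. The $\mathrm{GL}_a(\kk)$-action then gives $Y_1\cong X_2\oplus P^{\oplus(a-1)}$ or $P^{\oplus(a+1)}$. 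So the check on $\Ext^1_\heart(X_2,X_2)$ that you flag as the main obstacle is not needed.

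For $n=k=1$ your route is genuinely different. The paper stays inside $\cat{T}$ and constructs $X_{i+1}$ from $X_i$ by the unique non-split triangles $P\to X_{i+1}\to X_i\to P[1]$ and $X_i\to X_{i+1}\to P\to X_i[1]$. It uses $\Hom_\cat{T}(P,X_i[1])\cong\kk\cong\Hom_\cat{T}(X_i,P[1])$, and the octahedral axiom to identify the two constructions. You instead use that $\Ext^2_\heart\to\Hom_\cat{T}(-,-[2])$ is injective (a standard fact from \cite{BBD}) to get $\Ext^2_\heart(P,P)=0$. You then invoke Gabriel's structure theory of Hom-finite length categories (one simple $S$, $\End(S)=\kk$, $\dim\Ext^1(S,S)=1$, $\Ext^2(S,S)=0$) to identify $\heart$ with finite-length $\kk[\![t]\!]$-modules. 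This costs heavier machinery. In exchange it yields in one stroke that every object of $\heart$ is a sum of the $X_i$ and that the uniserial object of each length is unique, which the paper's inductive construction leaves implicit.

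One small correction: for $n=1$ it is not true that ``the connecting maps vanish''. The map $\Hom_\cat{T}(P,P)\to\Hom_\cat{T}(P,P[1])$ is still an isomorphism; only the map into $\Hom_\cat{T}(P,P[2])=0$ is zero. That is exactly what gives $\Hom_\cat{T}(P,X_i[1])\cong\kk$ and lets the extensions be iterated.
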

			\begin{proof}\leavevmode
				\begin{enumerate}
					\item For $k>1$ we have $\Ext_\heart^1(P,P)=\Hom_\cat{T}(P,P[1])=0$, and hence $\heart=\add(P)$.
					\item For $k=1$ we have $\Ext_\heart^1(P,P)=\Hom_\cat{T}(P,P[1])\cong\kk$, so there is a unique (up to isomorphism) non-split triangle
						\begin{equation*}
							P\to X_2\to P\longto{x}P[1].
						\end{equation*}
						By applying $\Hom_\cat{T}(P,-)$ to this triangle we obtain the long exact sequence
						\begin{equation*}
							\Hom_\cat{T}(P,P[i])\to\Hom_\cat{T}(P,X_2[i])\to \Hom_\cat{T}(P,P[i])\to\Hom_\cat{T}(P,P[i+1]),
						\end{equation*}
						where the connecting homomorphism $\Hom_\cat{T}(P,P[i])\to\Hom_\cat{T}(P,P[i+1])$ is given by postcomposition with $x\: P\to P[1]$.
						Since $P$ is a $\proj^n[1]$-object, we have $\Hom_\cat{T}(P,P[i])=\kk$ for $0\leq i\leq n$ and $\Hom_\cat{T}(P,P[i])=0$ otherwise, and moreover the connecting homomorphism is an isomorphism for $0\leq i\leq n-1$.
						From this (and the analogous argument using the long exact sequence obtained by applying $\Hom_\cat{T}(-,P)$) it follows that
						\begin{align*}
							\Hom_\cat{T}(P,X_2[i])&\cong\begin{cases}
								\kk&\text{if $i=0$ or $i=n$},\\
								0&\text{otherwise},
							\end{cases}\\
							\Hom_\cat{T}(X_2,P[i])&\cong\begin{cases}
								\kk&\text{if $i=0$ or $i=n$},\\
								0&\text{otherwise}.
							\end{cases}
						\end{align*}
						Hence for $n>1$, there is no non-split extension between $X_2$ and the unique simple object $P$, and thus $\heart=\add(\{P,X_2\})$.
					\item For $k=1$ and $n=1$, the objects $X_i$ for $i\geq 1$ are constructed inductively via unique non-split triangles $P\to X_{i+1}\to X_i\to P[1]$ and $X_i\to X_{i+1}\to P\to X_i[1]$, starting from $X_1=P$.
						Suppose that $X_i$ has been constructed.
						As in the proof of 2), the long exact sequences obtained by applying $\Hom_\cat{T}(P,-)$ and $\Hom_\cat{T}(-,P)$ show that $\Hom_\cat{T}(P,X_i[1])\cong\kk$ and $\Hom_\cat{T}(X_i,P[1])\cong\kk$.
						That the unique non-split triangles resulting from these extensions define the same object $X_{i+1}$ can be seen via the octahedral axiom.\qedhere
				\end{enumerate}
			\end{proof}
			\begin{lemma}
				\label{neghomforposk}
				Let $X\in\cat{T}$ with $X\in\cat{T}^{\geq 0}\cap\cat{T}^{\leq r}$ and $X\notin\cat{T}^{>0}$, $X\notin\cat{T}^{<r}$.
				If $r>0$, then $\Hom_\cat{T}(X,X[-r])\neq 0$.
			\end{lemma}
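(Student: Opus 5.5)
The idea is to use the standard \textbf{t}-structure's cohomology functors. Write $H^i=H^i_\heart$ for the cohomology with respect to $(\cat{T}^{\leq 0},\cat{T}^{\geq 0})$, so $H^i(X)=(t_{\leq i}t_{\geq i}X)[i]\in\heart$. The hypotheses say that $X$ has cohomology only in degrees $0,\dots,r$, and that $H^0(X)\neq 0$ and $H^r(X)\neq 0$. Indeed, $X\notin\cat{T}^{>0}$ together with $X\in\cat{T}^{\geq 0}$ forces $H^0(X)\neq 0$, and similarly for $H^r$.

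The natural candidate for a non-zero morphism $X\to X[-r]$ is the composite
\begin{equation*}
X\to t_{\geq r}X = H^r(X)[-r]\longto{g[-r]} H^0(X)[-r]= (t_{\leq 0}X)[-r]\to X[-r],
\end{equation*}
where $g\colon H^r(X)\to H^0(X)$ is a non-zero morphism in $\heart$. Such a $g$ exists because $\heart$ has finite length with unique simple object $P$. So $H^r(X)$ has $P$ as a quotient, $H^0(X)$ has $P$ as a subobject, and we take $g$ to be $H^r(X)\tto P\hookto H^0(X)$. The truncation maps exist since $X\in\cat{T}^{\leq r}$ gives $t_{\geq r}X\cong H^r(X)[-r]$, and $X\in\cat{T}^{\geq 0}$ gives $t_{\leq 0}X\cong H^0(X)$.

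It remains to show the composite $\phi$ is non-zero. I would apply $H^0$ to the shifted map $\phi[r]\colon X[r]\to X$, or equivalently apply $H^r$ to $\phi$ after shifting appropriately. The first map $X\to H^r(X)[-r]$ induces an isomorphism on $H^r$. The last map $H^0(X)[-r]\to X[-r]$ induces an isomorphism on $H^r$ of the target, because $H^r(X[-r])=H^0(X)$. Hence $H^r(\phi)$ identifies with $g\neq 0$, so $\phi\neq 0$. This uses only cohomological functoriality, and it works because $r>0$ makes the degrees separate. In the case $r=0$ the statement would be vacuous, but it is excluded anyway.

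The main obstacle is mostly bookkeeping: checking that the truncation triangles give the identifications $t_{\geq r}X\cong H^r(X)[-r]$ and $t_{\leq 0}X\cong H^0(X)$ with the correct shift conventions, and that the cohomology functors send the truncation maps to isomorphisms in the relevant degree. I would verify these carefully using the adjunction properties of $t_{\leq 0}$ and $t_{>0}$ from \cite[Prop.~1.3.3]{BBD}. The fact that $P$ is the unique simple object of the finite-length heart provides both the simple quotient and the simple subobject needed to build $g$.
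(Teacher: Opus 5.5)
Your proof is correct and is essentially the paper's argument. The paper uses the truncation adjunctions to identify $\Hom_\cat{T}(X,X[-r])\cong\Hom_\cat{T}(\tau_{\geq r}X,\tau_{\leq r}(X[-r]))$, i.e.\ with $\Hom_\heart(H^r(X),H^0(X))$, which is non-zero because $\heart$ has finite length with unique simple object $P$; you instead build the corresponding composite explicitly and detect it with $H^r$. (Minor point: your argument never actually uses $r>0$, and for $r=0$ the statement is trivially true rather than vacuous.)
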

			\begin{proof}
				Since $X\in\cat{T}^{\geq 0}\cap\cat{T}^{\leq r}$, we have
				\begin{equation*}
					\Hom_\cat{T}(X,X[-r])\cong\Hom_\cat{T}(X,\tau_{\leq r}(X[-r]))\cong\Hom_\cat{T}(\tau_{\geq r}X,\tau_{\leq r}(X[-r]))
				\end{equation*}
				where $\tau_{\leq r}$ and $\tau_{\geq r}$ are the truncation functors for the standard \textbf{t}-structure.
				Now both $\tau_{\geq r}X$ and $\tau_{\leq r}(X[-r])$ are non-zero and lie in $\heart[-r]$.
				Since $\heart$ is finite-length with the unique simple object $P$, it follows that $\Hom_\cat{T}(\tau_{\geq r}X,\tau_{\leq r}(X[-r]))\neq 0$.
			\end{proof}
			\begin{prop}
				\label{casekpos}
				If $n>0$ and $k>0$, then the only non-trivial \textbf{t}-structures on $\cat{T}$ are the shifts of the standard \textbf{t}-structure.
			\end{prop}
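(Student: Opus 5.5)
We prove \cref{casekpos} by showing that a non-trivial \textbf{t}-structure must contain a shift of $P$ in its heart, and then that this pins the \textbf{t}-structure down to a shift of the standard one.

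Let $t=(\cat{T}^{t\leq 0},\cat{T}^{t\geq 0})$ be a non-trivial \textbf{t}-structure on $\cat{T}$. By \cref{zeroheart} its heart $\heart_t$ is non-zero, so we may choose an indecomposable $0\neq X\in\heart_t$, using that hearts are Karoubi-closed.

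First we locate $X$ with respect to the standard \textbf{t}-structure. Since the standard \textbf{t}-structure is bounded, there are integers $a\leq b$ with $X\in\cat{T}^{\geq a}\cap\cat{T}^{\leq b}$, $X\notin\cat{T}^{>a}$ and $X\notin\cat{T}^{<b}$. After shifting $t$ we may assume $a=0$, and we set $r=b$. If $r>0$, then \cref{neghomforposk} gives $\Hom_\cat{T}(X,X[-r])\neq 0$. But $X$ and $X[-r]$ both lie in $\heart_t$ up to the shift, so $\Hom_\cat{T}(X,X[-r])=0$ because $\heart_t\perp\heart_t[<0]$, which follows from $\cat{T}^{t\leq 0}\perp\cat{T}^{t\geq 0}[-1]$. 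Hence $r=0$, and $X\in\heart$.

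Next we reduce from $X$ to $P$. Since $\heart$ has finite length with unique simple object $P$, there is a non-zero morphism $X\to P$ in $\heart$. The aim is to show $P\in\heart_t$ itself. The cleanest route avoids enumerating $\heart$ altogether. Since $X\in\heart_t$ and $\heart=\extclos(P)\subseteq\extclos\{P\}$, we have $X\in\extclos\{P[i]\mid i\geq 0\}\cap\extclos\{P[i]\mid i\leq 0\}$. Now apply $t$-truncations to $P$. Suppose $P$ has $t$-cohomology in more than one degree. Then $P$ is built from its $t$-cohomology objects, and so is every object of $\heart$. Since $X\in\heart_t$ is built from copies of $P$ by extensions, comparing $t$-amplitude gives a contradiction, provided we know that $P$ has some $t$-cohomology in a degree $\neq 0$.

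It is more robust to run the \cref{neghomforposk} argument with the roles of the two \textbf{t}-structures swapped. Let $P$ have $t$-amplitude $[a',b']$ with $a'<b'$. The $t$-analogue of \cref{neghomforposk} needs a finite-length heart with a unique simple object, which $\heart_t$ is not known to have. So instead we use \cref{indecinstdheart} directly.

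For $k>1$, \cref{indecinstdheart} gives $\heart=\add(P)$, so $X\cong P$. For $k=1$ and $n>1$, we have $X\in\{P,X_2\}$. In the case $X=X_2$, the computation in that proof gives $\Hom(X_2,X_2[n])\neq0$, which is compatible with $X_2$ lying in a heart. However, $\heart_t$ is abelian and contains $X_2$, so the morphism $X_2\to P\to X_2$ (the composition of the projection and the inclusion) is a non-zero nilpotent endomorphism. Its kernel and image in $\heart_t$ are objects of $\cat{T}$. We then check that these are copies of $P$, by computing $\Hom_\cat{T}(P,-)$ via the triangle, so that $P\in\heart_t$. For $k=1$ and $n=1$ this is \cite{HolmJorgensenYang}, or the same kernel argument applied to $X_i$. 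The expected main obstacle is exactly this step, namely extracting $P$ from $X_2$ or $X_i$ inside $\heart_t$.

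Finally, once $P\in\heart_t$, we conclude as in \cref{casek0}. We get $\cat{T}^{\leq0}=\extclos\{P[i]\mid i\geq0\}\subseteq\cat{T}^{t\leq0}$ and $\cat{T}^{\geq0}\subseteq\cat{T}^{t\geq0}$. Two \textbf{t}-structures with nested aisles and nested co-aisles coincide, since the aisle determines the co-aisle via $\perp$. Hence $t$ is the standard \textbf{t}-structure.
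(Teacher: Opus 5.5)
Your overall route is the paper's. \cref{zeroheart} gives an indecomposable $0\neq X\in\heart_t$, and \cref{neghomforposk} forces $X\in\heart$ after shifting. For $k>1$, \cref{indecinstdheart} gives $X\cong P$; for $k=1$ one extracts $P$ from a nilpotent endomorphism of $X$ inside the abelian category $\heart_t$. Your closing argument with nested aisles and co-aisles is fine. The ``cleanest route'' and ``swapped roles'' detours in your third paragraph, and the remark on $\Hom_\cat{T}(X_2,X_2[n])$, lead nowhere and should be cut.

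The genuine gap is the step you flag yourself. You assert that the kernel and image of $f\colon X\tto P\hookto X$, computed in $\heart_t$, are copies of $P$ ``by computing $\Hom_\cat{T}(P,-)$ via the triangle''. But the only triangles $\heart_t$ provides are $K\to X\to I\to K[1]$ and $I\to X\to Q\to I[1]$. Here $K$, $I$, $Q$ are the unknown kernel, image and cokernel in $\heart_t$, and nothing you have said relates them to $\heart$. This is not a formality: two hearts sharing an object can compute different kernels of the same morphism. For example, in the tilt of a heart at a torsion pair, a morphism between torsion objects with torsion-free kernel becomes a monomorphism.

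The missing observation is that $\heart_t\subseteq\heart$.
\begin{enumerate}
\item Your second step only used $X\in\heart_t$, so the same argument shows that every non-zero $Y\in\heart_t$ lies in $\heart[m]$ for some $m$.
\item Since $\heart$ has finite length with unique simple object $P$, we have $\Hom_\cat{T}(U,V)\neq 0$ for all non-zero $U,V\in\heart$, via $U\tto P\hookto V$. Hence $\Hom_\cat{T}(X,Y[-m])\neq 0$ and $\Hom_\cat{T}(Y,X[m])\neq 0$.
\item Since $\Hom_\cat{T}(\heart_t,\heart_t[j])=0$ for $j<0$, this forces $m=0$.
\end{enumerate}
Consequently the short exact sequences $0\to K\to X\to I\to 0$ and $0\to I\to X\to Q\to 0$ in $\heart_t$ are triangles with all vertices in $\heart$, hence short exact sequences in $\heart$. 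So $I$ is the image of $f$ computed in $\heart$, namely $P$, and therefore $P\in\heart_t$.

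This is exactly what is needed to justify the paper's one-line remark that (co)kernels in $\heart_t$ agree with those in $\heart$. It covers $X=X_2$ and $X=X_i$ uniformly, so you need no appeal to \cite{HolmJorgensenYang}. Since it applies to any non-zero $X\in\heart_t\subseteq\heart$, it even lets you bypass \cref{indecinstdheart} altogether, as you had hoped.
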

			\begin{proof}
				Let $t$ be a \textbf{t}-structure on $\cat{T}$.
				If $\heart_t=\{0\}$ then $t$ is trivial by \cref{zeroheart}, so assume that there is a non-zero indecomposable object $X\in\heart_t$.
				By shifting $X$ we can without loss of generality assume that with respect to the standard \textbf{t}-structure we have $X\in\cat{T}^{\geq 0}\cap\cat{T}^{\leq r}$ and $X\notin\cat{T}^{>0}$, $X\notin\cat{T}^{<r}$ for some $r\geq 0$.
				If $r>0$, then by \cref{neghomforposk} we have $\Hom_\cat{T}(X,X[-r])\neq 0$ and $-r<0$, a contradiction to $X$ being in the heart of a \textbf{t}-structure.
				
				If $r=0$ then $X$ is also in the heart of the standard \textbf{t}-structure.
				By \cref{indecinstdheart}, if $k>1$ then $X=P$, and thus $t$ is the standard \textbf{t}-structure.
				For $k=1$, $X$ is a uniserial self-extension of copies of $P$.
				Since short exact sequences in the heart of a \textbf{t}-structure are precisely the triangles in the ambient triangulated category, it follows that (co)kernels in $\heart_t$ agree with the (co)kernels in the heart $\heart$ of the standard \textbf{t}-structure.
				But in the standard heart, $P$ can be obtained as the kernel of the endomorphism of $X$ sending the copy of $X$ at the top to the copy one layer below.
				Hence if $X\in\heart_t$, then also $P\in\heart_t$, and therefore $t$ is the standard \textbf{t}-structure.
			\end{proof}
		\subsection{The case \texorpdfstring{$n>0$, $k<0$}{n>0, k<0}}
			The basic proof idea for $n>0$, $k<0$ is similar to the argument for $n>0$, $k=0$ in \cref{pn0homs}.
			However we cannot directly apply \cite[Prop.~4.3.18]{KrauseHomologicalTheory} in this setting.
			The following lemma serves as a replacement, and its proof is similar to that of \cite[Prop.~4.3.18]{KrauseHomologicalTheory}.
			
			By \cref{moritamodel} we have $\cat{T}=\perf(A)$ for $A=\kk[x]/(x^{n+1})$ with $|x|=k<0$.
			As a graded $A$-module, any $X\in\perf(A)$ can be written as $X=\bigoplus_{r\in\ints} P[-r]^{\oplus a_r}$ with only finitely many $a_r\neq 0$.
			The differential $d_X\: X\to X[1]$ is a morphism of graded $A$-modules, which we will write in matrix form with respect to such a chosen decomposition of $X$.
			For instance, we may consider $X=P\oplus P[-1]$ with differential $d=\begin{smallpmatrix}0&0\\1&0\end{smallpmatrix}\: X=P\oplus P[-1]\to P[1]\oplus P=X[1]$.
			\begin{lemma}
				\label{dgradicallemma}
				If $X$ is a perfect dg $A$-module without a contractible summand, then $d(X)\subseteq xX$.
			\end{lemma}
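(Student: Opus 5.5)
We prove the contrapositive: if $d(X)\not\subseteq xX$, then $X$ has a contractible summand of the form $\mathrm{Cone}(\id_{P[-r]})$ for some $r$.

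Choose a decomposition $X=\bigoplus_r P[-r]^{\oplus a_r}$ as graded $A$-modules and write $d$ as a matrix whose entries are graded module maps $P[-r]\to P[-s][1]$. Each entry is right multiplication by a homogeneous element of $A$, i.e.\ a scalar multiple of some $x^j$. The condition $d(X)\subseteq xX$ says exactly that no entry has a nonzero $x^0$-component. Since $x^0$ has degree $0$, such a unit entry can only occur from a summand $P[-r]$ to a summand $P[-(r-1)]$; there $d$ sends the generator $e$ of degree $r$ to an element of degree $r+1$ of $P[-r+1]$. (As $k<0$, other entries $x^j$ with $j\geq 1$ could also connect summands in various degrees, but only the degree-$0$ part of $A$ consists of units.)

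Suppose some entry is a unit, say $d_{21}=\lambda\neq 0$ from a summand $Q_1=P[-r]$ to a summand $Q_2=P[-r+1]$. Let $e$ be the generator of $Q_1$ and put $y=d(e)$. Then $y$ is a homogeneous element of degree $r+1$ whose $Q_2$-component is $\lambda e_2$ plus possibly terms in $xQ_2$; in either case it is a generator of $Q_2$ modulo radical terms. Mimicking Krause's argument, consider the dg submodule $C$ generated by $e$, which as a graded module is $eA\oplus yA$ with $d(e)=y$ and $d(y)=0$. It is isomorphic to the cone of $\id_{P[-r]}$, hence contractible, and is free as a graded module.

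Next we show $C$ is a direct summand of $X$ as a dg module.

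\begin{itemize}
\item \emph{Graded splitting.} Since the $Q_2$-coordinate of $y$ is a unit multiple of the generator, the map $Q_1\oplus Q_2\to X$ given by $(e,e_2)\mapsto(e,y)$ differs from the inclusion by a unitriangular change of basis. Replacing the basis element $e_2$ by $y$ is an automorphism of the free graded module $X$, because it is invertible modulo the radical and $A$ is local; Nakayama applies in each degree since $X$ is finitely generated. So $X=C\oplus X'$ as graded modules, with $X'$ spanned by the remaining basis elements.
\item \emph{Making the complement $d$-stable.} $X'$ need not be stable under $d$, so we correct it as in the homotopy-category argument. Let $p\colon X\to C$ be the graded projection. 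Choose a contracting homotopy $h$ of $C$, namely $h(y)=e$ and $h(e)=0$, so that $dh+hd=\id_C$. Then $\phi=d_C h p+h p\, d_X$ is a dg map $X\to C$ restricting to the identity on $C$. Checking that $\phi$ is a chain map is the one computation to do; it uses $d_X|_C=d_C$, which holds because $C$ is a dg submodule.
\item \emph{Conclusion.} With $\phi$ a chain retraction, $\ker\phi$ is a dg complement of $C$, so $X\cong C\oplus\ker\phi$ as dg modules. This exhibits a contractible summand and proves the contrapositive.
\end{itemize}

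The main obstacle is the graded splitting in the first bullet. Because $k<0$, several summands can share the same degree, and $d(e)$ can have $x^j$-contributions into many summands. The key point is that only the unit coefficient matters for the basis change, and the remaining terms lie in $xX$, so the change-of-basis matrix is invertible. This holds because the radical of the graded ring $A$ is $xA$, which is nilpotent.
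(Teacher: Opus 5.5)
Your proof is correct and is essentially the paper's argument. The paper likewise makes $v_0$ and $d(v_0)$ part of a homogeneous basis, and your retraction $\phi=dhp+hpd$ is a coordinate-free form of its base change $v_i\mapsto v_i-a_iv_0$ followed by the $d^2=0$ argument: $\phi(v_i)=a_iv_0$, so $\ker\phi$ is spanned by exactly those corrected generators.

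One harmless indexing slip: with your convention that the generator of $P[-r]$ sits in degree $r$, a unit entry of $d$ goes from $P[-r]$ to $P[-(r+1)]$, not to $P[-(r-1)]$.
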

			\begin{proof}
				Suppose that there is $v_0\in X$ such that $d(v_0)\notin xX$.
				For easier notation, by shifting we may assume without loss of generality that $|v_0|=0$.
				As graded $A$-modules, $v_0$ generates a summand $P$ of $X$, and $d(v_0)$ generates a summand $P[-1]$.
				Thus we may write $X=P\oplus P[-1]\oplus\bar{X}$ where $\bar{X}$ is a direct sum of copies of shifts of $P$.
				By construction the differential is then of the form $d=\begin{smallpmatrix}0&0&*\\1&0&*\\0&0&*\end{smallpmatrix}$.
				
				Choose a decomposition $\bar{X}=\bigoplus_{i=1}^m P[-r_i]$ and write $v_i$ for a generator of the $i$-th summand.
				Our goal is to perform a base change so that in this basis the right column of $d$ is zero except for the bottom entry.
				For this, write $d(v_i)=a_id(v_0)+*$ for some (homogeneous) $a_i\in A$, where $*$ denotes the components of $d(v_i)$ that are not in the fixed $P[-1]$.
				Then $v_i-a_iv_0$ also generates a summand $P[-r_i]$ of $\bar{X}$, and the $d(v_0)$-coefficient of $d(v_i-a_iv_0)$ vanishes (by construction).
				In terms of these generators of summands of $\bar{X}$, $d$ is thus of the form $\begin{smallpmatrix}0&0&*\\1&0&0\\0&0&*\end{smallpmatrix}$.
				Since $d^2=0$, the top right entry must moreover be $0$.
				Hence we have found a summand $(P\oplus P[-1],d=\begin{smallpmatrix}0&0\\1&0\end{smallpmatrix})$ of $X$, and this summand is contractible since $h=\begin{smallpmatrix}0&1\\0&0\end{smallpmatrix}\: P\oplus P[-1]\to P[-1]\oplus P[-2]$ is a null-homotopy for the identity.
			\end{proof}
			Recall from \cref{standardstructs} that $\cat{T}$ admits the standard weight structure with $\coheart=\add(P)$.
			We write $\omega_{>0}$ and $\omega_{\leq 0}$ for weight decompositions with respect to this.
			\begin{prop}
				\label{poshomfornegk}
				Let $X\in\cat{T}_{\geq r}\cap\cat{T}_{\leq 0}$ for some $r\leq 0$, such that $X\notin\cat{T}_{>r}$ and $X\notin\cat{T}_{<0}$.
				Then $\Hom_\cat{T}(X,X[-r])\neq 0$.
			\end{prop}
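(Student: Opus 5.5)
Mimic the proof of \cref{pn0homs}, with \cref{dgradicallemma} playing the role of \cite[Prop.~4.3.18]{KrauseHomologicalTheory}. Removing contractible summands does not change $X$ up to isomorphism in $\cat{T}$, so we may assume $X$ has none; then $d(X)\subseteq xX$ by \cref{dgradicallemma}. Write $X=\bigoplus_j P[-s_j]$ as a graded $A$-module. The standard weight structure is bounded with $\coheart=\add(P)$. Minimal models like this are compatible with weights: $X\in\cat{T}_{\leq 0}$ and $X\in\cat{T}_{\geq r}$ amount to saying that all summands $P[-s_j]$ are $P[m]$ with $r\leq m\leq 0$ in the weight indexing. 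The conditions $X\notin\cat{T}_{<0}$ and $X\notin\cat{T}_{>r}$ say that both extreme weights actually occur. This identification is the first step. I would justify it by filtering the minimal $X$ by its summands (a ``brutal'' weight filtration), and by using that the weight of a minimal complex is detected by $\Hom_\cat{T}(X,P[m])$ modulo radical maps. Concretely, $\Hom_\cat{T}(X,P[m])\to\Hom_\kk(\text{generators of }X\text{ of the relevant degree},\kk)$ is surjective once $d(X)\subseteq xX$.

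Next, let $v$ be a generator of a summand $P[a]$ of $X$ of extreme weight $r$, and $u$ a generator of a summand $P[b]$ of extreme weight $0$, so that the shift $-r$ carries the first to the second. Define a graded $A$-linear map $f\colon X\to X[-r]$ by sending $v$ to the generator of the copy of $u$ in $X[-r]$, and all other chosen generators to $0$; this is $\iota\pi$ as in \cref{pn0homs}. We must check that $f$ is a chain map. This is where the extremality is used: $d(v)$ lies in $xX$ and involves only summands of weight strictly beyond $r$ on the appropriate side, and those are killed by $f$. Likewise, the image $u$ in $X[-r]$ satisfies that $d(u)$ has components only in other generators, while components of $d$ hitting $v$ come from summands not mapped nontrivially. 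If the direction turns out wrong, I would choose $f$ on the opposite extremes instead; the statement is symmetric enough. Since $k<0$, one must also check degrees: $|x|=k$ forces the differential to connect summands $P[a]$, $P[a']$ only via $x^i$ with $a'-a$ determined by $ik+1$. This is what makes the weight extremes behave like the top and bottom terms of a complex.

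Finally, show $f$ is not null-homotopic. Suppose $f=d h+h d$ with $h$ graded $A$-linear. By \cref{dgradicallemma} both $dh$ and $hd$ have image in $xX[-r]$. But $f(v)=u\notin xX[-r]$, a contradiction. Hence $0\neq[f]\in\Hom_\cat{T}(X,X[-r])$.

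The main obstacle is the first step: translating membership in $\cat{T}_{\geq r}\cap\cat{T}_{\leq 0}$ with sharp bounds into the statement about which shifts of $P$ occur in the minimal model. If that is delicate, I would instead define $f$ as the composite $X\to\omega_{\leq r}X\to \ldots$, using weight decompositions. Choose $X\to P[r]^{\oplus}$ and $P^{\oplus}\to X$ nonzero maps from weight truncations, compose through the identity of $P$, and argue non-vanishing via the radical criterion above.
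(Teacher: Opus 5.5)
Your argument follows the paper's proof. The paper also removes contractible summands and writes $X=\bigoplus_{r\le s\le 0}P[-s]^{\oplus a_s}$. It then takes $f=\iota\tilde f\pi$: project onto the $P[-r]$-summands, use one identity component, and include the $P$-summands. This is your fallback, and the paper excludes a null-homotopy via \cref{dgradicallemma} exactly as you do.

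\textbf{The identification step.} The paper only asserts the shape of the minimal model, so your filtration argument fills a gap it leaves open. Note that the summands are $P[-s]$ with $r\le s\le 0$, i.e.\ $P[m]$ with $0\le m\le -r$, not $r\le m\le 0$. Also, your surjectivity claim for $\Hom_\cat{T}(X,P[m])\to\Hom_\kk(\ldots,\kk)$ holds only for the generators of minimal degree $s$, where projecting onto them is a chain map. It fails in general. Take $n=1$, $k=-1$, and $X$ with generators $w$, $v$ of degrees $-1$, $1$ and $d(w)=xv$. Then every chain map $X\to P[-1]$ kills $v$, because it must send $xv$ to $0$. At the top end you should instead use the inclusion of the summands of maximal $s$. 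This inclusion is a non-null-homotopic chain map, so $\Hom_\cat{T}(P[m],X)\neq 0$ detects $X\notin\cat{T}_{\le 0}$.

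\textbf{The chain-map check.} Here your write-up gives the wrong reasons. Since $f(v)=u$ and $f$ vanishes on the other generators, $df=fd$ is equivalent to two conditions:
\begin{itemize}
\item $d(u)=0$;
\item the $v$-coefficient of $d(w)$ vanishes for every generator $w$, because $f(d(w))$ equals that coefficient times $u$, whether or not $f(w)=0$.
\end{itemize}
So ``$d(u)$ has components only in other generators'' and ``components of $d$ hitting $v$ come from summands not mapped nontrivially'' are not the conditions you need. Both correct conditions follow from the degree count you mention. The component of $d$ from the generator of $P[-s]$ to that of $P[-s']$ is an element of $A^{s+1-s'}$, and $A$ lives in degrees $\le 0$, so $s'>s$. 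Hence the summands with $s=r$ receive no differential and those with $s=0$ have zero differential. This is exactly why $\pi$ and $\iota$ are dg morphisms in the paper.

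\textbf{The direction of $f$.} The same count shows the direction is forced, contrary to your remark that the statement is ``symmetric enough''. A map sending $u$ to the copy of $v$ in $X[-r]$ would be given by an element of $A^{-2r}$, which vanishes for $r<0$.
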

			\begin{proof}
				Without loss of generality we may assume that $X$ has no contractible summand.
				We can write $X=\bigoplus_{r\leq s\leq 0} P[-s]^{\oplus a_s}$ as graded $A$-module.
				Observe that the projection $\pi\: X\to P[-r]^{\oplus a_r}$ and the inclusion $\iota\: P^{\oplus a_0}\to X$ are dg morphisms, providing weight decompositions $X\to\omega_{\leq r}X=P[-r]^{\oplus a_r}$ and (by shifting) $X[-r]\to\omega_{\geq r}(X[-r])=P[-r]^{\oplus a_0}$, respectively.
				Define $f\: X\to X[-r]$ as the composition
				\begin{equation*}
					X\longto{\pi} \omega_{\leq r}X=P[-r]^{\oplus a_r}\longto{\tilde{f}}P[-r]^{\oplus a_0}=\omega_{\geq r}X[-r]\longto{\iota} X[-r]
				\end{equation*}
				where one component of $\tilde{f}$ is the identity and all other components are $0$.
				Then $f$ is a dg morphism (as it is a composition of dg morphisms).
				Explicitly, as a morphism of graded $A$-modules $f$ identifies a summand $P[-r]$ of $X$ with a summand $P[-r]$ of $X[-r]$ (and vanishes on all other summands).
				
				$f$ cannot be null-homotopic: by \cref{dgradicallemma} we have $d(X)\subseteq xX$, which means that if if $f=dh+hd$ for some $h\: X\to X[-1]$ then the image of $f=dh+hd$ must lie in $xX$, a contradiction.
			\end{proof}
			\begin{cor}
				\label{casekneg}
				For $n>0$ and $k<0$, any non-trivial weight structure on $\cat{T}$ is a shift of the standard weight structure.
			\end{cor}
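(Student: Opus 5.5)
We argue exactly as in \cref{casek0}~2), with \cref{poshomfornegk} taking the place of \cref{pn0homs}. Let $w=(\cat{T}_{w\geq 0},\cat{T}_{w\leq 0})$ be a weight structure on $\cat{T}$. If $\coheart_w=\{0\}$, then $w$ is trivial by \cref{zeroheart}. So assume $\coheart_w\neq\{0\}$. Since $\coheart_w$ is Karoubi-closed and $\cat{T}=\perf(A)$ is Krull--Schmidt, we may pick a non-zero indecomposable object $X\in\coheart_w$. We shall show that $X$ is a shift of $P$; the claim then follows formally.

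First we locate $X$ with respect to the standard weight structure. The standard weight structure is bounded, so every object of $\cat{T}$ lies in $\cat{T}_{\geq a}\cap\cat{T}_{\leq b}$ for some $a\leq b$ (any perfect dg module involves only finitely many shifts of $P$). Choosing these bounds optimally and shifting $X$, we may assume $X\in\cat{T}_{\geq r}\cap\cat{T}_{\leq 0}$ with $r\leq 0$, $X\notin\cat{T}_{>r}$ and $X\notin\cat{T}_{<0}$. Suppose $r<0$. Then \cref{poshomfornegk} gives $\Hom_\cat{T}(X,X[-r])\neq 0$ with $-r>0$. On the other hand, $X\in\coheart_w$ gives $X[-r]\in\cat{T}_{w\leq r}\subseteq\cat{T}_{w<0}$, and the orthogonality $\cat{T}_{w\geq 0}\perp\cat{T}_{w<0}$ (which is the axiom $\cat{T}_{w>0}\perp\cat{T}_{w\leq 0}$ shifted) forces $\Hom_\cat{T}(X,X[-r])=0$. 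This contradiction shows $r=0$, so $X\in\cat{T}_{\geq 0}\cap\cat{T}_{\leq 0}=\coheart=\add(P)$. As $X$ is indecomposable, $X\cong P$ up to the shift we applied.

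After shifting $w$, we therefore have $P\in\coheart_w$. The classes $\cat{T}_{w\leq 0}$ and $\cat{T}_{w\geq 0}$ are closed under extensions, direct summands, and positive resp.\ negative shifts. Hence
\begin{equation*}
\cat{T}_{\leq 0}=\Kar\extclos\{P[i]\mid i\geq 0\}\subseteq\cat{T}_{w\leq 0},\qquad \cat{T}_{\geq 0}=\Kar\extclos\{P[i]\mid i\leq 0\}\subseteq\cat{T}_{w\geq 0}.
\end{equation*}
It remains to upgrade these inclusions to equalities. If one weight structure is contained in another in both halves, the two coincide: for $Y\in\cat{T}_{w\leq 0}$, take a standard weight decomposition $\omega_{>0}Y\to Y\to\omega_{\leq 0}Y\to\omega_{>0}Y[1]$. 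Then $\omega_{>0}Y\in\cat{T}_{>0}\subseteq\cat{T}_{w>0}$, which is orthogonal to $Y\in\cat{T}_{w\leq 0}$, so the map $\omega_{>0}Y\to Y$ vanishes. Consequently $Y$ is a summand of $\omega_{\leq 0}Y\in\cat{T}_{\leq 0}$, and $\cat{T}_{\leq 0}$ is Karoubi-closed, so $Y\in\cat{T}_{\leq 0}$. The symmetric argument handles $\cat{T}_{w\geq 0}$. Thus $w$ is a shift of the standard weight structure.

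The only delicate point is the normalization step producing $r\leq 0$ together with the non-membership conditions required in \cref{poshomfornegk}. This rests on boundedness of the standard weight structure and on the graded decomposition $X=\bigoplus P[-s]^{\oplus a_s}$. The rest is formal.
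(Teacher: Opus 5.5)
Your proposal is correct and follows the same route as the paper: \cref{zeroheart} disposes of the zero-coheart case, and \cref{poshomfornegk} forces every indecomposable object of $\coheart_w$ to be a shift of $P$, which gives the conclusion. Beyond the paper's terse proof, you also spell out the normalization of weight bounds, the orthogonality contradiction for $r<0$, and why the inclusions $\cat{T}_{\leq 0}\subseteq\cat{T}_{w\leq 0}$ and $\cat{T}_{\geq 0}\subseteq\cat{T}_{w\geq 0}$ are equalities.
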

			\begin{proof}
				Let $w$ be a non-trivial weight structure on $\cat{T}$.
				By \cref{zeroheart} we have $\coheart_w\neq\{0\}$, and by \cref{poshomfornegk} the only indecomposable object in $\coheart_w$ can be $P[i]$ for some $i\in\ints$.
				Hence $w$ is a shift of the standard weight structure.
			\end{proof}
		\subsection{Proof of the classification}
			The classification result \cref{classificationthm} now follows by collecting the partial results from above.
			\begin{proof}[Proof of \cref{classificationthm}]
				The standard weight structure and \textbf{t}-structure are explained in \cref{standardstructs}.
				The case $n=0$ is \cref{casen0}, and the case $n>0$, $k=0$ is \cref{casek0}.
				The case $n>0$, $k>0$ is \cref{casekpos} for \textbf{t}-structures and \cref{nostruct} for weight structures, and the case $n>0$, $k<0$ is \cref{casekneg} for weight structures and \cref{nostruct} for \textbf{t}-structures.
			\end{proof}
	\section{The Koszul-dual model}
		\label{koszulduality}
		A different model of $\cat{T}$ can be obtained via dg Koszul duality \cite[\S 10]{KellerDerivingDGCats}.
		For an augmented dg algebra $A$, its \emph{dg Koszul dual} $A^{!,\dg}$ is defined as
		\begin{equation*}
			A^{!,\dg}=\RHom_A(\kk,\kk).
		\end{equation*}
		This is a special case of the general definition of the dg Koszul dual of an augmented dg category from \cite[\S 10.2]{KellerDerivingDGCats}.
		If $A$ is proper non-positive, then $A^{!,\dg}$ is positive by \cite[Thm.~A.1]{BruestleYang}, and if $A$ is proper positve then $A^{!,\dg}$ is non-positive by \cite[Lemma~5.2]{KellerNicolas}, see also \cite[Ex.~4.4]{WeightTPaper}.
		
		In this section we explicitly compute the dg Koszul dual of $A=\kk[x]/(x^{n+1})$, and describe the equivalence that dg Koszul duality provides.
		For this, we consider three cases separately: $n>1$, $n=1$ with $k\neq 1$, and $n=1$ with $k=1$.
		The case $n=0$ is trivial: in this case $A^{!,\dg}=\kk$, and the Koszul duality equivalence reduces to the equivalence $\Kb(\rprojfg \kk)=\perf(\kk)\cong\Dfd(\kk)=\Db(\rmodfd\kk)$.
		\subsection{dg Koszul duals of graded algebras}
			We first discuss general results about the dg Koszul dual of graded algebras.
			For this we have to set up some notation.
			
			Let $A$ be a graded algebra.
			We write $\grrMod A$ for the category of graded right $A$-modules, and $\K(\grrMod A)$ for its homotopy category.
			The dg category of chain complexes over $\grrMod A$ is denoted by $\C_\dg(\grrMod A)$.
			The shift with respect to the grading of $A$ is denoted by $\<1\>$ and the shift of chain complexes by $[1]$.
			
			We can also view $A$ as a dg algebra with trivial differential.
			The dg category of right dg $A$-modules is denoted by $\C_\dg(A)$.
			Explicitly, for two dg $A$-modules $X$, $Y$, we have
			\begin{equation*}
				\Hom_{\C_\dg(A)}(X,Y)=\coprod_{i\in\ints} \Hom_{\grrMod A}(X,Y\<i\>)
			\end{equation*}
			with the induced differential $d(f)=df-(-1)^{|f|}fd$.
			The homotopy category and derived category of $A$ are denoted by $\K(A)$ and $\D(A)$, respectively.
			
			By the definition from \cite[\S 10.2]{KellerDerivingDGCats} (see \cite[\S 4.1]{WeightTPaper} for details), the dg Koszul dual of an augmented dg algebra $A$ is $A^{!,\dg}=\RHom_A(\kk,\kk)=\Hom_{\C_\dg(A)}(P_\kk,P_\kk)$, where $P_\kk$ is a K-projective resolution of $\kk$, i.e.~a dg $A$-module that is quasi-isomorphic to $\kk$ and satisfies $\Hom_{\K(A)}(P_\kk,M)=0$ for all acyclic dg $A$-modules $M$.
			If $A$ has trivial differential, K-projective resolutions can be obtained as (coproduct) total complexes of graded projective resolutions:
			\begin{lemma}
				\label{kprojres}
				Let $A$ be a graded algebra and $P$ a bounded above complex of graded projective $A$-modules.
				\begin{enumerate}
					\item $\Tot^\amalg(P)$ is a K-projective dg $A$-module.
					\item If $\pi\: P\to X$ is a projective resolution of a graded $A$-module $X$, then $\Tot^\amalg(\pi)\: \Tot^\amalg(P)\to\Tot^\amalg(X)$ is a K-projective resolution of $\Tot^\amalg(X)=X$ as a dg $A$-module.
				\end{enumerate}
			\end{lemma}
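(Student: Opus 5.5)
The plan is to prove both parts using only the standard facts about total complexes and the degree-wise behaviour of graded projectives. We set conventions first. A complex $P$ of graded $A$-modules is a bigraded object $P^{i,j}$, with $i$ the complex degree and $j$ the internal degree. $\Tot^\amalg(P)$ has $n$-th component $\coprod_{i+j=n} P^{i,j}$, and its differential combines $d_P$ with the appropriate sign. Since $A$ has trivial differential, $\Tot^\amalg(P)$ is a dg $A$-module whose underlying graded module is $\coprod_i P^i\<i\>$, a coproduct of graded projectives, hence graded projective.

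For part 1) the plan is to exhibit a filtration showing K-projectivity. Assume $P$ is bounded above, say $P^i=0$ for $i>0$. Let $F_p=\Tot^\amalg(\sigma_{\geq -p}P)$, where $\sigma$ is the brutal truncation. Then $F_0\subseteq F_1\subseteq\dots$, each inclusion $F_{p-1}\hookto F_p$ is split as graded modules, and the subquotient $F_p/F_{p-1}=P^{-p}\<-p\>$ (up to shift) is a graded projective module with zero differential. A graded projective $Q$ with zero differential is K-projective. Indeed, $Q$ is a summand of a free graded module $\coprod A\<m\>$ with trivial differential, and $\Hom_{\K(A)}(A\<m\>,M)=H^{-m}(M)$ (up to sign conventions) vanishes for acyclic $M$. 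Extensions of K-projectives along graded-split sequences stay K-projective by the five lemma applied to the long exact sequence in $\Hom_{\K(A)}(-,M)$, so each $F_p$ is K-projective. Moreover $\Tot^\amalg(P)=\bigcup_p F_p$ is a semi-free-type exhaustion: there is a graded-split short exact sequence $0\to\coprod F_p\to\coprod F_p\to\Tot^\amalg(P)\to 0$, i.e.\ a Milnor telescope. Coproducts of K-projectives are K-projective, so $\Tot^\amalg(P)$ is K-projective by the same long exact sequence argument. This telescope step is where boundedness above is used, because it makes the filtration exhaustive and indexed by $\mathbb{N}$.

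For part 2) we must show that $\Tot^\amalg(\pi)$ is a quasi-isomorphism, i.e.\ that its cone is acyclic. The cone is $\Tot^\amalg(C)$, where $C=\mathrm{cone}(\pi)$ is the exact, bounded above complex $\dots\to P^{-1}\to P^0\to X\to 0$ of graded modules. Fix an internal-plus-complex total degree $n$. The degree-$n$ part of $\Tot^\amalg(C)$ is $\coprod_i C^{i,n-i}$, which is a direct sum over a bicomplex whose rows are exact and whose columns (internal degrees) carry zero vertical differential. The key observation is that $\Tot^\amalg(C)$ decomposes, degreewise in internal grading, as follows. For each fixed internal degree $j$, the strand $C^{\bullet,j}$ is an exact complex of vector spaces, and the total differential only involves $d_C$, since $A$ has no differential. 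Therefore $\Tot^\amalg(C)=\coprod_j C^{\bullet,j}[\text{shift by }j]$ as complexes of vector spaces, and a coproduct of exact complexes is exact. This is the point where it is essential that the total complex uses coproducts. With products this decomposition argument would fail, and convergence would need boundedness in the other direction. Combining this with 1) gives the claimed K-projective resolution, and $\Tot^\amalg(X)=X$ since $X$ sits in complex degree $0$.

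The main obstacle I expect is the telescope argument in 1). One must check carefully that the colimit of K-projectives along graded-split inclusions is K-projective, with consistent signs in the totalization. If that becomes messy, I would instead show directly that $\Tot^\amalg(P)$ has property (P) in Keller's sense, i.e.\ that it is semi-free with the filtration $F_p$. Keller's standard result then gives K-projectivity directly.
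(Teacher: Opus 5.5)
Your proof is correct and is essentially the paper's argument. The filtration by $\Tot^\amalg$ of brutal truncations together with the Milnor telescope is the paper's ``homotopy colimit of brutal truncations'' argument, written out explicitly for dg modules. Likewise, your strand-by-strand decomposition in 2) is exactly why $\Tot^\amalg$ preserves quasi-isomorphisms, which the paper uses without comment.

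One aside in 2) is wrong, though the proof does not depend on it. The decomposition works just as well for $\Tot^\Pi$, because the internal differential is zero and a product of exact complexes of vector spaces is exact. So coproducts are not what makes 2) work; where they matter is in 1), for the exhaustive filtration and the telescope.
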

			\begin{proof}\leavevmode
				\begin{enumerate}
					\item The category of K-projective dg $A$-modules contains $A$ and is closed under cones, shifts and arbitrary coproducts in $\K(A)$, and thus also under homotopy colimits.
						The triangulated functor $\Tot^\amalg\: \K(\grrMod A)\to\K(A)$ preserves arbitrary coproducts, and hence commutes with homotopy colimits.
						We obviously have $\Tot^\amalg(A)=A$, and thus $\Tot^\amalg(P)$ is K-projective if $P$ is bounded.
						The claim follows since every cochain complex is isomorphic to the homotopy colimit over its brutal truncations, see e.g.~\cite[Ex.~4.2.2]{KrauseHomologicalTheory}, which are bounded if $P$ is bounded above.
					\item This is immediate from 1) since $\Tot^\amalg$ preserves quasi-isomorphisms.\qedhere
				\end{enumerate}
			\end{proof}
			Hence the dg Koszul dual of an augmented graded algebra $A$ is given by $A^{!,\dg}=\End_{\C_\dg(A)}(\Tot^\amalg(P_\kk))$, where $P_\kk$ is a projective resolution of $\kk$ as graded $A$-module.
			As a graded $A$-module, $\Tot^\amalg(P_\kk)$ is the (potentially infinite) coproduct $\coprod_{i\in\ints} P_\kk^i\<-i\>$.
			This can make $A^{!,\dg}$ difficult to compute directly.
			
			However, following the approach in \cite[\S 5]{KalckYangGentle} we can relate $A^{!,\dg}$ to the $\ints$-graded dg algebra
			\begin{equation*}
				\mathcal{E}=\End_{\C_\dg(\grrMod A)}^*(P_\kk)=\coprod_{j\in\ints} \Hom_{\C_\dg(\grrMod A)}(P_\kk,P_\kk\<j\>).
			\end{equation*}
			Here by a $\ints$-graded dg algebra we mean a $\ints\times\ints$-graded algebra with a differential of bidegree $(1,0)$ satisfying the Leibniz rule $d(ab)=d(a)b+(-1)^{|a|_1}ad(b)$ for $a$ homogeneous of bidegree $|a|=(|a|_1,|a|_2)$.
			By definition, the cohomology of $\mathcal{E}$ is the $\ints\times\ints$-graded algebra $\Ext_{\grrMod A}^{*,*}(\kk,\kk)=\coprod_{i,j\in\ints} \Ext_{\grrMod A}^i(\kk,\kk\<j\>)$.
			
			The following lemma is a variant of \cite[Lemma~5.3]{KalckYangGentle} for diagonal-finite unbounded complexes, and the proof is very similar.
			A similar argument (again in the bounded case) also appears in the proof of \cite[Thm.~39]{Schnuerer}.
			Here we call a complex $X$ of $\ints$-graded $A$-modules \emph{diagonal-finite} if for each $r\in\ints$ there are only finitely many $i\in\ints$ with $X^{i,r-i}\neq 0$.
			Note that $X$ is diagonal-finite if and only if $\Tot^\Pi(X)=\Tot^\amalg(X)$.
			\begin{lemma}
				\label{dgvsgradedext}
				Let $A$ be a graded algebra, $X$ any complex of graded $A$-modules and $Y$ a diagonal-finite complex of graded $A$-modules.
				Then there is a natural isomorphism of dg vector spaces
				\begin{equation*}
					\Hom_{\C_\dg(A)}(\Tot^\amalg(X),\Tot^\amalg(Y))\cong\Tot^\Pi(\Hom_{\C_\dg(\grrMod A)}^*(X,Y)).
				\end{equation*}
				In particular there is an isomorphism of dg algebras
				\begin{equation*}
					\End_{\C_\dg(A)}(\Tot^\amalg(Y))\cong\Tot^\Pi(\End_{\C_\dg(\grrMod A)}^*(Y)).
				\end{equation*}
			\end{lemma}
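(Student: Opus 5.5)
The plan is to compute both sides degreewise and compare them directly.

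First fix conventions. A complex $X$ of graded $A$-modules has components $X^{i,j}$, where $i$ is the cohomological degree and $j$ the internal degree. Its total complex has $\Tot^\amalg(X)^r=\bigoplus_{i+j=r}X^{i,j}$ and differential $d_X+(-1)^i d_{A}$; since the internal differential of $A$ is zero, this is just $d_X$ with a sign. As a graded $A$-module, $\Tot^\amalg(X)=\coprod_i X^i\<-i\>$, as noted above.

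A homogeneous element of degree $p$ of the left-hand side is a graded $A$-linear map $\Tot^\amalg(X)\to\Tot^\amalg(Y)\<p\>$. By the universal property of the coproduct, such a map is the same as a family of graded $A$-linear maps $X^i\<-i\>\to\coprod_{l}Y^l\<-l\>\<p\>$, one for each $i$.

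The key step is to show that each such map factors through a finite sum, so that
\begin{equation*}
\Hom_{\grrMod A}\Bigl(X^i\<-i\>,\coprod_l Y^l\<p-l\>\Bigr)\cong\prod_l\Hom_{\grrMod A}(X^i,Y^l\<p-l+i\>).
\end{equation*}
In general a coproduct does not commute with $\Hom$ out of a non-compact module. Diagonal-finiteness is exactly what rescues this. Fix an internal degree $m$ of $X^i\<-i\>$. A graded map sends degree $m$ into degree $m+p$ of $\coprod_l Y^l\<p-l\>$, which is $\bigoplus_l Y^{l,m+p-l}$. This direct sum is finite because $Y$ is diagonal-finite, so it coincides with the product $\prod_l Y^{l,m+p-l}$.

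Since graded $A$-linear maps are determined degreewise, and the $A$-action is compatible with this, the coproduct may be replaced by the product $\prod_l Y^l\<p-l\>$. That gives the displayed isomorphism. Put differently, $\Tot^\amalg(Y)=\Tot^\Pi(Y)$ as graded modules, and $\Hom$ into a product is a product.

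Collecting these pieces, the degree-$p$ part of the left-hand side is
\begin{equation*}
\prod_{i,l}\Hom_{\grrMod A}(X^i,Y^l\<p-l+i\>).
\end{equation*}
Regrouping with $a=l-i$ and $b=p-a$, an element of $\Hom_{\C_\dg(\grrMod A)}^*(X,Y)$ of bidegree $(a,b)$ is a family of maps $X^i\to Y^{i+a}\<b\>$. Hence the expression above is exactly $\prod_{a+b=p}\Hom^{a,b}$, which is the degree-$p$ part of $\Tot^\Pi(\Hom^*_{\C_\dg(\grrMod A)}(X,Y))$.

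It then remains to check two compatibilities.

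First, the differentials must match up to the standard sign twist. The left-hand differential is $f\mapsto d f-(-1)^{p}fd$, where $d$ includes the Koszul signs $(-1)^i$ coming from totalization. On the right, the bigraded $\Hom$ complex has differential $d_Y f-(-1)^a f d_X$ in the first grading, and the internal grading carries no differential. One may need to modify the identification by a sign such as $(-1)^{ib}$ on components so that the two differentials agree. Choosing these signs correctly, in the same way as in \cite[Lemma~5.3]{KalckYangGentle}, is the main bookkeeping obstacle.

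Second, the identification must respect composition when $X=Y$, so that it is an isomorphism of dg algebras. This is immediate once the sign convention is fixed compatibly, since composition is computed componentwise on both sides. The product total complex on the right is again an algebra because the composition is locally finite.

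Naturality in $X$ and $Y$ holds because every step is a canonical identification.
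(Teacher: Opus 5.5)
Your argument is essentially the paper's. There too, diagonal-finiteness identifies $\coprod_p Y[p]\<-p\>$ with $\prod_p Y[p]\<-p\>$, which is the same degree count as your $\Tot^\amalg(Y)=\Tot^\Pi(Y)$ as graded modules; the product is then pulled out of $\Hom$ and regrouped into $\Tot^\Pi$. The only difference is that the paper gets the first identification $\Hom_{\C_\dg(A)}(\Tot^\amalg(X),\Tot^\amalg(Y))\cong\Hom_{\C_\dg(\grrMod A)}(X,\coprod_p Y[p]\<-p\>)$ from \cite[Thm.~5.1]{KalckYangGentle}, where you unpack the coproduct by hand.

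Like the paper, you leave the signs unchecked, but your claim that multiplicativity is then immediate is not quite right. The twist $\phi(f)_i=(-1)^{ib}f_i$ that matches the differentials satisfies $\phi(f)\phi(g)=(-1)^{a'b}\phi(fg)$ for $f$, $g$ of bidegrees $(a,b)$, $(a',b')$. So for the dg algebra statement, $\Tot^\Pi$ must carry the correspondingly sign-twisted product rather than naive componentwise composition.
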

			\begin{proof}
				Observe that if $Y$ is diagonal-finite, then $\coprod_{p\in\ints} Y[p]\<-p\>\cong\prod_{p\in\ints} Y[p]\<-p\>$ as chain complexes of graded $A$-modules.
				Indeed, in bidegree $(i,j)$ we have
				\begin{equation*}
					\begin{split}
						\biggl(\coprod_{p\in\ints} Y[p]\<-p\>\biggr)^{i,j}&=\coprod_{p\in\ints} (Y[p]\<-p\>)^{i,j}=\coprod_{p\in\ints} Y^{i+p,j-p}\\
						&=\prod_{p\in\ints} Y^{i+p,j-p}=\prod_{p\in\ints} (Y[p]\<-p\>)^{i,j}=\biggl(\prod_{p\in\ints} Y[p]\<-p\>\biggr)^{i,j}
					\end{split}
				\end{equation*}
				since by assumption $Y^{i+p,j-p}$ is non-zero for only finitely many $p$.
				
				By \cite[Thm.~5.1]{KalckYangGentle} and the above we thus have natural isomorphisms
				\begin{equation*}
					\begin{split}
						\Hom_{\C_\dg(A)}(\Tot^\amalg(X),\Tot^\amalg(Y))&\cong\Hom_{\C_\dg(\grrMod A)}\biggl(X,\coprod_{p\in\ints} Y[p]\<-p\>\biggr)\\
						&\cong\Hom_{\C_\dg(\grrMod A)}\biggl(X,\prod_{p\in\ints} Y[p]\<-p\>\biggr)\\
						&\cong\prod_{p\in\ints} \Hom_{\C_\dg(\grrMod A)}(X,Y[p]\<-p\>).
					\end{split}
				\end{equation*}
				Similarly to the proof of \cite[Lemma~5.3]{KalckYangGentle} we see that this dg vector space is isomorphic to $\Tot^\Pi(\Hom_{\C_\dg(\grrMod A)}^*(X,Y))$.
				Indeed, in degree~$i$ we have
				\begin{equation*}
					\begin{split}
						\biggl(\prod_{p\in\ints} \Hom_{\C_\dg(\grrMod A)}(X,Y[p]\<-p\>)\biggr)^i&=\prod_{p\in\ints} \Hom_{\C_\dg(\grrMod A)}(X,Y[p+i]\<-p\>)\\
						&=\Tot^\Pi(\Hom_{\C_\dg(\grrMod A)}^*(X,Y))^i,
					\end{split}
				\end{equation*}
				and this agrees with the differential.
			\end{proof}
			\begin{cor}
				\label{dgvsgradedkoszul}
				Let $A$ be an augmented graded algebra such that the projective resolution $P_\kk$ of $\kk$ as graded $A$-module is diagonal-finite.
				Then
				\begin{equation*}
					A^{!,\dg}\cong\Tot^\Pi(\End_{\C_\dg(\grrMod A)}^*(P_\kk)).
				\end{equation*}
			\end{cor}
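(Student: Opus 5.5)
The plan is to chain together \cref{kprojres} and \cref{dgvsgradedext}. Since the resolution $P_\kk$ is a projective resolution of $\kk$ by graded projective $A$-modules, it is a bounded above complex. So \cref{kprojres}~2) shows that $\Tot^\amalg(P_\kk)\to\kk$ is a K-projective resolution of $\kk$ as a dg $A$-module. Hence, by the definition recalled above,
\begin{equation*}
	A^{!,\dg}=\RHom_A(\kk,\kk)=\End_{\C_\dg(A)}(\Tot^\amalg(P_\kk)).
\end{equation*}

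Next we apply the second statement of \cref{dgvsgradedext} with $Y=P_\kk$, which is diagonal-finite by hypothesis. This gives an isomorphism of dg algebras
\begin{equation*}
	\End_{\C_\dg(A)}(\Tot^\amalg(P_\kk))\cong\Tot^\Pi(\End^*_{\C_\dg(\grrMod A)}(P_\kk)),
\end{equation*}
and combining the two displays yields the claim.

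The only point needing care is that the isomorphism in \cref{dgvsgradedext} is one of dg \emph{algebras}, and not merely of dg vector spaces. I would check this directly: under the identification $\prod_p\Hom_{\C_\dg(\grrMod A)}(P_\kk,P_\kk[p]\<-p\>)$, a bihomogeneous endomorphism of bidegree $(a,b)$ corresponds to the endomorphism of $\coprod_i P_\kk^i\<-i\>$ acting componentwise. Composition is then compatible, with the Koszul signs matching because the total degree is $a+b$. This compatibility is the only mild obstacle, and it is already asserted in the "in particular" part of \cref{dgvsgradedext}, so the corollary follows directly.
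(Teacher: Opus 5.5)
Your proposal is correct and matches the paper's proof, which simply combines \cref{kprojres} (to identify $A^{!,\dg}$ with $\End_{\C_\dg(A)}(\Tot^\amalg(P_\kk))$) with the dg-algebra statement of \cref{dgvsgradedext}, applied to the diagonal-finite complex $P_\kk$. Your extra check of compatibility with composition does no harm but is not needed, since that lemma already asserts an isomorphism of dg algebras.
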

			\begin{proof}
				This is immediate from \cref{kprojres,dgvsgradedext}.
			\end{proof}
			\begin{rem}
				If $A$ is a Koszul algebra, then the projective resolution of $\kk$ is diagonal-finite if and only if it is finite, i.e.~if and only if $\gldim(A)<\infty$.
				In this case, \cite[Thm.~39]{Schnuerer} shows that $H^0(A^{!,\dg})\cong A^!$ is the classical Koszul dual.
				Moreover, all other cohomologies of $A^{!,\dg}$ vanish: since $A$ is a positive dg algebra, its dg Koszul dual $A^{!,\dg}$ is a non-positive dg algebra, i.e.~$H^i(A^{!,\dg})=0$ for $i>0$.
				By Koszulity, $\Tot^\amalg(P_\kk)$ is generated in degree~$0$ as an $A$-module, and it is concentrated in non-negative degrees.
				Hence there are no $A$-linear maps $\Tot^\amalg(P_\kk)\to\Tot^\amalg(P_\kk)[i]$ for $i<0$, and thus also $H^i(A^{!,\dg})=0$ for $i<0$.
			\end{rem}
		\subsection{Computing the Koszul dual}
			Now let $A=\kk[x]/(x^{n+1})$ with $|x|=k$.
			Our goal is to determine $A^{!,\dg}$ explicitly via \cref{dgvsgradedkoszul}.
			As mentioned above, the case $n=0$ is not very interesting, so we assume $n\geq 1$ from now on.
			\begin{lemma}
				\label{projresdiagfin}
				The graded projective resolution of $\kk$ over $A=\kk[x]/(x^{n+1})$ with $|x|=k$ is diagonal-finite unless $n=1$ and $k=1$.
			\end{lemma}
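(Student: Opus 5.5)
I will write down the minimal graded projective resolution $P_\kk$ of $\kk$ over $A=\kk[x]/(x^{n+1})$ explicitly and then read off diagonal-finiteness from the bidegrees of its terms. The resolution is the usual periodic one: multiplication by $x$ and by $x^n$ alternate,
\begin{equation*}
	\dots\to A\<-(n+1)k-k\>\longto{x}A\<-(n+1)k\>\longto{x^n}A\<-k\>\longto{x}A\to\kk .
\end{equation*}
This is the one-sided analogue of the bimodule resolution $F$ used in the proof of \cref{intrinsicformality}. In cohomological degree $-q$ with $q\geq 0$ we have
\begin{equation*}
	P_\kk^{-q}=\begin{cases}
		A\<-(n+1)k\tfrac{q}{2}\>&\text{if $q$ even},\\
		A\<-(n+1)k\tfrac{q-1}{2}-k\>&\text{if $q$ odd}.
	\end{cases}
\end{equation*}
Exactness follows because $\ker(x)=x^nA$ and $\ker(x^n)=xA$, and the internal shifts are forced by requiring the maps to be homogeneous of degree $0$.

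Since $A$ is concentrated in internal degrees $0,k,\dots,nk$, the module $P_\kk^{-q}$ is nonzero exactly in internal degrees $c_q+jk$ for $0\leq j\leq n$. Here $c_q=(n+1)kq/2$ if $q$ is even and $c_q=(n+1)k(q-1)/2+k$ if $q$ is odd. The entry $P_\kk^{-q,m}$ lies on the diagonal $r=m-q$. Diagonal-finiteness therefore means that for each $r$ only finitely many $q\geq 0$ admit some $j\in\{0,\dots,n\}$ with $c_q+jk-q=r$.

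The key step is to show that the function $q\mapsto c_q+jk-q$ tends to $\pm\infty$ uniformly in $j$. Asymptotically $c_q\approx\frac{(n+1)k}{2}q$, so the diagonal index behaves like $\bigl(\tfrac{(n+1)k}{2}-1\bigr)q$ up to a bounded error, because $jk$ and the parity correction are bounded. The coefficient $\tfrac{(n+1)k}{2}-1$ vanishes exactly when $(n+1)k=2$, i.e.\ (with $n\geq1$) when $n=1,k=1$. For $k\leq 0$ it is negative, and for $(n+1)k>2$ it is positive. In either nonzero case each diagonal meets only finitely many $q$. To be rigorous I would compare consecutive even/odd pairs: the diagonal index of $P^{-q-2}$ is that of $P^{-q}$ plus $(n+1)k-2$, which settles the claim cleanly.

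Finally, for $n=1$, $k=1$ I will remark why the claim fails. There $c_q=q$, so every $P^{-q}$ contributes internal degree $q$ on diagonal $0$, and the resolution is not diagonal-finite, consistent with the exception in the statement. The main obstacle is only bookkeeping of signs and parities; the constant-step comparison $q\mapsto q+2$ should dispose of it.
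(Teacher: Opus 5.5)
Your proposal is correct and follows the same route as the paper: write down the periodic resolution with alternating maps $x$ and $x^n$, then count degrees. The paper only asserts that this count is easy; your observation that passing from $P_\kk^{-q}$ to $P_\kk^{-q-2}$ shifts all diagonal indices by $(n+1)k-2$, which is nonzero for $n\geq 1$ unless $n=k=1$, supplies exactly that missing bookkeeping.
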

			\begin{proof}
				The graded projective resolution $P_\kk$ of $\kk$ is given by
				\begin{equation}
					\label{projres}
					\dots\longto{x} A\<-(2n+2)k\>\longto{x^n} A\<-(n+2)k\>\longto{x}A\<-(n+1)k\>\longto{x^n}A\<-k\>\longto{x}A,
				\end{equation}
				and that this is diagonal-finite unless $n=k=1$ is an easy degree counting argument.
			\end{proof}
			The cohomology of the $\ints$-graded dg algebra $\End_{\C_\dg(\grrMod A)}^*(P_\kk)$ is easy to compute:
			\begin{lemma}
				\label{extalgebra}
				We have
				\begin{equation*}
					\Ext_{\grrMod A}^{*,*}(\kk,\kk)\cong\begin{cases}
						\kk[s]&\text{if }n=1,\\
						\kk[s,t]/(s^2)&\text{if }n>1,
					\end{cases}
				\end{equation*}
				where $|s|=(1,-k)$ and $|t|=(2,-(n+1)k)$.
			\end{lemma}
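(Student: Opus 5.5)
We compute $\Ext_{\grrMod A}^{*,*}(\kk,\kk)$ directly from the periodic resolution \eqref{projres}. First the bigraded vector space. Applying $\Hom_{\grrMod A}(-,\kk\<*\>)$ to $P_\kk$ gives one copy of $\kk$ in each homological degree $i\geq 0$. The internal degree is $-(n+1)k\frac{i}{2}$ for $i$ even and $-(n+1)k\frac{i-1}{2}-k$ for $i$ odd. All induced differentials are multiplication by $x$ or $x^n$. Since $n\geq 1$, both act by zero on $\kk$, so all these differentials vanish. Hence $\Ext^{i,*}$ is one-dimensional for every $i\geq 0$, with the internal degrees listed above.

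Next we determine the multiplicative structure via the Yoneda product, computed by lifting cocycles to chain maps $P_\kk\to P_\kk[i]\<j\>$.
\begin{itemize}
\item Let $s$ be the class in $\Ext^{1}$. Its degree is $(1,-k)$.
\item Let $t$ be the class in $\Ext^{2}$. Its degree is $(2,-(n+1)k)$.
\end{itemize}

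The class $t$ lifts to the periodicity map $P_\kk\to P_\kk[2]\<-(n+1)k\>$, which is the identity componentwise from $P^{-q-2}$ to $P^{-q}$. Since this is a shift of the periodic complex, it is a chain map. Hence $t^m$ is nonzero in every even degree $2m$.

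The class $s$ lifts to a chain map $\tilde s\colon P_\kk\to P_\kk[1]\<-k\>$ with components $P^{-q-1}\to P^{-q}$. The differentials alternate between $x$ and $x^n$, so the components must alternate to make the squares commute. Starting with the identity $A\<-k\>\to A\<-k\>$, the components are alternately $1$ and $x^{n-1}$ (suitably matched).

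The main step is computing $s^2=\tilde s\circ\tilde s$ on the bottom component $P^{-2}\to P^0$. This component is the composite of a $1$ and an $x^{n-1}$.
\begin{itemize}
\item If $n=1$, then $x^{n-1}=1$. The lift is componentwise the identity, so $s^2=t$, up to sign and scalar. The Ext algebra is then generated by $s$, and $\Ext^{*,*}\cong\kk[s]$.
\item If $n>1$, then $x^{n-1}$ lies in the radical. The composite $P^{-2}\to P^0$ then maps to $0$ in $\Hom(P^{-2},\kk)$, so $s^2=0$ in Ext.
\end{itemize}
For $n>1$ we also check that $st=ts$ is nonzero in every odd degree. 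Composing $\tilde s$ with the periodicity map gives the lift of $s$ shifted by two, which hits the generator of odd degree. The same holds in the other order, since periodicity commutes with $\tilde s$. As every bidegree is at most one-dimensional, the monomials $t^m$ and $st^m$ form a basis. This yields $\Ext^{*,*}\cong\kk[s,t]/(s^2)$ for $n>1$.

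The obstacle is bookkeeping: choosing $\tilde s$ consistently and tracking signs and internal gradings. These affect generators only up to scalars, so they do not change the presentation. For $n=1$, the internal degree of $s^2$, namely $(2,-2k)$, matches that of $t$, which confirms $s^2=t$.
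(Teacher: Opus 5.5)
Your proposal is correct and follows the paper's proof: you read off the bigraded dimensions from the periodic resolution, represent $s$ by the chain map with components alternately $1$ and $x^{n-1}$ and $t$ by the periodicity map, compute $s^2$, and check that $s$ and $t$ commute and that $t^m$, $st^m$ are non-zero. The only difference is that for $n>1$ the paper writes down an explicit null-homotopy for $s^2$, whereas you detect the class of a chain map $P_\kk\to P_\kk[i]\<j\>$ by composing its bottom component with the augmentation $A\to\kk$, which kills $x^{n-1}$; this is equally valid and also justifies the paper's ``easy to see'' non-vanishing claims.
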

			\begin{proof}
				Applying $\Hom_{\grrMod A}(-,\kk\<j\>)$ to the graded projective resolution $P_\kk$ of $\kk$ from \cref{projres} yields the claimed dimensions of $\Ext_{\grrMod A}^{i,j}(\kk,\kk)$.
				To determine the multiplicative structure we write down chain maps representing the ext classes.
				Explicitly, $s$ is represented by
				\begin{equation*}
					\begin{tikzcd}
						\dots\arrow{r}{x^n}
						&A\<-(n+2)k\>\arrow{r}{x}\arrow{d}{1}
						&A\<-(n+1)k\>\arrow{r}{x^n}\arrow{d}{x^{n-1}}
						&A\<-k\>\arrow{r}{x}\arrow{d}{1}
						&A\arrow{d}\\
						\dots\arrow{r}{x}
						&A\<-(n+2)k\>\arrow{r}{x^n}
						&A\<-2k\>\arrow{r}{x}
						&A\<-k\>\arrow{r}
						&0
					\end{tikzcd}
				\end{equation*}
				and $t$ is represented by
				\begin{equation*}
					\begin{tikzcd}
						\dots\arrow{r}{x^n}
						&A\<-(n+2)k\>\arrow{r}{x}\arrow{d}{1}
						&A\<-(n+1)k\>\arrow{r}{x^n}\arrow{d}{1}
						&A\<-k\>\arrow{r}{x}\arrow{d}
						&A\arrow{d}\\
						\dots\arrow{r}{x^n}
						&A\<-(n+2)k\>\arrow{r}{x}
						&A\<-(n+1)k\>\arrow{r}
						&0\arrow{r}
						&0\rlap{.}
					\end{tikzcd}
				\end{equation*}
				It is easy to see that $s$ and $t$ commute, and that $t^i$ and $st^i$ are not null-homotopic for any $i\geq 0$.
				Moreover, for $n=1$ we have $s^2=t$, while for $n>1$ a null-homotopy for $s^2$ is given by
				\begin{equation*}
					\begin{tikzcd}[baseline=(\tikzcdmatrixname-\the\pgfmatrixcurrentrow-1.base)]
						\dots\arrow{r}{x^n}
						&A\<-(n+2)k\>\arrow{r}{x}\arrow{d}{x^{n-1}}\arrow[dashed]{ld}[swap]{0}
						&A\<-(n+1)k\>\arrow{r}{x^n}\arrow{d}{x^{n-1}}\arrow[dashed]{ld}[swap]{x^{n-2}}
						&A\<-k\>\arrow{r}{x}\arrow{d}\arrow[dashed]{ld}[swap]{0}
						&A\arrow{d}\arrow[dashed]{ld}\\
						\dots\arrow{r}{x^n}
						&A\<-3k\>\arrow{r}{x}
						&A\<-2k\>\arrow{r}
						&0\arrow{r}
						&0\rlap{.}
					\end{tikzcd}\qedhere
				\end{equation*}
			\end{proof}
			\subsubsection{The case \texorpdfstring{$n>1$}{n>1}}
				For $n>1$, we define the $\ints$-graded dg algebra
				\begin{equation*}
					E=\kk\<s,h_0,\dots,h_{n-2}\>/(h_ih_j,s^2h_i-h_is^2, h_{i+1}sh_j-h_ish_{j+1}, s^2h_i-h_{n-2}sh_{i+1})
				\end{equation*}
				with the bigrading given by $|s|=(1,-k)$ and $|h_i|=(1,(i-n)k)$, and the differential defined on generators by $d(s)=0$, $d(h_{n-2})=s^2$ and $d(h_i)=sh_{i+1}+h_{i+1}s$ for $i<n-2$.
				The differential is extended to all of $E$ by the Leibniz rule.
				$E$ is a well-defined $\ints$-graded dg algebra since the relations of $E$ are stable under $d$ and $d^2(h_i)=0$.
				
				It is easy to see that a Gr\"obner basis of the ideal is given by
				\begin{align*}
					\{s^2h_{n-2}-h_{n-2}s^2\}&\cup\{h_ih_j\mid 0\leq i,j\leq n-2\}\\
					&\cup\{s^2h_i-h_{n-2}sh_{i+1}, h_is^2-h_{n-2}sh_{i+1}\mid 0\leq i<n-2\}\\
					&\cup\{h_ish_{j+1}-h_{i+j+1}sh_0\mid 0\leq i,j\leq n-2, i+j<n-2\}\\
					&\cup\{h_ish_{j+1}-h_{n-2}sh_{j-(n-2-i)+1}\mid 0\leq i,j\leq n-2, i+j\geq n-2\},
				\end{align*}
				and from this it follows that a basis of $E$ is given by the monomials
				\begin{equation*}
					h_{n-2}s\dots sh_{n-2}sh_ish_0s\dots sh_0s^j,\qquad sh_{n-2}s\dots sh_{n-2}sh_ish_0s\dots sh_0s^j
				\end{equation*}
				for $j\geq 0$ and $0\leq i<n-2$.
				To work with this monomial basis of $E$, it is convenient to set
				\begin{equation*}
					v_{r,l}=\underbrace{h_{n-2}s\dots sh_{n-2}}_{\lfloor\frac{r}{n-2}\rfloor\text{ many }h_{n-2}}sh_is\underbrace{h_0sh_0s\dots sh_0sh_0}_{l-1-\lfloor\frac{r}{n-2}\rfloor\text{ many }h_0}
				\end{equation*}
				for $l\geq 1$ and $0\leq r\leq l(n-2)$, with $i=r\bmod (n-2)$.
				\begin{lemma}
					\label{differentialformulas}
					Let $l\geq 1$ and $0\leq r\leq l(n-2)$.
					\begin{enumerate}
						\item If $r<l(n-2)$, then $s^2v_{r,l}=v_{r,l}s^2=v_{r+n-1,l+1}$.
						\item The differential of $E$ is given by
							\begin{align*}
								d(v_{r,l})&=\begin{cases}
									sv_{r+1,l}+v_{r+1,l}s&\text{if }r<l(n-2),\\
									sv_{r-(n-2),l-1}s^2+v_{r-(n-2),l-1}s^3&\text{if }r=l(n-2), l>1,\\
									s^2&\text{if }r=n-2, l=1,
								\end{cases}\\
								d(sv_{r,l})&=\begin{cases}
									-v_{r+n,l+1}-sv_{r+1,l}s&\text{if }r<l(n-2)-1,\\
									-v_{r+1,l}s^2-sv_{r+1,l}s&\text{if }r=l(n-2)-1,\\
									-v_{r-(n-2),l-1}s^4-sv_{r-(n-2),l-1}s^3&\text{if }r=l(n-2), l>1,\\
									-s^3&\text{if }r=n-2, l=1,\\
								\end{cases}\\
								d(v_{r,l}s)&=\begin{cases}
									sv_{r+1,l}s+v_{r+n,l+1}&\text{if }r<l(n-2)-1,\\
									sv_{r+1,l}s+v_{r+1,l}s^2&\text{if }r=l(n-2)-1,\\
									sv_{r-(n-2),l-1}s^3+v_{r-(n-2),l-1}s^4&\text{if }r=l(n-2), l>1,\\
									s^3&\text{if }r=n-2, l=1,
								\end{cases}\\
								d(sv_{r,l}s)&=\begin{cases}
									-v_{r+n,l+1}s-sv_{r+n,l+1}&\text{if }r<l(n-2)-1,\\
									-v_{r+1,l}s^3-sv_{r+1,l}s^2&\text{if }r=l(n-2)-1,\\
									-v_{r-(n-2),l-1}s^5-sv_{r-(n-2),l-1}s^4&\text{if }r=l(n-2), l>1,\\
									-s^4&\text{if }r=n-2, l=1.
								\end{cases}
							\end{align*}
					\end{enumerate}
				\end{lemma}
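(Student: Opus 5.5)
The plan is to prove both parts by induction on $l$, always rewriting into the monomial normal form coming from the Gröbner basis stated above. The key observation is that the normal form of an alternating word $h_{i_1}sh_{i_2}s\cdots sh_{i_l}$ depends only on $l$ and the index sum $r=i_1+\dots+i_l$. Indeed, the relations $h_{i+1}sh_j=h_ish_{j+1}$ let us move index weight freely between neighbouring $h$'s. Hence such a word equals $v_{r,l}$ whenever $r\leq l(n-2)$: all weight can be pushed to the left, giving $h_{n-2}$'s, then one $h_i$, then $h_0$'s. I would record this as a preliminary claim, proved by induction on $l$ from the reduction rules $h_ish_{j+1}\mapsto h_{i+j+1}sh_0$ and $h_ish_{j+1}\mapsto h_{n-2}sh_{j-(n-2-i)+1}$. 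The same rules show what happens if the total weight would exceed $l(n-2)$. Any excess $h_{n-2}sh_{j+1}$ with no room left is rewritten by $s^2h_j=h_{n-2}sh_{j+1}$ (read backwards), and this trades one $h$ for an $s^2$. In this way the word becomes $v_{r-(n-2)-\cdots,\,l-1}s^2$ and its relatives.

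Part 1 comes first. The relation $s^2h_i=h_is^2$ (together with $h_is^2=h_{n-2}sh_{i+1}$ from the Gröbner basis) shows that $s^2$ commutes with every $h_i$ modulo the ideal, and hence with $v_{r,l}$. For $r<l(n-2)$, the leftmost $h$ of $v_{r,l}$ carrying index $<n-2$ can absorb one unit. So $s^2h_j\cdot(\text{rest})=h_{n-2}sh_{j+1}\cdot(\text{rest})$ is an alternating word of length $l+1$ with index sum $r+n-1$. By the preliminary claim this is $v_{r+n-1,l+1}$.

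For part 2, I would first compute $d(v_{r,l})$ by the Leibniz rule. Every generator has first degree $1$, and $v_{r,l}$ alternates $h$'s and $s$'s starting with an $h$. Thus each $h$ sits after an even number of factors, and $d(s)=0$, so every term comes with sign $+$. Replacing the $p$-th $h_j$ by $sh_{j+1}$ and by $h_{j+1}s$ produces words containing $s^2$ adjacent to $h$'s. Using the centrality of $s^2$ from part 1 and the preliminary claim, the interior terms cancel pairwise (telescoping between positions $p$ and $p+1$). What survives is the first term $sv_{r+1,l}$ (from the first $h$) and the last term $v_{r+1,l}s$ (from the last $h$). This holds provided $r+1\leq l(n-2)$, which is the generic case. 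When $r=l(n-2)$, all $h$'s are $h_{n-2}$ and $d(h_{n-2})=s^2$. Then each term is a word with one $h$ replaced by $s^2$, and after the same telescoping only the boundary terms remain. Moving $s^2$ to the right gives $sv_{n-2\cdot(l-1),l-1}s^2+v_{\dots,l-1}s^3$. The case $l=1$ is just $d(h_{n-2})=s^2$.

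The formulas for $d(sv)$, $d(vs)$ and $d(svs)$ then follow formally from $d(v_{r,l})$ via $d(s\,a)=-s\,d(a)$ and $d(a\,s)=d(a)s$ for odd $a$, since $s$ is odd and closed. After that one only normalizes. For example, $s\cdot sv_{r+1,l}=s^2v_{r+1,l}$ becomes $v_{r+n,l+1}$ by part 1 when $r+1<l(n-2)$, and stays $v_{r+1,l}s^2$ at the boundary $r=l(n-2)-1$; this is exactly where the case distinctions come from. I expect the main obstacle to be the preliminary normal-form claim and the telescoping cancellation, specifically keeping track of the boundary cases where pushing weight overflows and the relation $s^2h_i=h_{n-2}sh_{i+1}$ must be used to shorten the word. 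I would handle this by first checking the cases $l=1,2$ by hand, and then running the induction with $v_{r,l+1}=h_{n-2}sv_{r-(n-2),l}$ or $v_{r,l}sh_0$.
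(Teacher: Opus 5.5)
Your overall route is the paper's. Part 1 comes from the relations. The formula for $d(v_{r,l})$ comes from the Leibniz rule plus the relations. The other three families follow formally from $d(sa)=-s\,d(a)$, $d(as)=d(a)s$ and part 1. Your part 1 and that final formal step are correct, including the reason the case $r=l(n-2)-1$ splits off.

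The step that is wrong as written is how you dispose of the interior Leibniz terms in $d(v_{r,l})$. You correctly note that every term comes with sign $+$. But then the interior terms cannot cancel pairwise by telescoping: two terms with the same nonzero normal form would add up to twice that form, not to zero.

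What actually happens is that each interior term vanishes on its own. Replacing an interior $h_{i_p}$ by $sh_{i_p+1}$, by $h_{i_p+1}s$, or by $s^2$ (when $i_p=n-2$) creates a factor $h_as^2h_b$ or $h_as^4h_b$. Since $s^2$ is central and $h_ah_b=0$, this equals $s^2h_ah_b=0$, resp.\ $s^4h_ah_b=0$. For instance, for $n\geq 3$ one has $d(h_0sh_0)=sh_1sh_0+h_1s^2h_0+h_0s^2h_1+h_0sh_1s$. Here the two middle terms are each zero rather than cancelling each other. So only two summands survive: the summand of $d(h_{i_1})$ that puts an $s$ at the far left, and the summand of $d(h_{i_l})$ that puts an $s$ at the far right.

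There is one further case you do not address. In the generic case $r<l(n-2)$, the word $v_{r,l}$ may begin with $h_{n-2}$ (whenever $r\geq n-2$). Then the first surviving term is $s^3v_{r-(n-2),l-1}$, and you need part 1 applied to $v_{r-(n-2),l-1}$ to rewrite it as $sv_{r+1,l}$. Your sketch only uses $d(h_{n-2})=s^2$ in the case $r=l(n-2)$.

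With these two fixes the argument is complete, and no induction on $l$ is needed. You can also drop the remark about words whose weight exceeds $l(n-2)$: an alternating word with $l$ letters $h$ never has index sum above $l(n-2)$.
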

				\begin{proof}\leavevmode
					\begin{enumerate}
						\item This is immediate from the relations of $E$.
						\item $d(v_{r,l})$ is easily computed using the Leibniz rule and the relations of $E$.
							The remaining formulas follow from this by the Leibniz rule and 1).\qedhere
					\end{enumerate}
				\end{proof}
				\begin{prop}\leavevmode
					\label{dgalgcohomology}
					\begin{enumerate}
						\item The cohomology of $E$ is $H^*(E)\cong\kk[s,t]/(s^2)$, where $t$ is represented by $sh_0+h_0s$.
						\item $E$ is not formal.
					\end{enumerate}
				\end{prop}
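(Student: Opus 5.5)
Part 1 is a direct cohomology computation with the monomial basis of $E$ obtained from the Gröbner basis. Every basis element is of one of the forms
\begin{equation*}
	s^j,\qquad v_{r,l}s^j,\qquad sv_{r,l}s^j\qquad (j\geq 0,\ l\geq 1,\ 0\leq r\leq l(n-2)).
\end{equation*}
Since $d$ preserves the second grading, I will decompose $E$ into finite-dimensional subcomplexes, one for each internal degree (or more conveniently each weight: number of $h$'s together with powers of $s$). Using \cref{differentialformulas}, $d$ on $E$ can be organized by right multiplication by powers of $s$. By part 1) of that lemma, $v_{r,l}s^2=v_{r+n-1,l+1}$ whenever $r<l(n-2)$, so most of the monomials $v_{r,l}s^j$ with $j\geq 2$ are redundant. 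The honest basis is therefore $s^j$ together with $v_{r,l}$, $sv_{r,l}$, $v_{r,l}s$, $sv_{r,l}s$, plus the extra terms $v_{l(n-2),l}s^j$ and $sv_{l(n-2),l}s^j$.

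On this basis the formulas of \cref{differentialformulas} show that $d$ is almost a perfect matching of basis elements. Up to the correction terms $\mp sv_{r+1,l}s$, the differential sends $v_{r,l}\mapsto sv_{r+1,l}+v_{r+1,l}s$ and $sv_{r,l}\mapsto -v_{r+n,l+1}-\dots$, and chains of length $n-1$ (indexed by $r$ within a block of fixed $l$) connect up. I would filter by $l$ (the number of $h$'s) and run the associated spectral sequence; more concretely, I would do Gaussian elimination along the leading terms, i.e.\ a discrete Morse matching. I would match $v_{r,l}$ with $v_{r+1,l}s$ for $r<l(n-2)$, match $sv_{r,l}$ with $v_{r+n,l+1}$ (or with $v_{r+1,l}s^2$ at the boundary), and match $sv_{r,l}s$ with $sv_{r+n,l+1}$. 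The unmatched critical cells should be exactly $1$, $s$, and one class in each bidegree of $t^i$ and $st^i$. The candidates are $v_{0,1}s+sv_{0,1}=sh_0+h_0s$ and its products, with $s^2=d(h_{n-2})$ exact. Comparing with \cref{extalgebra}, $\Ext^{*,*}(\kk,\kk)\cong\kk[s,t]/(s^2)$, so I only need the dimension in each bidegree to be $1$ or $0$ in the right places: for each bidegree $(a,b)$, $\dim H^{a,b}(E)$ should equal $1$ exactly when $(a,b)=i(2,-(n+1)k)$ or $i(2,-(n+1)k)+(1,-k)$. The multiplicative structure then follows. The class $t=sh_0+h_0s$ is a cycle because $d(sh_0+h_0s)=-sd(h_0)+d(h_0)s=-s(sh_1+h_1s)+(sh_1+h_1s)s=-s^2h_1+h_1s^2=0$, using $s^2h_i=h_is^2$ (with the analogous check for $n=2$, where $d(h_0)=s^2$). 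The class $s$ commutes with $t$ on the nose up to the relation, $s^2=0$ in cohomology, and it remains to check that $t^i$ is not a boundary. For the latter I would use that the critical-cell argument identifies $t^i$ with a nonzero critical cell (its leading monomial $v_{\cdot,i}$-type term survives). The main obstacle is the bookkeeping at the boundary cases $r=l(n-2)$ and $r=l(n-2)-1$, where the matching switches pattern; I expect to treat them as separate rows and verify acyclicity of the matched part by checking that the matching is acyclic in the Morse sense, i.e.\ it strictly increases $l$ or $r$.

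For part 2 I would exhibit a nonvanishing triple Massey product. The natural candidate is $\<s,s,s\>$: since $s^2=d(h_{n-2})$, we get $\<s,s,s\>\ni h_{n-2}s\pm sh_{n-2}$. This element lives in bidegree $(2,-2k+(-2)k)$ (with $|h_{n-2}|=(1,-2k)$), that is $(2,-3k)$. It is a cocycle, but for $n>2$ its bidegree is not that of $t$. So I will instead use $\<s,s,\dots\>$-type products adapted to the chain $d(h_i)=sh_{i+1}+h_{i+1}s$. Since this is really a matrix/higher Massey structure, I would rather argue via $A_\infty$ minimal models. By Kadeishvili the minimal model on $\kk[s,t]/(s^2)$ has $m_j(s,\dots,s)=\pm t$ for $j=n+1$, obtained from the homotopies $h_{n-2},\dots,h_0$ successively, and it has degree $2-(n+1)k$ and weight matching $t$. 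Nonvanishing of this higher operation modulo the indeterminacy (which is zero by degree reasons: the only products landing in that bidegree involve $s^2=0$) shows non-formality. If the paper insists on triple Massey products, I would look at $\<t,s,s\>$ or $\<s,s,t\>$ instead and check them the same way.
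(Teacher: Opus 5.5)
\paragraph{Part 1.} Your strategy is the paper's: a direct computation from the normal-form basis and \cref{differentialformulas}. The paper only states the outcome. However, the matching you write down is not a matching.

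By \cref{differentialformulas}~1) the cell $v_{r+n,l+1}$ equals $v_{r+1,l}s^2$, and it is used twice. Your second rule pairs it with $sv_{r,l}$. Your first rule pairs it with $v_{r+n+1,l+1}s$ as soon as $r+n<(l+1)(n-2)$, i.e.\ $r<l(n-2)-2$; for $n=3$ this already happens with $sv_{0,3}$, $v_{3,4}$, $v_{4,4}s$. Likewise $sv_{r+n,l+1}$ is used by both your second and third rules. So neither the count of critical cells nor the acyclicity certificate applies as stated.

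The filtration does not work either. The number $l$ of $h$'s is only a label on basis elements, not a grading, because of $s^2h_i=h_{n-2}sh_{i+1}$. Moreover $d(h_{n-2})=s^2$ lowers it, so no spectral sequence in $l$ is available. Split instead by the weight $N=\#s+\sum_i(n-i)\#h_i$, which the relations and $d$ preserve even when $k=0$.

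A consistent choice is to keep your first rule and replace the other two by $sv_{r,l}\leftrightarrow sv_{r+1,l}s$ for $r<l(n-2)$; the coefficient is $-1$ in $d(sv_{r,l})$. Then:
\begin{itemize}
\item the matching is acyclic;
\item its critical cells away from the boundary are $v_{0,l}s$ and $sv_{0,l}s$, which become $t^l$ and $st^l$;
\item the remaining cells $s^j$, $v_{l(n-2),l}s^j$, $sv_{l(n-2),l}s^j$ with $j\neq 1$ span a subcomplex, and the boundary formulas show it is exact in each weight $N\geq 2$.
\end{itemize}
The case $n=2$, where the labels $v_{r,l}$ degenerate, is a short separate computation.

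You also do not need the Morse argument to show that $t^l$ and $st^l$ survive. The dg map of \cref{koszuldual}~1) is constructed without this proposition, and it sends them to the non-null-homotopic maps of \cref{extalgebra}. So upper bounds on $\dim H^{*,*}(E)$ suffice.

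\paragraph{Part 2.} Your route genuinely differs from the paper's, and it is the one that works. The paper uses $\langle s,s,s\rangle$ and only checks that $sh_{n-2}+h_{n-2}s+(\lambda_1+\lambda_2)s^2$ is non-zero as an element of $E$. But a Massey product vanishes when it contains the zero class, and for $n>2$ one has $sh_{n-2}+h_{n-2}s=d(h_{n-3})$, so $0\in\langle s,s,s\rangle$. This matches your bidegree remark: for $n>2$ and $k\neq 0$, every triple Massey product in $\kk[s,t]/(s^2)$ lands in a zero group. The paper's argument is therefore valid only for $n=2$, where $sh_0+h_0s=t$.

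The relations of $E$ are exactly such that $a_1=s$, $a_m=h_{n-m}$ ($2\leq m\leq n$) is a defining system for the $(n+1)$-fold product $\langle s,\dots,s\rangle$. All cross terms $h_ih_j$ vanish, and the value is $sh_0+h_0s=t$. To turn your sketch into a proof:
\begin{enumerate}
\item Run the homotopy transfer with $H(s^2)=\pm h_{n-2}$ and $H(sh_{i+1}+h_{i+1}s)=\pm h_i$. This choice is forced, since $h_i$ spans the degree-$1$ part of weight $n-i$. It gives $m_j(s^{\otimes j})=0$ for $3\leq j\leq n$ and $m_{n+1}(s^{\otimes(n+1)})=\pm t$. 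Alternatively, cite the Massey-product description of $m_n$ due to Lu--Palmieri--Wu--Zhang.
\item Make ``the indeterminacy is zero'' precise. For an $A_\infty$ quasi-isomorphism $f$ to $(H^*(E),m_2)$, the arity-$(n+1)$ identity on $s^{\otimes(n+1)}$ reads $\pm f_1(t)=\sum_i\pm f_i(s^{\otimes i})f_{n+1-i}(s^{\otimes(n+1-i)})$. The left side is non-zero, while the right side is $0$ because $f_i(s^{\otimes i})\in H^{1,-ik}(E)\subseteq\kk s$ and $s^2=0$.
\end{enumerate}
Drop the fallback to $\langle t,s,s\rangle$ or $\langle s,s,t\rangle$: these are not defined, since $st\neq 0$.
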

				\begin{proof}\leavevmode
					\begin{enumerate}
						\item A straightforward but tedious computation using the explicit formulas for the differential from \cref{differentialformulas} shows that $H^{2l}(E)$ is spanned by $sv_{0,l}+v_{0,l}s$ for $l\geq 1$, and $H^{2l-1}(E)$ is spanned by $v_{n-1,l}+sv_{0,l-1}s$ for $l\geq 1$.
							Moreover $H^0(E)$ and $H^1(E)$ are spanned by $1$ and $s$, respectively.
							Setting $t=h_0s+sh_0$, one sees from the relations of $E$ that $st=s(sh_0+h_0s)=(sh_0+h_0s)s=ts$, and moreover $t^r=(sh_0+h_0s)^r=sv_{0,r}+v_{0,r}s$ and $st^r=s^2v_{0,r}+sv_{0,r}s=v_{n-1,r+1}+sv_{0,r}s$.
							Hence $H^*(E)\cong\kk[s,t]/(s^2)$, as claimed.
						\item Recall (see e.g.~\cite[\S 2.4]{BabenkoTaimanov}) that for cohomology classes $a$, $b$, $c$ with $ab=0$ and $bc=0$, the triple Massey product $\<a,b,c\>$ consists of all cohomology classes represented by $av+(-1)^{|a|+1}uc$ for some $u$, $v$ with $d(u)=ab$, $d(v)=bc$.
							It \emph{vanishes} if $0=av+(-1)^{|a|+1}uc$ for some $u$, $v$.
							
							In $H^*(E)$ we have $s^2=0$, and thus the triple Massey product $\<s,s,s\>$ is defined.
							The preimages of $s^2$ under the differential of $E$ are of the form $h_{n-2}+\lambda s$ with $\lambda\in\kk$.
							We have
							\begin{equation*}
								s(h_{n-2}+\lambda_1s)+(h_{n-2}+\lambda_2)s=sh_{n-2}+(\lambda_1+\lambda_2)s^2+h_{n-2}s\neq 0
							\end{equation*}
							for all $\lambda_1,\lambda_2\in\kk$, so $\<s,s,s\>$ does not vanish.
							It follows that $E$ is not formal since all triple Massey products of a formal dg algebra vanish by \cite[Thm.~4]{BabenkoTaimanov}.\qedhere
					\end{enumerate}
				\end{proof}
				\begin{thm}
					\label{koszuldual}
					Let $A=\kk[x]/(x^{n+1})$ with $n>1$ and $|x|=k$, and let $P_\kk$ denote the graded projective resolution of $\kk$ from \cref{projres}.
					\begin{enumerate}
						\item There is a quasi-isomorphism of $\ints$-graded dg algebras
							\begin{equation*}
								E\to\End_{\C_\dg(\grrMod A)}^*(P_\kk),
							\end{equation*}
						\item $A^{!,\dg}$ is quasi-isomorphic to $\Tot^\Pi(E)$.
					\end{enumerate}
				\end{thm}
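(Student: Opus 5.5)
The plan is to construct an explicit morphism of $\ints$-graded dg algebras $\Phi\: E\to\mathcal{E}=\End_{\C_\dg(\grrMod A)}^*(P_\kk)$ on generators and check that it induces an isomorphism on cohomology. Since $E$ is a quotient of a free algebra, $\Phi$ is determined by the images of $s,h_0,\dots,h_{n-2}$. We must verify that these images satisfy the defining relations of $E$ and that $\Phi$ commutes with the differentials on generators. Compatibility on all of $E$ then follows from the Leibniz rule.

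For the generators, we send $s$ to the chain map $\sigma$ of bidegree $(1,-k)$ representing $s$ in the proof of \cref{extalgebra}. Its components alternate between $1$ and $x^{n-1}$ along $P_\kk$. Each $h_i$ is sent to a homotopy $\eta_i$ of bidegree $(1,(i-n)k)$. Such a map goes from $P_\kk^{-q}$ to $P_\kk^{-q}$, up to internal shift. The compatibility $d(\eta_{n-2})=\sigma^2$ is satisfied by the null-homotopy already exhibited in \cref{extalgebra}. That homotopy has components alternating $0$ and $x^{n-2}$, and we take it as $\eta_{n-2}$. Inductively downward in $i$, we define $\eta_i$ with components alternating $0$ and $x^{i}$, possibly with signs. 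These are to be placed on the same parity of homological positions, and we check $d(\eta_i)=\sigma\eta_{i+1}+\eta_{i+1}\sigma$. Because every component is a monomial multiple of the identity on rank-one free modules $A\<m\>$, each identity reduces to comparing powers of $x$. The parity patterns should do the rest. The relations then become similar monomial checks:
\begin{itemize}
\item $\eta_i\eta_j=0$ because both are supported on the same parity of positions;
\item $\sigma^2\eta_i=\eta_i\sigma^2$ because $\sigma^2$ has constant components $x^{n-1}$, or is periodic in the appropriate sense;
\item $\eta_{i+1}\sigma\eta_j=\eta_i\sigma\eta_{j+1}$ because both sides give $x^{i+j+1}$-type components, and similarly for the last relation.
\end{itemize}
If the naive signs fail, I would adjust them using $(-1)^q$ factors depending on homological degree.

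Next we prove that $\Phi$ is a quasi-isomorphism. By \cref{dgalgcohomology}, $H^*(E)\cong\kk[s,t]/(s^2)$ with $t=sh_0+h_0s$, and by \cref{extalgebra}, $H^*(\mathcal{E})\cong\kk[s,t]/(s^2)$ with the same bidegrees; note that $|sh_0|=(2,-nk-k)$. So it suffices to show that $H^*(\Phi)$ is a surjective algebra map. $H^*(\Phi)$ sends $s$ to the class of $s$, so we need $\Phi(sh_0+h_0s)=\sigma\eta_0+\eta_0\sigma$ to represent $t$ up to a nonzero scalar. Its components should be $x^0=1$ on alternate positions, plus a correction that is homotopically trivial, matching the representative of $t$ in \cref{extalgebra}. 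Then $H^*(\Phi)$ maps the basis $t^r,st^r$ onto the basis of $\Ext^{*,*}$, because $\Ext^{*,*}$ is generated by $s,t$ and its bigraded pieces are at most one-dimensional. This gives an isomorphism. I expect this identification of $\Phi(t)$, together with the sign bookkeeping for the relation $s^2h_i=h_{n-2}sh_{i+1}$, to be the main obstacle. I would first try computing all compositions at the lowest few positions of $P_\kk$ and then use the 2-periodicity of $P_\kk$ to conclude in general.

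For part 2), \cref{projresdiagfin} shows that $P_\kk$ is diagonal-finite because $n>1$. Hence \cref{dgvsgradedkoszul} gives $A^{!,\dg}\cong\Tot^\Pi(\mathcal{E})$. It remains to see that $\Tot^\Pi(\Phi)$ is a quasi-isomorphism. $\Tot^\Pi$ of a quasi-isomorphism need not be one in general, but here both sides are diagonal-finite in a suitable sense. For a fixed total degree $m$, only finitely many bidegrees $(p,q)$ with $p+q=m$ contribute, given the bidegrees $(1,-k)$ and $(1,(i-n)k)$ of generators and $(1,\ast)$ on $\mathcal{E}$. In the case $k=0$ the internal degree is constant, so I would instead argue using the fact that the homological degree is bounded in each weight. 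So $\Tot^\Pi=\Tot^\amalg$, and a direct sum of quasi-isomorphisms of complexes is a quasi-isomorphism. If diagonal-finiteness of $E$ itself is awkward, I would alternatively filter by the first grading and use a bounded-filtration argument on each fixed total degree.
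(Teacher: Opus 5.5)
Your part 1 is essentially the paper's proof. The paper also sends $s$ to the representative from \cref{extalgebra}, and sends $h_i$ to the map with component $x^i$ on $P_\kk^{-q}\to P_\kk^{-q+1}\<(i-n)k\>$ for even $q\geq 2$ and $0$ for odd $q$. It checks the relations and $d(h_{n-2})=s^2$, $d(h_i)=sh_{i+1}+h_{i+1}s$ directly, and concludes via \cref{extalgebra} and \cref{dgalgcohomology}, just as you do. Two small corrections. First, $\eta_i$ raises the homological position by one; it does not map $P_\kk^{-q}$ to $P_\kk^{-q}$. Second, $\sigma\eta_0+\eta_0\sigma$ is exactly the representative of $t$ from \cref{extalgebra}, with component $1$ in every position $q\geq 2$, so no homotopy correction is needed.

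In part 2, the step you use to pass from $\Phi$ to $\Tot^\Pi(\Phi)$ is false for $k=1$. In that case $|s|=(1,-1)$ has total degree $0$. Moreover $s^j\neq 0$ in $E$ for every $j$, since every monomial occurring in the defining relations contains some $h_i$. Hence $E^{j,-j}\neq 0$ for all $j\geq 0$, and $E$ is not diagonal-finite. Your fallback bounded-filtration argument fails for the same reason.

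No finiteness is needed, though. The differential of a $\ints$-graded dg algebra has bidegree $(1,0)$, so $\Tot^\Pi(E)^m=\prod_j E^{m-j,j}$ with the differential acting componentwise. Thus $\Tot^\Pi(E)$ is just the product over $j$ of the complexes $E^{*,j}$, suitably shifted, and likewise for $\mathcal{E}$. Products of complexes of vector spaces commute with cohomology, so $\Tot^\Pi$ of any quasi-isomorphism of $\ints$-graded dg algebras is again a quasi-isomorphism. This is the implicit step behind the paper's ``immediate from 1), \cref{projresdiagfin} and \cref{dgvsgradedkoszul}''. The paper's remark after the theorem also asserts that $E$ is diagonal-finite, which fails at $k=1$ for the same reason, but its proof does not rely on that remark.
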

				\begin{proof}\leavevmode
					\begin{enumerate}
					\item By \cref{extalgebra} we have $H^*(\End_{\C_\dg(\grrMod A)}^*(P_\kk))=\Ext_{\grrMod A}^{*,*}(\kk,\kk)\cong\kk[s,t]/(s^2)$, and the proof provides an explicit representative for the generator $s$.
						For $0\leq i\leq n-2$, define $h_i\in\End_{\C_\dg(\grrMod A)}^*(P_\kk)$ as
						\begin{equation*}
							\begin{tikzcd}[column sep=tiny]
								\dots\arrow{r}
								&A\<-(2n+2)k\>\arrow{r}{x^n}\arrow{d}{x^i}
								&A\<-(n+2)k\>\arrow{r}{x}\arrow{d}{0}
								&A\<-(n+1)k\>\arrow{r}{x^n}\arrow{d}{x^i}
								&A\<-k\>\arrow{r}{x}\arrow{d}{0}
								&A\arrow{d}\\
								\dots\arrow{r}
								&A\<-(2n+2-i)k\>\arrow{r}{x}
								&A\<-(2n+1-i)k\>\arrow{r}{x^n}
								&A\<-(n+1-i)k\>\arrow{r}{x}
								&A\<-(n-i)k\>\arrow{r}
								&0\rlap{.}
							\end{tikzcd}
						\end{equation*}
						Note that the representative of $t$ from \cref{extalgebra} is given by $t=sh_0+h_0s$.
						
						It is easy to see that the morphisms $s$ and $h_i$ satisfy the relations of $E$, and also $d(s)=0$, $d(h_{n-2})=s^2$ and $d(h_i)=h_{i+1}s+sh_{i+1}$ for $i<n-2$.
						Hence sending the generators of $E$ to the morphisms $s$ and $h_i$ defines a morphism of $\ints$-graded dg algebras, and this is a quasi-isomorphism by \cref{extalgebra} and \cref{dgalgcohomology}.
					\item This is immediate from 1), \cref{projresdiagfin} and \cref{dgvsgradedkoszul}.\qedhere
					\end{enumerate}
				\end{proof}
				\begin{rem}
					The $\ints$-graded dg algebra $E$ is diagonal-finite, and thus $\Tot^\Pi(E)=\Tot^\amalg(E)$.
				\end{rem}
			\subsubsection{The case \texorpdfstring{$n=1$, $k\neq 1$}{n=1, k!=1}}
				The overall argument for $n=1$ and $k\neq 1$ is almost the same as for $n>1$, but no extra calculations are required.
				\begin{prop}
					\label{koszuldualn1}
					Let $A=\kk[x]/(x^2)$ with $|x|=k\neq 1$, and let $P_\kk$ denote the graded projective resolution of $\kk$ from \cref{projres}.
					\begin{enumerate}
						\item There is a quasi-isomorphism of $\ints$-graded dg algebras
							\begin{equation*}
								\kk[s]\to\End_{\C_\dg(\grrMod A)}^*(P_\kk),
							\end{equation*}
							with the bigrading $|s|=(1,-k)$ and differential $d(s)=0$.
						\item $A^{!,\dg}$ is quasi-isomorphic to $\kk[s]$ with $|s|=1-k$ and $d(s)=0$.
					\end{enumerate}
				\end{prop}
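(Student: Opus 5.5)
The plan mirrors the proof of \cref{koszuldual}, with the technical algebra $E$ replaced by the free graded algebra $\kk[s]$.

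For part 1), I take the explicit cocycle $s\in\End_{\C_\dg(\grrMod A)}^*(P_\kk)$ of bidegree $(1,-k)$ constructed in the proof of \cref{extalgebra}. For $n=1$ the vertical components $x^{n-1}$ all equal $1$, so $s$ is simply the shift map of the $2$-periodic-looking resolution $\dots\to A\<-2k\>\to A\<-k\>\to A$ with all differentials $x$. Since $s$ is a closed element of the dg algebra $\End^*$, the assignment $s\mapsto s$ determines a unique morphism of $\ints$-graded dg algebras $\kk[s]\to\End_{\C_\dg(\grrMod A)}^*(P_\kk)$; no relations need checking because $\kk[s]$ is free on one closed generator.

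Next I would show this morphism is a quasi-isomorphism. On cohomology, $s^i$ is sent to the class of the $i$-fold composite of the representative. By \cref{extalgebra}, for $n=1$ we have $\Ext_{\grrMod A}^{*,*}(\kk,\kk)\cong\kk[s]$, with $s^2=t$ and $t^j,st^j$ non-null-homotopic. So the classes $s^i$ form a basis of the Ext algebra, each bigraded piece being one-dimensional or zero. Hence the induced map $\kk[s]\to H^*(\End^*)$ is an isomorphism of bigraded algebras. I would double-check that $s^i$ is exactly the $i$-fold composite of the representative, not merely homotopic to it; here the composite is literally the shift by $i$ steps, so this is immediate.

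For part 2), \cref{projresdiagfin} shows $P_\kk$ is diagonal-finite because $k\neq1$. So \cref{dgvsgradedkoszul} gives $A^{!,\dg}\cong\Tot^\Pi(\End_{\C_\dg(\grrMod A)}^*(P_\kk))$. Applying $\Tot^\Pi$ to the quasi-isomorphism of part 1) then gives $A^{!,\dg}\simeq\Tot^\Pi(\kk[s])$, and $\Tot^\Pi(\kk[s])$ is $\kk[s]$ with $|s|=1+(-k)=1-k$ and zero differential.

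The main obstacle is justifying that $\Tot^\Pi$ preserves this quasi-isomorphism. $\Tot^\Pi$ need not preserve quasi-isomorphisms in general, so I would argue via diagonal-finiteness. $\kk[s]$ is diagonal-finite: for $k\neq1$ each total degree $i(1-k)$ determines $i$, and $k=1$ is excluded. The endomorphism complex is diagonal-finite as well. This follows from the degree count in \cref{projresdiagfin}: on each diagonal there are only finitely many nonzero terms, so each total-degree component is a finite direct sum of bigraded pieces. Then $\Tot^\Pi=\Tot^\amalg$ on both sides, cohomology of the total complex is computed degreewise as the finite sum of the bigraded cohomologies, and the quasi-isomorphism passes to total complexes.
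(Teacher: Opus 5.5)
Your proposal is correct and follows the same route as the paper. The explicit closed representative of $s$ from \cref{extalgebra} defines the map $\kk[s]\to\End_{\C_\dg(\grrMod A)}^*(P_\kk)$, which is an isomorphism on cohomology by \cref{extalgebra}, and part 2) then follows from \cref{projresdiagfin} and \cref{dgvsgradedkoszul}. Your extra diagonal-finiteness argument for passing the quasi-isomorphism through $\Tot^\Pi$ is valid but not needed: the differential has bidegree $(1,0)$, so $\Tot^\Pi$ is a product, over the second degree, of shifted complexes, and cohomology commutes with products.
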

				\begin{proof}\leavevmode
					\begin{enumerate}
						\item By \cref{extalgebra} we have $H^*(\End_{\C_\dg(\grrMod A)}^*(P_\kk))\cong\kk[s]$ with $|s|=(1,-k)$.
							The proof of \cref{extalgebra} provides an explicit representative for $s$, which yields a dg algebra quasi-isomorphism $\kk[s]\to\End_{\C_\dg(\grrMod A)}^*(P_\kk)$.
						\item This is immediate from 1), \cref{projresdiagfin} and \cref{dgvsgradedkoszul}.
							Note that $\kk[s]$ with $|s|=(1,-k)$ is diagonal-finite as $k\neq 1$, and thus $\Tot^\Pi(\kk[s])=\Tot^\amalg(\kk[s])=\kk[s]$ with $|s|=1-k$.\qedhere
					\end{enumerate}
				\end{proof}
			\subsubsection{The case \texorpdfstring{$n=1$, $k=1$}{n=1, k=1}}
				If $n=1$ and $k=1$, we cannot apply \cref{dgvsgradedkoszul} since the projective resolution of $\kk$ is not diagonal-finite.
				Hence we have to compute the dg Koszul dual directly.
				\begin{prop}
					\label{koszuldualn1k1}
					Let $A=\kk[x]/(x^2)$ with $|x|=1$.
					Then $A^{!,\dg}$ is quasi-isomorphic to $\kk[\![s]\!]$ with $|s|=0$ and $d(s)=0$.
				\end{prop}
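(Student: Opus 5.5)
We compute $A^{!,\dg}=\End_{\C_\dg(A)}(\Tot^\amalg(P_\kk))$ directly, using \cref{kprojres} to know that $Q=\Tot^\amalg(P_\kk)$ is a K-projective resolution of $\kk$. For $n=1$, $k=1$ the resolution \cref{projres} has $A\<-i\>$ in homological degree $-i$, so all summands land on the same total diagonal. As a graded $A$-module, $Q=\coprod_{i\geq 0} A e_i$ with every generator $e_i$ in total degree $0$, and the differential is $d(e_{i+1})=e_i x$ (up to signs), $d(e_0)=0$. In other words, $Q$ is the dg module $A^{(\mathbb{N})}$ with a ``shift'' differential. Its cohomology is $\kk$ spanned by $e_0$: the degree-$1$ part is $\coprod_i \kk e_ix$, and every $e_ix$ is a boundary.

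First, we use adjunction to compute $\Hom_{\C_\dg(A)}(Q,Q)\cong \Hom_{\C_\dg(A)}(Q,\kk)$ up to quasi-isomorphism, since $Q$ is K-projective and $Q\to\kk$ is a quasi-isomorphism. A dg map $Q\to\kk$ of degree $m$ is determined by its values on the generators $e_i$, which all have degree $0$. Hence $\Hom_{\C_\dg(A)}(Q,\kk)$ is $\prod_{i\geq 0}\kk$ concentrated in degree $0$, and the differential vanishes: the map induced by $d$ sends $\phi$ to $\pm\phi\circ d$, and $d(e_i)\in QA^{>0}$ is killed by $\kk$. So $H^*(A^{!,\dg})\cong\prod_{i\ge0}\kk$ as a graded vector space, in degree $0$.

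Next, we identify the algebra structure. The element $s$ will be the degree-$0$ endomorphism $S\colon e_{i+1}\mapsto e_i$, $e_0\mapsto 0$, which commutes with $d$. Powers $S^j$ correspond under the above isomorphism to the functional picking out $e_j$, so a formal power series $\sum_j a_jS^j$ makes sense as an endomorphism of $Q$: on each $e_i$ only finitely many terms act. This gives a dg algebra map $\kk[\![s]\!]\to\End_{\C_\dg(A)}(Q)$ with trivial differential on the source. Composing with the quasi-isomorphism to $\Hom(Q,\kk)\cong\prod\kk$ sends $\sum a_js^j$ to $(a_j)_j$, which is bijective. Therefore this map is a quasi-isomorphism of dg algebras, since it is a morphism of dg algebras inducing an isomorphism on cohomology.

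The main obstacle is keeping the signs and the correct $\Hom$-complex conventions straight when forming $\Tot^\amalg$. We must check that $S$ genuinely commutes with the total differential, including the Koszul signs coming from the homological shifts. We must also verify carefully that $\Hom(Q,\kk)$ has zero differential, which uses that $xA\subseteq A^{>0}$ acts by zero on $\kk$. If the signs alternate, we replace $S$ by $e_{i+1}\mapsto (-1)^{i}e_i$ or a similar variant; the conclusion is unchanged.
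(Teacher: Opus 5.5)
Your proof is correct, and the map you build is the same as the paper's: $\sum_j a_jS^j$ is exactly the upper triangular Toeplitz matrix written down there. The difference is in how you show that this map is a quasi-isomorphism.

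The paper computes the cohomology of $\End_{\C_\dg(A)}(Q)$ by hand. It observes that this complex lives in degrees $0$ and $1$. It then identifies the degree-$0$ cocycles as the upper triangular Toeplitz matrices over $\kk$, and checks that every degree-$1$ element (a column-finite matrix with entries in $\kk x$) is a coboundary.

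You instead use K-projectivity of $Q$ (\cref{kprojres}) to replace $\End_{\C_\dg(A)}(Q)$ by $\Hom_{\C_\dg(A)}(Q,\kk)$. This is visibly $\prod_{i\geq 0}\kk$ in degree $0$ with zero differential. You then conclude by two-out-of-three, since the composite $\kk[\![s]\!]\to\End_{\C_\dg(A)}(Q)\to\Hom_{\C_\dg(A)}(Q,\kk)$, given by $\sum_j a_js^j\mapsto(a_j)_j$, is bijective.

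What your route buys: you never need the explicit cocycle description, only that your series land in degree-$0$ cocycles. You also skip the degree-$1$ surjectivity check, which is the only fiddly part of the direct computation: one must solve recursively column by column while preserving column-finiteness. What the paper's route buys: it gives a complete explicit picture of the complex $\End_{\C_\dg(A)}(Q)$ itself, rather than only of its cohomology.

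Two small remarks. The comparison with $\Hom_{\C_\dg(A)}(Q,\kk)$ is not an adjunction. It is postcomposition with the augmentation $Q\to\kk$, which is a quasi-isomorphism on Hom complexes because $Q$ is K-projective; you state this reason correctly. Your sign worry is harmless, since rescaling the generators $e_i$ by signs always normalizes the differential to $d(e_{i+1})=e_ix$.
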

				\begin{proof}
					By definition and \cref{kprojres} we have $A^{!,\dg}=\End_{\C_\dg(A)}(\Tot^\amalg(P_\kk))$, where $P_\kk$ denotes the graded projective resolution of $\kk$ from \cref{projres}.
					As a graded $A$-module, $\Tot^\amalg(P_\kk)\cong\coprod_{i\leq 0} A$ where the $i$-th copy of $A$ is the (shifted) $A\<i\>$ from the degree~$i$ part of $P_\kk$.
					Note that morphisms of $A$-modules $\coprod_{i\leq 0} A\to\coprod_{i\leq 0} A$ can be written as matrices with entries in $A$, extending infinitely to the right and down (with rows and columns indexed by $0,-1,-2,\dots$) and only finitely many non-zero entries in each column.
					The composition is then given by the usual matrix multiplication.
					In this notation, the differential of $\Tot^\amalg(P_\kk)$ is given by the infinite Jordan block with entries $x$ above the main diagonal.
					
					The dg algebra $\End_{\C_\dg(A)}(\Tot^\amalg(P_\kk))$ is concentrated in degrees $0$ and $1$.
					Its differential is given by the (graded) commutator with the differential of $\Tot^\amalg(P_\kk)$.
					A straightforward calculation shows that the cocycles of degree~$0$ are those matrices with entries in $A^0=\kk$ that are constant along diagonals and have $0$ below the main diagonal, and that all degree~$1$ elements are coboundaries.
					It follows that a quasi-isomorphism $\kk[\![s]\!]\to\End_{\C_\dg(A)}(\Tot^\amalg(P_\kk))$ is given by sending a power series $\sum_{r\geq 0} a_rs^r$ to the matrix $\begin{smallpmatrix}a_0&a_1&a_2&\dots\\0&a_0&a_1&\sddots\\0&0&a_0&\sddots\\\svdots&\sddots&\sddots&\sddots\end{smallpmatrix}$.
				\end{proof}
				\begin{rem}
					The proof of \cref{dgvsgradedext} fails for $n=1$, $k=1$: in this case the coproduct $\coprod_{p\in\ints} P_\kk[p]\<-p\>$ is not isomorphic to the product $\prod_{p\in\ints} P_\kk[p]\<-p\>$.
					However, we nevertheless have
					\begin{equation*}
						\begin{split}
							H^*(\End_{\C_\dg(A)}(\Tot^\amalg(P_\kk)))&\cong H^*(\Tot^\Pi(\End_{\C_\dg(\grrMod A)}^*(P_\kk)))\\
							&=\Tot^\Pi(\Ext_{\grrMod A}^{*,*}(\kk,\kk)).
						\end{split}
					\end{equation*}
				\end{rem}
		\subsection{The Koszul duality equivalence}
			For a proper non-positive dg algebra $A$, contravariant dg Koszul duality provides an equivalence $\perf(A)^\op\to\Dfd((A^{!,\dg})^\op)$.
			To obtain similar equivalences if $A$ is proper positive, the following property introduced in \cite[Def.~3.23]{FushimiDgKoszul} is required.
			There are several other equivalent characterizations, see \cite[Thm.~3.14]{FushimiDgKoszul}.
			
			Recall that the perfect derived category of a locally finite positive dg algebra $A$ admits a standard \textbf{t}-structure with $\heart=\extclos(A)$ \cite[Thm.~7.1]{KellerNicolas}.
			\begin{dfn}
				A locally finite positive dg algebra $A$ is \emph{pvd-finite} if the heart of the standard \textbf{t}-structure on $\perf(A)$ has an injective cogenerator.
			\end{dfn}
			\begin{lemma}
				\label{pvdfinite}
				Let $n>0$ and $k>0$.
				The dg algebra $A=\kk[x]/(x^{n+1})$ with $|x|=k$ is pvd-finite unless $n=1$ and $k=1$.
			\end{lemma}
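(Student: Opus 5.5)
Since pvd-finiteness is defined as the heart $\heart=\extclos(A)$ of the standard \textbf{t}-structure on $\perf(A)$ having an injective cogenerator, the plan is to read this off from \cref{indecinstdheart}, which lists all indecomposables of $\heart$ for $n>0$, $k>0$. Recall that $\heart$ is a finite-length abelian category whose unique simple object is $P=A$, and that $\Ext^1_\heart(-,-)=\Hom_\cat{T}(-,-[1])$. An injective cogenerator is an injective object $I$ such that every object embeds into a finite direct sum of copies of $I$; for a finite-length category with one simple object $P$ it suffices to find an injective object $I$ with $\Hom_\heart(P,I)\neq 0$, since then the injective hull of $P$ is a summand of $I$ and every object, having essential finite socle $P^{\oplus m}$, embeds into $I^{\oplus m}$. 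I would split into the cases of \cref{indecinstdheart}.

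\emph{Case $k>1$.} Here $\heart=\add(P)$ and $\Ext^1_\heart(P,P)=\Hom_\cat{T}(P,P[1])=0$ because $\End^*_\cat{T}(P)$ vanishes in degree $1$ when $k>1$. Thus $\heart$ is semisimple and $P$ itself is an injective cogenerator.

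\emph{Case $k=1$, $n>1$.} Here $\heart=\add(\{P,X_2\})$ with $X_2$ the non-split extension $P\to X_2\to P\to P[1]$. I claim $X_2$ is an injective cogenerator. The proof of \cref{indecinstdheart} computes $\Hom_\cat{T}(P,X_2[1])=0$ for $n>1$ (only $i=0$ and $i=n$ are non-zero), i.e.\ $\Ext^1_\heart(P,X_2)=0$. Since every object of $\heart$ is a finite iterated extension of $P$, dévissage along its composition series gives $\Ext^1_\heart(Y,X_2)=0$ for all $Y\in\heart$, so $X_2$ is injective. It contains $P$ as a subobject, so by the reduction above it is an injective cogenerator. (Alternatively, $\heart$ is equivalent to $\rmodfd\kk[y]/(y^2)$, whose injective cogenerator is the regular module.)

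\emph{Exclusion $n=k=1$.} This case is excluded in the statement, and it should genuinely fail: there $\heart$ consists of the uniserials $X_i$ of unbounded length, so it is equivalent to finite-dimensional nilpotent $\kk[\![y]\!]$-modules, which has no injective objects of finite length. No argument is needed for the lemma itself, but it is a useful sanity check.

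The main obstacle is only the injectivity of $X_2$, i.e.\ the vanishing $\Hom_\cat{T}(P,X_2[1])=0$. I would make sure this holds at $i=1$ by rechecking the long exact sequence: the connecting map $\Hom(P,P[0])\to\Hom(P,P[1])$ (multiplication by $x$) is an isomorphism, and $\Hom(P,P[1])\to\Hom(P,P[2])$ is injective since $x\cdot x=x^2\neq0$ for $n\ge2$. Both are exactly where $n>1$ enters.
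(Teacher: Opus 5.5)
Your proposal is correct and follows the same route as the paper. You read off the heart from \cref{indecinstdheart}; for $k>1$ it is semisimple with $P$ an injective cogenerator, and for $k=1$, $n>1$ you take $X_2$. You also supply details the paper leaves implicit: injectivity of $X_2$ via $\Hom_\cat{T}(P,X_2[1])=0$ plus d\'evissage, and the socle argument showing that an injective object containing the unique simple $P$ is a cogenerator.
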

			\begin{proof}
				This follows directly from (the proof of) \cref{indecinstdheart}:
				for $k>1$, the heart of the standard \textbf{t}-structure is semisimple, and hence the unique simple object $A\in\heart$ is an injective cogenerator.
				For $k=1$ and $n>1$ we have $\heart=\add(\{A,X_2\})$ where $X_2$ is a length~$2$ extension of $A$ with itself, and $X_2$ is an injective cogenerator.
				For $k=1$ and $n=1$ we have $\heart=\add(\{X_i\mid i\geq 1\})$ where $X_i$ is uniserial with $i$ many simple subquotients $A$, and none of the $X_i$ is injective.
			\end{proof}
			It follows that dg Koszul duality yields an alternative model of the triangulated category generated by a $\proj^n[k]$-object.
			For $n=1$ this is stated implicitly in \cite[\S 2]{KellerYangZhou}, cf.~also \cite[\S 1.1]{HolmJorgensenYang}.
			\begin{cor}
				\label{koszulmodel}
				Let $A=\kk[x]/(x^{n+1})$ with $|x|=k\in\ints$, and assume that $n>0$.
				\begin{enumerate}
					\item If $n\neq 1$ or $k\neq 1$, then $\perf(A)\cong\Dfd((A^{!,\dg})^\op)$.
						Moreover $\Dfd((A^{!,\dg})^\op)$ is generated by the $\proj^n[k]$-object $\kk$ with trivial action of $(A^{!,\dg})^\op$.
					\item If $n=1$ and $k=1$, then $\perf(A)\cong\thick(\kk[\![t]\!]/(t))\subseteq\D(\kk[\![t]\!])$ and moreover $\perf(A)\cong\thick(\kk[t]/(t))\subseteq\Dfd(\kk[t])$.
						The dg modules $\kk[\![t]\!]/(t)$ and $\kk[t]/(t)$ are $\proj^1[1]$-objects in the thick subcategories of $\D(\kk[\![t]])$ respectively $\D(\kk[t])$ that they generate.
				\end{enumerate}
			\end{cor}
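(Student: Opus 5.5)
For part 1), I plan to use Fushimi's dg Koszul duality in the form that applies to the relevant sign of $k$. If $k\le 0$, then $A$ is proper and non-positive. Contravariant Koszul duality therefore gives an equivalence $\perf(A)^\op\to\Dfd((A^{!,\dg})^\op)$ sending $A$ to $\kk$. To get a covariant statement, I compose with the $\kk$-linear duality $\perf(A)\simeq\perf(A^\op)^\op$, using that $A$ is commutative, so $A^\op\cong A$. This makes $\Dfd((A^{!,\dg})^\op)$ equivalent to $\perf(A)$.

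If $k>0$, then $A$ is proper and positive, hence locally finite. By \cref{pvdfinite} it is pvd-finite unless $n=k=1$. I then apply \cite[Thm.~3.14]{FushimiDgKoszul}, or its corollary, to get $\perf(A)\simeq\Dfd((A^{!,\dg})^\op)$. In both cases the equivalence is $\RHom_A(-,\kk)$ or its variant, and it identifies the generator $A$ with the trivial module $\kk$. Since $A$ is a $\proj^n[k]$-object in $\perf(A)$ by \cref{plikecy}, the image $\kk$ is a $\proj^n[k]$-object and generates $\Dfd((A^{!,\dg})^\op)$. The main thing to check in this part is that Fushimi's theorem states the equivalence in exactly this form, and in particular which side carries the $\op$.

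For part 2), with $n=k=1$, I would not use Fushimi's theorem. Instead I use the general Keller-type statement that $\thick_{\D(A)}(\kk)\simeq\perf(A^{!,\dg})^{\op}$ via $\RHom_A(-,\kk)$, applied in the opposite direction. Set $B=\kk[\![t]\!]$ with $|t|=0$. By \cref{koszuldualn1k1}, $B=A^{!,\dg}$, and $B$ is commutative. Consider the object $\kk=B/(t)$ in $\D(B)$. Its derived endomorphism algebra is computed from the Koszul resolution $B\xrightarrow{t}B$, which gives $\REnd_B(\kk)\simeq\kk[x]/(x^2)$ with $|x|=1$. By \cref{intrinsicformality} this dg algebra is formal, so \cref{moritamodel} applies: since $\D(B)$ is algebraic and idempotent complete, $\thick(\kk)\simeq\perf(A)$.

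The same argument works for $\kk[t]$ with $|t|=0$ in place of $\kk[\![t]\!]$, since $\kk[t]/(t)$ also has the resolution $\kk[t]\xrightarrow{t}\kk[t]$. Thick closure lands in $\Dfd(\kk[t])$ because $\kk$ has finite-dimensional cohomology. Finally, \cref{plikecy} shows that these generators are $\proj^1[1]$-objects in the thick subcategories they generate.

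The main obstacle is the bookkeeping of which Koszul-duality theorem applies in part 1), together with the direction of the $\op$. The computations in part 2) are routine.
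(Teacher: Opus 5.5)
Your proposal is correct and follows the paper's proof. Part 1) is Fushimi's contravariant Koszul duality $\RHom_A(-,\kk)$ combined with \cref{pvdfinite} and the identification $\perf(A)^\op\cong\perf(A^\op)\cong\perf(A)$; note that $\kk$-linear duality preserves perfectness here only because $DA\cong A[nk]$, which you should state. Part 2) is exactly the paper's argument: check $\proj^1[1]$-likeness via the resolution $B\xrightarrow{t}B$, then invoke \cref{moritamodel} and \cref{plikecy}.

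Two bookkeeping points. The paper cites \cite[Thm.~4.4]{FushimiDgKoszul}, which covers both the locally finite non-positive and the locally finite positive pvd-finite cases at once; Thm.~3.14 there is the list of characterizations of pvd-finiteness, not the duality theorem. The Keller-type remark at the start of your part 2) is not needed.
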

			\begin{proof}\leavevmode
				\begin{enumerate}
					\item By \cite[Thm.~4.4]{FushimiDgKoszul}, the functor
						\begin{equation*}
							\RHom_A(-,\kk)\: \perf(A)^\op\to\Dfd((A^{!,\dg})^\op)
						\end{equation*}
						is an equivalence if $A$ is either locally finite non-positive, or locally finite positive and pvd-finite.
						By \cref{pvdfinite}, for $n>0$ and $k>0$ (i.e.~for $A$ positive) $A$ is pvd-finite unless $n=1$ and $k=1$.
						Since $A$ is proper, $DA\cong A[nk]$ as dg $A$-module, and $A$ is commutative, we have $\perf(A)^\op\cong\perf(A^\op)\cong\perf(A)$.
						The Koszul duality functor takes $A$ to $\kk$, and thus $\kk$ is a $\proj^n[k]$-object generating $\Dfd((A^{!,\dg})^\op)$.
					\item It is easy to see (by computing their projective resolutions) that $\kk[\![t]\!]/(t)$ in $\D(\kk[\![t]\!])$ and $\kk[t]/(t)$ in $\Dfd(\kk[t])$ are $\proj^1[1]$-like objects.
						Hence the subcategories $\thick(\kk[\![t]\!]/(t))\subseteq\D(\kk[\![t]\!])$ and $\thick(\kk[t]/(t))\subseteq\Dfd(\kk[t])$ are triangulated categories generated by a $\proj^1[1]$-like object, and thus equivalent to $\perf(A)$ by \cref{moritamodel}.
						A $\proj$-like object is a $\proj$-object in the thick subcategory that it generates by \cref{plikecy}.\qedhere
				\end{enumerate}
			\end{proof}
	\printbibliography
\end{document}